\documentclass{article}
\usepackage[english]{babel}
\usepackage{amsfonts}
\usepackage{amsmath,amsthm,amssymb,latexsym,mathabx}
\usepackage{color}
\usepackage{authblk}
\usepackage{graphicx}
\usepackage{tikz}
\usepackage{stmaryrd}
\usepackage{mathrsfs,upgreek}
\usepackage{eucal}
\usepackage{enumerate}
\usepackage{changes}

\newtheorem{theorem}{Theorem}[section]
\newtheorem{lemma}[theorem]{Lemma}
\newtheorem{proposition}[theorem]{Proposition}
\newtheorem{corollary}[theorem]{Corollary}
\theoremstyle{definition}

\newtheorem{remark}[theorem]{Remark}

\newcommand{\op}[1]{\textrm{\upshape #1}}

\newcommand{\join}{\vee}

\newcommand{\meet}{\wedge}

\newcommand{\la}{\langle}
\newcommand{\ra}{\rangle}
\newcommand{\alg}[1]{{\textbf{\upshape #1}}}  %
\newcommand{\vv}[1]{\mathsf {#1}}

\renewcommand{\a}{\alpha}

\newcommand{\f}{\varphi}
\newcommand{\g}{\gamma}

\renewcommand{\th}{\theta}

\newcommand{\sse}{\subseteq}

\newcommand{\app}{\approx}
\newcommand{\HH}{{\mathbf H}}  
\newcommand{\SU}{{\mathbf S}} 
\newcommand{\PP}{{\mathbf P}}   
\newcommand{\VV}{{\mathbf V}}   

\newcommand{\itemb}{\item[$\bullet$]}

\newcommand{\ib}{\item[$\bullet$]}

\newcommand{\Con}[1]{\operatorname{Con}(\alg #1)}

\newcommand{\vuc}[2]{#1_1,\dots,#1_{#2}}

\newcommand{\imp}{\rightarrow}

\newcommand{\darrow}{\mathop{\downarrow}}
\makeatletter
\def\square{\RIfM@\bgroup\else$\bgroup\aftergroup$\fi
  \vcenter{\hrule\hbox{\vrule\@height.6em\kern.6em\vrule}\hrule}\egroup}
\makeatother
\newcommand{\cc}[1]{\mathsf{#1}}
\newcommand{\simA}{\mathord{\sim}}

\newcommand{\smlcirc}{\raise3pt\hbox{\textrm{\circle{3.3}}}}

\newcommand{\escl}{\text{\large$\mathop{!}$\normalsize}}

\newcommand{\myfrac}[2]{\dfrac{#1}{\lower.5ex\hbox{$#2$}}}
\mathchardef\hu="0362

\begin{document}

\title{Linear Logic via algebra I: Girard algebras}

\author{Paolo Aglian\`{o}\\
  DIISM, Universit\`a di Siena, Italy\\
agliano@live.com
\and
Mitar Nedeljkovic\\
University of Priština in Kosovska Mitrovica \\
mitar.nedeljkovic@gmail.com
}
\date{}
\maketitle

\begin{abstract} In this paper we  develop  several aspects of the algebraic theory of Girard algebras, regarded as the equivalent algebraic semantics of {\em multiplicative additive Linear Logic} $\cc{MALL}$. The main structural results concern the bottom of the subvariety lattice of the variety of Girard algebras: varieties generated by simple Girard algebras generated by the constants correspond exactly to the atoms of the lattice. Moreover the lattice itself is atomic, i.e.  every nontrivial subvariety contains an atom. These atoms can be represented uniformly by the maximal filters of the free algebra generated by the constants. Moreover every congruence of this free algebra is realized by a subvariety, and the resulting atom-support topology records the behavior of arbitrary joins: the atoms below a join of atoms are precisely the points in the closure of the corresponding subset of the spectrum.

The concrete constructions show that the bottom of the lattice is already quite rich. Besides determining the integral and degenerate boundary cases and exhibiting finite modular and nonmodular atoms and chains of covers, we constructed infinite families of atoms whose topology reflects genuinely infinitary behavior; thus finite and infinite joins behave essentially differently at the bottom of the subvariety lattice. On the logical side, these results yield $2^{\aleph_0}$ Post-complete extensions of $\cc{MALL}$.
\end{abstract}

\section{Introduction}

When one wishes to investigate  a logic  through its models, there are two main alternatives.
The first is to consider the Lindenbaum algebra of the logic and derive logical properties through algebraic
means. The other is to look at a {\em relational semantics} possibly with some duality involved.
After a promising start, whose crown achievement is probably the work of Halmos on
{\em monadic algebras} \cite{Halmos1956}, the first approach took a backseat to the second.

This was mainly due to the great success of Kripke in the sixties with his  relational semantics for
modal and intuitionistic logic that was a source of inspiration for many
researches based on his methods.  However in the seventies the algebraic approach found a new life, due mainly to the work of Blok and Pigozzi.
W. Blok, in
his Ph.D. thesis \cite{Blok1976},  conducted an
in-depth study of Lewis' modal logic S4, using {\em interior algebras} and in \cite{Blok1980} he
studied the entire {\em lattice of modal logics} by purely algebraic
means. Next Blok and Pigozzi investigated thoroughly the matter of
algebraizability of logics \cite{BlokPigozzi1989,BlokPigozzi1992}, setting the foundations
for what is now called  {\em abstract
algebraic logic}.

Their main success   was the
identification  of the ``right'' concept of {\em algebraizable logic}.
To make a long story short, a logic $\cc L$ is algebraizable if there is a class $\vv
K$
 of algebras  which is to $\cc L$ what the variety of Boolean algebras is to
classical propositional calculus.  The class $\vv K$ is called the
{\em equivalent algebraic semantics}  of $\cc L$.  This connection, dubbed the {\em Blok-Pigozzi connection}, allows one to prove many {\em bridge theorems},
showing the equivalence of logical and algebraic properties; in other words one can investigate a logic by purely algebraic means.

In the middle 90's the first author wrote a note applying this machinery to  two ``new'' logics:
{\em multiplicative additive Linear Logic} $\cc{ MALL}$ and
{\em Linear Logic} $\cc{LL}$ \cite{Girard1987}. Both turned out to be algebraizable with equivalent variety semantics: for $\cc{MALL}$ the equivalent algebraic semantics is the variety of {\em Girard algebras}
while for $\cc{LL}$ is the variety of Girales. Both are varieties of bounded $\mathsf{FL}_e$-algebras with constants and/or operations added.  The note was never published, but still attracted some attention, being an early attempt to tackle Linear Logic via algebra (another notable contribution, with a slightly different focus, is  a long draft (titled Semantical
investigations of Linear Logic) by A. Ursini, also unpublished\dots).

After about thirty years the first author, pressured by several colleagues, decided to revise the paper for publication (mostly as a historical note of the early developments of abstract algebraic logic); the paper appeared in
2025  \cite{Agliano2025}.  In 2026 the second author pointed out several errors in that paper. Although these errors do not affect its main algebraizability results, some of the arguments required a correction. While working on these corrections, the authors were led to perform a broader investigation of the algebraic models of Linear Logic. In this note we  present the resulting corrections and developments for Girard algebras, leaving the part about Girales for a subsequent paper.
Our treatment follows mostly the standard notation of universal algebra; for all unexplained concepts we refer the reader to \cite{BurrisSanka}.

\section{Preliminaries}\label{sec:preliminaries}

A {\bf commutative residuated lattice} is an algebra $\alg A=\la A,\join,\meet,\cdot,\imp, 1\ra$ where
\begin{enumerate}
\item $\la A,\join,\meet\ra$ is a lattice;
\item $\la A,\cdot,1\ra$ is a commutative monoid;
\item $\cdot$ and $\imp$ form a residuated pair w.r.t. the lattice ordering: for all $a,b,c \in A$
$$
a\cdot b \le c \qquad\text{if and only if}\qquad a \le b \imp c.
$$
\end{enumerate}
An $\mathsf{FL}_e$-algebra is an algebra $\alg A = \la A,\join,\meet,\cdot,\imp, 0,1\ra$ where
\begin{enumerate}
\item $\la A,\join,\meet,\cdot,\imp, 1\ra$ is a commutative residuated lattice;
\item $0$ is a constant operation.
\end{enumerate}
A commutative residuated lattice (or an $\mathsf{FL}_e$-algebra) is {\bf integral} if $1$ is the largest element in the ordering; an $\mathsf{FL}_e$-algebra
is {\bf zero-bounded} if $0$ is the smallest element. An $\mathsf{FL}_{ew}$-algebra is an $\mathsf{FL}_e$-algebra which is integral and zero-bounded.

On any  $\mathsf{FL}_e$-algebra one can define a unary derived operation $\simA x = x \imp 0$; an $\mathsf{FL}_e$-algebra $\alg A$  is {\bf involutive} if
$\simA\simA a =a$ for all $a \in A$.

A {\bf Girard algebra} is an algebra $\alg A =\la A,\join,\meet,\cdot,\imp, 0,1,\top,\bot\ra$ where
\begin{enumerate}
\item $\la A,\join,\meet,\cdot,\imp, 0,1\ra$ is an involutive $\mathsf{FL}_e$-algebra;
\item for all $a \in A$, $\bot \le a\le \top$.
\end{enumerate}

A {\bf girale}\footnote{note that our naming conventions are slightly different than the ones in \cite{Agliano2025} \dots} is an algebra $\alg A=\la A,\join,\meet,\cdot,\imp, 0,1,\top,\bot,\escl\ra$ where
\begin{enumerate}
\item $\la A,\join,\meet,\cdot,\imp, 0,1,\top,\bot\ra$ is a Girard algebra;
\item $\escl$ is unary and for all $a,b \in A$
\begin{align*}
&\escl 1 = 1 \tag{G1}\\
&\escl a \le a \meet 1 \tag{G2}\\
&\escl a  \escl b = \escl(a \meet b) \tag{G3}\\
&\escl\escl a = \escl a. \tag{G4}
\end{align*}
\end{enumerate}

Girard algebras and girales form varieties of algebras which we denote by $\mathsf{GA}$ and $\mathsf{Gi}$ respectively. Since Girard algebras are commutative residuated lattices augmented by constants, their structure theory is the same as that of commutative residuated lattices.
Commutative residuated lattices are congruence permutable and congruence point-regular at $1$, and also congruence distributive (since they have lattice operations). It follows that they are {\em ideal determined} \cite{GummUrsini1984} and have a good theory of ideals in the sense of \cite{AglianoUrsini1992}. The role of the ideals is played by the  filters: a {\bf filter} $F$ of $\alg A$ is a subset of $A$ that is a lattice filter containing $1$ and is closed under multiplication.
Filters  form an algebraic lattice; in particular, if $A^+ = \{a: a \ge 1\}$, then for any $\th \in \Con A$,
$$
A^+/\th = \bigcup\{a/\th: a \ge 1\}
$$
is a  filter. Moreover the two mappings
$$
\th \longmapsto A^+/\th \qquad\qquad F \longmapsto \th_F = \{(a,b): a\imp b, b\imp a \in F\}
$$
are mutually inverse order preserving maps from  $\Con A$ to the filter lattice of $\alg A$, that we shall write as $\op{Fil}(\alg A)$. Thus  $\Con A$ and $\op{Fil}(\alg A)$ are  isomorphic as lattices. Notice that $A^+/\th$ defines the same set as $\uparrow\![1]_{\theta}$, used for example in \cite{GJKO}. All these considerations apply to Girard algebras as well.

Girales are  ideal-determined as well, so their congruences are totally determined by the ideals. Now the operation
$\escl$ is a {\em compatible operation} in the sense of \cite{Agliano1996b}; this in turn implies that the ideals of a girale $\alg A$ are just the filters of its underlying Girard algebra, that are closed under $\escl$.
We will call these subsets {\em filters} as well and let the context make clear which meaning is intended. Again if $\alg A$ is a girale its congruence lattice is isomorphic with the lattice of filters.
For a girale $\alg A$ we define $\escl A= \{\escl a: a \in A\} = \{a \in A: \escl a =a\}$.

\begin{theorem} (Theorem 8.5 in \cite{Agliano2025})\footnote{The statement of Theorem 8.5 in \cite{Agliano2025} contains a typo that may obscure its intended meaning.} For any girale $\alg A$, $\escl \alg A = \la \escl A, \join, \meet_{!},\imp_{!},\bot,1\ra$ is a Heyting algebra, where for $u,v \in \escl A$
$$
u \meet_{!} v = \escl(u \meet v)\qquad  u \imp_{!}v = \escl (u \imp v).
$$
Moreover $\Con A \cong \op{Con}(\escl \alg A)$.
\end{theorem}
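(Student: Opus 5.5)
The plan is to derive a few elementary properties of $\escl$ from (G1)--(G4), read off the Heyting structure directly from them, and then compare filters on both sides, using the correspondence between congruences and $\escl$-closed filters recalled above.

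\emph{Step 1: basic properties of $\escl$.} First, $\escl$ is monotone: if $a\le b$ then by (G3) and (G2)
$$\escl a=\escl(a\meet b)=\escl a\,\escl b\le \escl b\cdot 1=\escl b,$$
where $\escl b\le 1$ and monotonicity of multiplication are used. Every $u\in\escl A$ satisfies $u\le 1$ and is idempotent, since $uu=\escl(u\meet u)=u$. For $u,v\in\escl A$ we have $uv=\escl(u\meet v)=u\meet_! v$ by (G3). In particular $\escl A$ is closed under $\cdot$, and $uv\le u\meet v$.

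\emph{Step 2: $\escl\alg A$ is a Heyting algebra.} I will check the structure piece by piece.
\begin{enumerate}
\item $u\meet_! v$ is the greatest lower bound of $u,v$ in $\escl A$. It lies below both by (G2). If $w\in\escl A$ and $w\le u,v$, then $w=ww\le uv=u\meet_! v$.
\item $\escl A$ is closed under the lattice join, which is the key point. By monotonicity $u=\escl u\le\escl(u\join v)$, and similarly for $v$, so $u\join v\le \escl(u\join v)\le u\join v$. Hence $u\join v$ is also the join in $\escl A$.
\item The bounds are $1=\escl 1$ by (G1) and $\bot=\escl\bot$, the latter because $\escl\bot\le\bot$ by (G2).
\item Residuation: for $u,v,w\in\escl A$,
$$w\le \escl(u\imp v)\iff w\le u\imp v\iff wu\le v\iff w\meet_! u\le v.$$
The forward direction of the first equivalence uses (G2). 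The backward direction uses monotonicity and $w=\escl w$.
\end{enumerate}
A bounded lattice with a relative pseudocomplement is a Heyting algebra, and distributivity then follows automatically.

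\emph{Step 3: the congruence isomorphism.} By the preliminaries, $\Con A$ is isomorphic to the lattice of $\escl$-closed filters of $\alg A$. Congruences of a Heyting algebra correspond to its lattice filters. So it suffices to give an order isomorphism between these two filter lattices.
\begin{enumerate}
\item Send an $\escl$-closed filter $F$ to $F\cap\escl A$. This is a lattice filter of $\escl\alg A$: it contains $1$, and it is closed under $\meet_!$ because $u\meet_! v=uv$. It is upward closed in $\escl A$ because $F$ is.
\item Conversely, send a filter $G$ of $\escl\alg A$ to ${\uparrow}G=\{a: a\ge g \text{ for some } g\in G\}$. This set contains $1$ and is upward closed. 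It is closed under $\cdot$ and $\meet$, since $ab$ and $a\meet b$ both lie above $gh=g\meet_! h\in G$. It is $\escl$-closed, since $a\ge g$ gives $\escl a\ge\escl g=g$.
\item The two maps are mutually inverse. We have ${\uparrow}G\cap\escl A=G$, because $G$ is upward closed in $\escl A$. We have ${\uparrow}(F\cap\escl A)=F$, because $a\in F$ gives $\escl a\in F\cap\escl A$ with $\escl a\le a$.
\end{enumerate}
Both maps are clearly order preserving, so they form a lattice isomorphism, and composing with the correspondences above yields $\Con A\cong\op{Con}(\escl\alg A)$.

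The main obstacle is Step 1 together with item 2 of Step 2: monotonicity of $\escl$ and closure of $\escl A$ under the ambient join. Everything else is routine once these are established.
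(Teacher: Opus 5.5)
Your proof is correct and complete. The paper gives no proof of this statement: it quotes the result from \cite{Agliano2025}. The only ingredient the paper supplies is the remark in the preliminaries that congruences of a girale correspond to the $\escl$-closed filters of its Girard reduct, and that is exactly what you invoke in Step~3. So there is no internal argument to measure yours against; what you have is a self-contained verification that fits the paper's framework.

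Its main virtue is that everything is explicit. The single observation $u\meet_! v=uv$ on $\escl A$ does most of the work: it yields the meet and the residuation law in Step~2, and the closure of $F\cap\escl A$ under $\meet_!$ in Step~3. The lattice isomorphism is given concretely by $F\mapsto F\cap\escl A$ with inverse $G\mapsto{\uparrow}G$. Your argument also confirms that the corrected signature works as stated, with bottom $\bot$ (since $\escl\bot=\bot$) and top $1$.

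One small slip: in the monotonicity computation, the step $\escl a\,\escl b\le\escl b\cdot 1$ uses $\escl a\le 1$, not $\escl b\le 1$. Both follow from (G2), so nothing breaks.
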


Two of the main results in \cite{Agliano2025} show that Girard algebras and girales are the equivalent algebraic semantics of two interesting logics. These two logics are {\em Linear Logic} $\cc{LL}$, introduced by Girard in \cite{Girard1987} and its fragment {\em multiplicative-additive Linear Logic} $\cc{MALL}$. We will not recall their definitions here; a Hilbert-style presentation of both can be found in \cite{Agliano2025}.

\begin{theorem} \cite{Agliano2025} Both $\cc{LL}$ and $\cc{MALL}$ are strongly algebraizable with equivalent variety semantics girales and Girard algebras, respectively.
\end{theorem}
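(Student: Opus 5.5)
The plan is to apply the Blok--Pigozzi criterion for algebraizability. Recall it: a logic $\cc L$ is algebraizable with equivalent algebraic semantics $\vv K$ when there are two translations. The first is a set of equations $\tau(p)$, for which we choose $\tau(p) = \{1 \meet p \app 1\}$, i.e.\ $1\le p$. The second is a set of equivalence formulas $\Delta(p,q)$, for which we choose $\Delta(p,q) = \{p\imp q,\ q\imp p\}$. These must satisfy four conditions:
\begin{enumerate}
\item $\cc L$ proves $\Delta(p,p)$;
\item modus ponens for $\Delta$ holds;
\item $\Delta$ is compatible with every connective;
\item $p \dashv\vdash \Delta(p, 1\meet p \,;\, 1)$, or more precisely $p \dashv\vdash \{ (1\meet p)\imp 1,\ 1\imp (1\meet p)\}$.
\end{enumerate}
It then remains to show that the class so determined is exactly $\mathsf{GA}$, resp.\ $\mathsf{Gi}$. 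Since $\mathsf{GA}$ and $\mathsf{Gi}$ are varieties, this gives algebraizability with equivalent variety semantics. Strong algebraizability, in the sense of a finitely presented logic whose equivalent quasivariety is a variety, follows because $\Delta$ is finite and the quasivariety turns out to be equational.

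First I verify the syntactic conditions in the Hilbert presentation of $\cc{MALL}$ from \cite{Agliano2025}. Reflexivity $\vdash p\imp p$ is an axiom or derived in the usual way. Modus ponens is a rule. For compatibility with $\join,\meet,\cdot,\imp$ I derive the monotonicity and antitonicity laws. Examples are $p\imp q \vdash (p\cdot r)\imp(q\cdot r)$, using the axioms expressing that $\cdot$ is residuated by $\imp$, and $p\imp q \vdash (q\imp r)\imp(p\imp r)$, by the transitivity axiom and permutation. Condition (4) says $p$ is interderivable with $1\imp(1\meet p)$. One direction uses the rule $p\vdash 1\imp p$ (the rule for $1$) together with $1\imp 1$ and $\meet$-introduction. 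The other uses $(1\imp x)\imp x$ and $\meet$-elimination. For $\cc{LL}$ I must additionally check compatibility with $\escl$. This is where the promotion rule $p\vdash \escl p$ and the axiom $\escl(p\imp q)\imp(\escl p\imp \escl q)$ are needed: from $p\imp q$ derive $\escl(p\imp q)$ and then $\escl p\imp\escl q$.

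Next I identify the equivalent quasivariety $\vv K$. Its axioms are the $\tau$-translations of the axioms of $\cc L$, the translations of the rules (e.g.\ $1\le x,\ 1\le x\imp y \Rightarrow 1\le y$), and the equations $x\app y$ whenever $1\le x\imp y$ and $1\le y\imp x$. I show that every Girard algebra (girale) satisfies these; this is a routine soundness check using residuation, with $\escl$ handled via (G1)--(G4). Conversely, I must show that every algebra in $\vv K$ satisfies the Girard algebra (girale) equations. Residuation is the key point. I derive $x\cdot y\le z \iff x\le y\imp z$ from the translated axioms, where the order is defined by $x\le y$ iff $1\le x\imp y$, and I use antisymmetry to get that this is a lattice order agreeing with $\join,\meet$. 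For involution, the translation of the axiom $\simA\simA p\imp p$, together with $p\imp\simA\simA p$, gives $\simA\simA x\app x$. The bounds $\top,\bot$ come from the axioms $p\imp\top$ and $\bot\imp p$, and (G1)--(G4) come from the exponential axioms.

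Finally, the quasi-equations translating the rules must be shown to hold in all Girard algebras (girales). Once that is done, $\vv K$ is axiomatized by equations alone and so is a variety; this yields the equivalent variety semantics. The main obstacle I expect is the $\cc{LL}$ case, specifically the promotion rule $p \vdash \escl p$. Its translation is the quasi-equation $1\le x\Rightarrow 1\le \escl x$, and I must derive it in $\mathsf{Gi}$. This goes as follows: if $1\le x$ then $1\meet x = 1$, and by (G1) and monotonicity of $\escl$ we get $1=\escl 1 \le \escl x$. Monotonicity of $\escl$ itself is obtained from (G3), since $x\le y$ gives $\escl x = \escl(x\meet y)=\escl x\escl y\le \escl y$, using (G2) for $\escl x\le 1$. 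Matching the exact Hilbert axioms for $\escl$ to (G1)--(G4) is the delicate bookkeeping step.
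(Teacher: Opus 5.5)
The paper does not reprove this statement; it imports it from \cite{Agliano2025}. Your outline is the standard Blok--Pigozzi argument for such a result: with $\tau(p)=\{1\meet p\app 1\}$ and $\Delta(p,q)=\{p\imp q,\ q\imp p\}$ you check reflexivity, detachment, replacement (for $\escl$ via promotion and $\escl(p\imp q)\imp(\escl p\imp\escl q)$) and $p\dashv\vdash\Delta(\tau(p))$, and then identify the quasivariety axiomatized by the translated axioms and rules, together with $\tau(\Delta(x,y))\Rightarrow x\app y$, with $\mathsf{GA}$, resp.\ $\mathsf{Gi}$. The plan is correct, including your derivation of $1\le x\Rightarrow 1\le\escl x$ from monotonicity of $\escl$ via (G1)--(G3); what remains is only the bookkeeping against the specific Hilbert axioms of \cite{Agliano2025}, as you yourself note.
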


The algebrization machinery in the Blok-Pigozzi style \cite{BlokPigozzi1989} allows the formulation of many {\em bridge theorems}: a logical property of a logic is reflected by an algebraic property of its equivalent algebraic semantics and vice versa. For instance if $\cc{L}$ is a strongly algebraizable logic and $\vv V$ is its equivalent variety semantics, then the lattice of subvarieties of $\vv V$ and the lattice of axiomatic extensions of $\cc{L}$ are dually isomorphic.
Similarly if $\vv V$ has {\em equationally definable principal congruences} \cite{EDPC1}, this is equivalent to saying that $\cc{L}$ has some form of {\em deduction-detachment theorem}. In \cite{Agliano2025} (Corollary 8.5) it has been shown that this happens for the variety of girales. Girard algebras do not have equationally definable principal congruences in general, but again the cases in which they do have it, can be deduced easily from the general theory of residuated lattices (see for instance \cite{Agliano1996b}). In this paper we continue the study of the algebraic properties of Girard algebras, pointing out their connections with the associated logic.

\section{Girard algebras}

\subsection{Fixing a mistake}

In \cite{Agliano2025} it was claimed that every Girard algebra is embeddable in a complete Girard algebra and this was used to show that
the variety of Girard algebras is precisely the class of subreducts (to the Girard algebra type) of girales. On the logical side this means that
$\cc{LL}$ is a conservative extension of $\cc{MALL}$.

Unfortunately the proof of the claim is incorrect: the operator that is introduced is not a closure operator at all\footnote{Indeed, $Q(\{\bot\})=\{b:\simA b\not\ge\bot\}=\varnothing$, since $\bot$ is least. Thus $Q$ is not extensive.} and the proof collapses. Luckily the claim is true nevertheless, though it requires a little more work.  Given a set $A$ and a binary relation $R$ on $A$ one can define, for $X,Y \sse A$
\begin{align*}
&X^\rhd = \{b: \text{such that}\ a\,R\, b\ \text{for all $a \in X$}\}\\
&Y^\lhd = \{a: \text{such that}\  a\,R\, b\ \text{for all $b \in Y$}\}.
\end{align*}

The proof of the following lemma is folklore.

\begin{lemma}\label{basic} Let $A$ be any set and $R$ a binary relation on $A$. Then
\begin{enumerate}
\item $\g:\mathcal P(A) \longrightarrow \mathcal P(A)$ defined by $\g(X) = X^{\rhd\lhd}$ is a closure operator;
\item $X$ is closed (i.e. $\g(X) = X$) if and only if there is a $Y \sse A$ with $X = Y^\lhd$;
\item the sets $\{b\}^\lhd$ for $b \in A$ form a base for $\g$ i.e.
$$
\g(X) = \bigcap\{\{b\}^\lhd: X \sse \{b\}^\lhd\}.
$$
\end{enumerate}
\end{lemma}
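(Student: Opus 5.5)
The plan is to first record the basic properties of the polarity maps, and then derive the three items of Lemma~\ref{basic} from them.

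\emph{Step 1: the two maps form a Galois connection.} Directly from the definitions, for all $X,Y \sse A$,
$$
Y \sse X^\rhd \iff X\times Y \sse R \iff X \sse Y^\lhd .
$$
This equivalence has three consequences.
\begin{itemize}
\item Both maps are antitone: if $X\sse X'$ then $X'^\rhd \sse X^\rhd$, and similarly for $\lhd$.
\item Both compositions are extensive: $X \sse X^{\rhd\lhd}$ (take $Y=X^\rhd$) and $Y\sse Y^{\lhd\rhd}$ (take $X=Y^\lhd$).
\item There are the triple identities $X^{\rhd\lhd\rhd}=X^\rhd$ and $Y^{\lhd\rhd\lhd}=Y^\lhd$. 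One inclusion comes from extensivity applied to $X^\rhd$. The other comes from applying the antitone map $\rhd$ to $X\sse X^{\rhd\lhd}$.
\end{itemize}

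\emph{Step 2: item (1).} The operator $\g$ is extensive by Step 1. It is monotone, being the composite of two antitone maps. It is idempotent because
$$
\g(\g(X)) = (X^{\rhd\lhd\rhd})^\lhd = (X^\rhd)^\lhd = \g(X),
$$
using the triple identity. Hence $\g$ is a closure operator.

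\emph{Step 3: item (2).} If $\g(X)=X$, then $X=(X^\rhd)^\lhd$, so we may take $Y=X^\rhd$. Conversely, if $X=Y^\lhd$, then
$$
\g(X)=Y^{\lhd\rhd\lhd}=Y^\lhd=X,
$$
again by the triple identity.

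\emph{Step 4: item (3).} Observe first that
$$
Y^\lhd=\bigcap_{b\in Y}\{b\}^\lhd ,
$$
which is immediate from the definition. Next, $X\sse\{b\}^\lhd$ holds iff $b\in X^\rhd$, by the Galois equivalence of Step 1 with $Y=\{b\}$. Therefore
$$
\g(X)=(X^\rhd)^\lhd=\bigcap_{b\in X^\rhd}\{b\}^\lhd=\bigcap\{\{b\}^\lhd: X\sse\{b\}^\lhd\}.
$$
By item (2) each $\{b\}^\lhd$ is itself closed, so these sets do form a base for $\g$.

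The argument is elementary. The only point needing care is the triple identity in Step 1, since both idempotence and the characterization of closed sets rest on it.
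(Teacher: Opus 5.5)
Your proof is correct. The paper gives no argument, calling the lemma folklore, and your derivation from the Galois-connection equivalence $Y\sse X^\rhd \iff X\times Y\sse R \iff X\sse Y^\lhd$ (antitonicity, extensivity of both composites, and the triple identities) is exactly the standard folklore proof; the only convention to keep explicit is that an empty intersection in item (3) is read as $A$, which agrees with $\emptyset^\lhd=A$ when $X^\rhd=\emptyset$.
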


Let now $\alg A$ be an $\mathsf{FL}_{e}$-algebra. For $X,Y \sse A$ we define $XY  = \{ab: a \in X, b \in Y\}$ (and instead of $\{a\}X$ we write $aX$) and
$X \imp Y = \{a: aX \sse Y\}$. We say that a closure operator $\g$ on $\mathcal P(A)$ is a {\bf nucleus} if for all $X,Y \sse A$
\begin{equation}
\g(XY) =\g(\g(X)\g(Y)). \tag{N}
\end{equation}
\par
Then $\g$ is a {\bf downward nucleus} if every $\g$-closed set is a downset.  The following theorem is a consequence of a much more general statement about the so-called {\em power monoid construction} in \cite{GalatosJipsen2013}; we sketch a short easy proof for the reader's convenience.

\begin{theorem}\label{nuclear} Let $\alg A$ be a $\mathsf {FL}_e$-algebra and  let $\g$ be a nucleus on  $\mathcal P(A)$. Then the structure
$$
\mathcal P(\alg A)_\g=\la \mathcal P(A)_\g, \join_\g, \cap, \cdot_\g, \imp,\g(\{0\}),\g(\{1\})\ra
$$
where
$$
X \join_\g Y = \g(X \cup Y)\quad X \cdot_\g Y = \g(XY),
$$
is a complete $\mathcal{FL}_e$-algebra.

\end{theorem}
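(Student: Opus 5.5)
We need to check that $\mathcal P(\alg A)_\g$ is a complete lattice, that $\cdot_\g$ is a commutative monoid operation with unit $\g(\{1\})$, and that $\imp$ residuates $\cdot_\g$; the constant $\g(\{0\})$ is closed by definition and needs no further checking. The plan is to first work in the full power set and then transfer everything through the nucleus identity (N).

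\emph{Lattice structure.} The $\g$-closed sets are closed under arbitrary intersections, since $\g$ is a closure operator. Hence $\mathcal P(A)_\g$ is a complete lattice: meets are intersections and joins are $\g(\bigcup X_i)$. In particular $\join_\g$ and $\cap$ are the binary lattice operations.

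\emph{Monoid structure.} In $\mathcal P(A)$ the product $XY$ is associative and commutative with unit $\{1\}$, because $\alg A$ is a commutative monoid. For closed $X,Y,Z$, (N) and $X=\g(X)$, $Z=\g(Z)$ give
$$
(X\cdot_\g Y)\cdot_\g Z=\g(\g(XY)\g(Z))=\g((XY)Z)=\g(X(YZ))=X\cdot_\g(Y\cdot_\g Z).
$$
Commutativity is immediate. For the unit, $X\cdot_\g\g(\{1\})=\g(\g(X)\g(\{1\}))=\g(X\{1\})=\g(X)=X$.

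\emph{Residuation.} First, for closed $Y$ and arbitrary $X$, the set $X\imp Y$ is closed; this is the key point. By definition $Z\sse X\imp Y$ iff $ZX\sse Y$. Since $Y$ is closed and $\g$ is monotone, $ZX\sse Y$ iff $\g(ZX)\sse Y$. Taking $Z=X\imp Y$ we get $\g(X\imp Y)X\sse\g(\g(X\imp Y)\g(X))=\g((X\imp Y)X)\sse Y$. Hence $\g(X\imp Y)\sse X\imp Y$, so $X\imp Y$ is closed.

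Residuation then follows: for closed $X,Y,Z$,
$$
X\cdot_\g Z\sse Y\iff \g(XZ)\sse Y\iff XZ\sse Y\iff Z\sse X\imp Y.
$$
Thus $\imp$, restricted to closed sets, is the residual of $\cdot_\g$, and $\mathcal P(\alg A)_\g$ is a complete $\mathsf{FL}_e$-algebra.

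The only step needing care is the closedness of $X\imp Y$, which depends on using (N) in the form $\g(\g(U)\g(V))=\g(UV)$ together with monotonicity of $\g$. The downward hypothesis on $\g$ is not needed at this stage.
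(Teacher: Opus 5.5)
Your proposal is correct and follows essentially the same route as the paper's sketch. You get the complete lattice from the closure operator, associativity and the unit from (N), closedness of $X\imp Y$ from the chain $\gamma(X\imp Y)X\subseteq\gamma\bigl((X\imp Y)X\bigr)\subseteq Y$, and residuation from $\gamma(XZ)\subseteq Y\iff XZ\subseteq Y$. You are only slightly more explicit, spelling out the unit computation and noting that the closedness step needs only $Y$ closed.
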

\begin{proof} (Sketch)
Since $\g$ is a closure operator $\la \mathcal P(A)_\g, \join_\g, \cap\ra$ is a complete lattice. Next using (N) one shows that
 $\cdot_\g$ is associative, while commutativity follows from commutativity in $\alg A$. That  $\g(\{1\})$ is the unit for the product is obvious.
We now show that $\mathcal P(A)_\g$ is closed under $\imp$. If $X,Y \in \mathcal P(A)_\g$ then
\begin{align*}
\g(X \imp Y)X &\sse \g(\g(X \imp Y)\g(X))\\
&= \g((X \imp Y)X)  \sse \g(Y)\\
&=Y.
\end{align*}
Hence $\g(X \imp Y) \sse X \imp Y$ and the other containment holds because $\g$ is  extensive.

Let us show that $(\cdot_\g, \imp)$ is a residuated pair. For $X,Y,Z \in \mathcal P(A)_\g$
\begin{align*}
&X \cdot_\g Y \sse Z \quad\text{if and only if}\\
&\g(XY) \sse Z \quad\text{if and only if}\\
&XY \sse Z\quad\text{if and only if}\\
&\text{for all $x \in X$, $xY \sse Z$} \quad\text{if and only if}\\
&\text{for all $x \in X$, $ x \in Y \imp Z$} \quad\text{if and only if}\\
&X \sse Y \imp Z.
\end{align*}

\end{proof}

If $\alg A$ is any $\mathsf{FL}_e$-algebra, let us define a relation $N$ on $\alg A$ by setting
$$
(a,b) \in N\quad\text{if and only if}\quad ab \le 0.
$$

\begin{lemma} The closure operator $\g$ on $\alg A$, associated to  the relation $N$, is a downward nucleus on $\mathcal P(A)$.
\end{lemma}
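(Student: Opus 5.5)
Since $ab\le 0$ iff $ba\le 0$ by commutativity, $N$ is symmetric, so $X^\rhd = X^\lhd$. By Lemma~\ref{basic}, $\g(X)=X^{\rhd\lhd}$ is a closure operator, and the closed sets are exactly those of the form $Y^\lhd$. For $b\in A$ we have
$$\{b\}^\lhd=\{a: ab\le 0\}=\{a: a\le b\imp 0\}={\downarrow}\simA b.$$
This is a principal downset. By part (3) of Lemma~\ref{basic}, every closed set is an intersection of such downsets, hence a downset. So $\g$ is downward, and it remains to verify (N).

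For (N), one inclusion is immediate: $XY\sse \g(X)\g(Y)$, so $\g(XY)\sse\g(\g(X)\g(Y))$ by monotonicity. For the reverse inclusion it suffices to show $\g(X)\g(Y)\sse \g(XY)$, since then applying $\g$ and idempotency give the claim. Write $Z=(XY)^\rhd$, so that $\g(XY)=Z^\lhd$. We must show that for $x'\in\g(X)$, $y'\in\g(Y)$ and $c\in Z$ we have $x'y'c\le 0$.

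The key step is to move one factor at a time across the polarity using residuation. Fix $c\in Z$ and $y\in Y$. For every $x\in X$ we have $x(yc)=(xy)c\le 0$, so $yc\in X^\rhd$. Since $x'\in X^{\rhd\lhd}$, this gives $x'yc\le 0$ for all $y\in Y$. Hence $x'c\in Y^\rhd$, and since $y'\in Y^{\rhd\lhd}$, we get $y'x'c\le 0$. Thus $x'y'\in Z^\lhd=\g(XY)$, as required. Associativity and commutativity of the monoid are used throughout this argument.

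I expect no serious obstacle. The only point needing care is the bookkeeping of the polarity identities, namely that $\g(X)=\{a: ab\le 0 \text{ for all } b\in X^\rhd\}$. Note that the argument uses only the monoid structure together with the order, not involutivity. One can also observe that $\g(\{1\})={\downarrow}1$, because $1^\rhd={\downarrow}0$ and $\{b: b\le 0\}^\lhd={\downarrow}\simA 0$. This observation is not needed for the lemma.
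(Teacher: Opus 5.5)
Your proof is correct and follows essentially the same route as the paper. Downwardness comes from $\{b\}^\lhd={\downarrow}\simA b$ together with the basis description of $\g$, and (N) reduces to $\g(X)\g(Y)\sse\g(XY)$, proved by moving one factor at a time across the polarity. You move the $X$-factor first where the paper moves the $Y$-factor first, which makes no difference.

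One caveat concerns your closing aside, which the lemma does not need: ${\downarrow}\simA 0={\downarrow}1$ requires $0\imp 0=1$. This holds in the involutive (Girard) setting but can fail in an arbitrary $\mathsf{FL}_e$-algebra, so that particular remark does rely on more than the monoid and order structure.
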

\begin{proof} Let $X,Y \sse A$;  we will show that
$$
\g(X)\g(Y) \sse \g(XY)
$$
since, using extensivity and monotonicity,  this yields (N). To do that we must show that if $a \in \g(X)$, $b\in \g(Y)$ and $c \in (XY)^\rhd$,
$abc \le 0$.   Now for every $x \in X$,  $y \in Y$ one has  $xyc \le 0$ so $xc \in Y^\rhd$ for all $x \in X$. As $b \in (Y^\rhd)^\lhd$  it follows that
$bxc\le 0$ for all $ x \in X$ and thus $bc \in X^\rhd$. Finally since $a \in (X^\rhd)^\lhd$ we get $abc \le 0$ as wished.

To show that $\g$ is downward we have to show that every $\g(X)$ is a downset. By Lemma  \ref{basic}
$$
\g(X) = \bigcap\{\{b\}^\lhd: X \sse \{b\}^\lhd\}.
$$
and $\{b\}^\lhd = \downarrow(\simA  b)$. Hence they are all downsets and so is $\g(X)$, being an intersection of downsets.
\end{proof}

\begin{theorem} Let $\alg A$ be a Girard algebra, $N$ the binary nuclear relation described before and $\g$
its associated nucleus. Then $\mathcal P(\alg A)_\g$ is a Girard algebra. Hence every Girard algebra is embeddable in a complete Girard algebra.
\end{theorem}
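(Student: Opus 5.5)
By Theorem \ref{nuclear} and the preceding lemma, $\mathcal P(\alg A)_\g$ is already a complete $\mathsf{FL}_e$-algebra. To show it is a Girard algebra I have to check two things: that it is involutive, and that it is bounded, with $\top = A$ and $\bot = \g(\varnothing)$. Boundedness is automatic, because the closed sets form a complete lattice under inclusion. So the real content is involutivity, together with an embedding $\alg A \to \mathcal P(\alg A)_\g$ that preserves all operations, including $\top,\bot$.

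\emph{Identifying the closed sets and the negation.} Using $\{b\}^\lhd = \downarrow(\simA b)$ and Lemma \ref{basic}, every closed set has the form $\bigcap_{b\in Y}\downarrow(\simA b)$. Since $\simA$ is a bijection on $A$, these are exactly the intersections of principal downsets. I will then compute the negation in $\mathcal P(\alg A)_\g$, namely $X \imp \g(\{0\})$. Note that $\g(\{0\}) = \{0\}^{\rhd\lhd}$. Here $\{0\}^\rhd = \{b : 0b\le 0\}$, and this set contains $1$, so $\g(\{0\})\sse \downarrow 0$. Conversely, $\downarrow 0 = \{1\}^\lhd$ is closed and contains $0$. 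Hence $\g(\{0\}) = \downarrow 0$. It follows that
\[
X\imp \downarrow 0 = \{a : aX \le 0\} = X^\lhd,
\]
which equals $X^\rhd$ because $N$ is symmetric ($\cdot$ is commutative). The double negation of a closed $X$ is then $X^{\rhd\lhd} = \g(X) = X$, since $X^\rhd$ is itself of the form $Y^\lhd$ and hence closed. This gives involutivity. It is the key step, and it relies only on the symmetry of $N$ and on the identification $\g(\{0\})=\downarrow 0$.

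\emph{The embedding.} I define $\iota(a) = \downarrow a$. This set is closed, since $\downarrow a = \{\simA a\}^\lhd$. The following checks are routine:
\begin{itemize}
\item $\iota$ is injective and order-preserving;
\item $\downarrow a\cap \downarrow b = \downarrow(a\meet b)$;
\item $\iota(0)=\downarrow 0 = \g(\{0\})$, and $\g(\{1\}) = \downarrow 1$ by the same argument as above;
\item $\iota(\top) = A$;
\item $\iota(\bot)=\{\bot\}$, which equals $\g(\varnothing) = A^\lhd = \{a: a\top\le 0\} = \downarrow\simA\top = \downarrow \bot$.
\end{itemize}
For products, $\g(\downarrow a\downarrow b)$ is the intersection of the principal downsets $\downarrow c$ containing $\downarrow a\downarrow b$, equivalently those with $ab\le c$. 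This intersection is $\downarrow(ab)$. The same argument gives $\g(\downarrow a\cup\downarrow b) = \downarrow(a\join b)$. For implication,
\[
\downarrow a\imp\downarrow b = \{x : xa' \le b \text{ for all } a'\le a\} = \{x: xa\le b\} = \downarrow(a\imp b).
\]
Thus $\iota$ is an embedding of Girard algebras into the complete Girard algebra $\mathcal P(\alg A)_\g$.

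\emph{Main obstacle.} The only delicate points are getting the constants right: $\g(\{0\})=\downarrow 0$ and $\g(\varnothing)=\downarrow\bot$. These depend on using the involution to rewrite $\{b\}^\lhd$ as $\downarrow(\simA b)$, and on the fact that $1$ belongs to $\{0\}^\rhd$. Once these are in place, the involutivity computation is the short argument $X^{\rhd\lhd}=X$ given above.
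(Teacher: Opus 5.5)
Your proposal is correct. Its second half is the same verification the paper performs: closedness of $\downarrow a=\{\simA a\}^\lhd$, and preservation of $\cap$, $\join_\g$, $\cdot_\g$, $\imp$ and the constants, including $\g(\varnothing)=A^\lhd=\{\bot\}$.

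The difference is in the involutivity step.
\begin{itemize}
\item The paper proves only $\simA\simA X\sse X$, taking the reverse inclusion from $x\le(x\imp y)\imp y$. It characterizes $u\in\simA\simA X$ by ``$Xa\sse\downarrow 0$ implies $ua\le 0$ for all $a$''. It then writes $X=\bigcap_i\downarrow\simA b_i$ using the basis of Lemma~\ref{basic}, and concludes $u\le\simA b_i$ for every $i$.
\item You note that $X\imp\downarrow 0$ is literally the polar $X^\lhd$, which equals $X^\rhd$ because $N$ is symmetric. Hence $\simA\simA X=X^{\rhd\lhd}=\g(X)=X$, with equality at once.
\end{itemize}
The paper's characterization of $\simA\simA X$ is exactly the statement $u\in X^{\rhd\lhd}$. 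So its basis argument re-derives what is already the definition of $X$ being closed.

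Your version is shorter and is the standard phase-semantics argument (facts satisfy $X^{\perp\perp}=X$). It also makes visible that involutivity of the completion needs only commutativity and $\g(\{0\})=\downarrow 0$, which you obtain without invoking involutivity of $\alg A$. Involutivity of $\alg A$ is then used only to make each $\downarrow a$ closed, for the embedding.

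One small slip. In proving $\g(\{0\})=\downarrow 0$, your ``conversely'' sentence ($\downarrow 0$ is closed and contains $0$) gives $\g(\{0\})\sse\downarrow 0$ a second time. The inclusion $\downarrow 0\sse\g(\{0\})$ comes instead from $\g(\{0\})$ being a downset containing $0$. Alternatively, it follows directly from the fact that $a\le 0$ and $0b\le 0$ imply $ab\le 0$.
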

\begin{proof} By Theorem \ref{nuclear},  for the first assertion we need only show that $\g(\{0\})$ is an involutive element. In other words if we set
$\simA X = X \imp \g(\{0\})$, we must show that $\simA\simA X = X$ for every $X \in \mathcal P(A)_\g$. But since the inequality
$x\le (x \imp y) \imp y$ holds in any commutative residuated lattice we need only show that $\simA\simA X \sse X$.

For every $a\in A$, residuation and involutivity give $\{a\}^\rhd=\downarrow\!\simA a$ and $\g(\{a\})=(\downarrow\!\simA a)^\lhd=\downarrow a$. In particular $\g(\{0\}) = \darrow\{0\}$ and $\g(\{1\}) = \darrow\{1\}$. Given this it is easily seen that $u \in \simA\simA X$ if and only if
$$
\text{for all $a \in A$, $Xa \sse \darrow\{0\}$ implies $ua \le 0$}.
$$
Now let's write $X $ as $ \bigcap_{i \in I} \darrow\{\simA b_i\}$; for all $i \in I$ and $x \in X$ we have $x \in \darrow\{\simA  b_i\}$ i.e.
$x \le \simA  b_i$ which implies $xb_i \le 0$. Since $x$ was generic we have that for all $i \in I$, $Xb_i \sse \darrow \{0\}$. Since $u \in \simA\simA X$
by the above property $ub_i \le 0$ and so $u \le \simA b_i$ for all $i$. But this implies that $u \in \bigcap_{i \in I}\darrow\{\simA  b_i\} = X$, as wished.

It remains to verify the embedding. The map $a\mapsto\g(\{a\})=\downarrow a$ is injective and preserves the operations:
\begin{align*}
\downarrow a\cap\downarrow b&=\downarrow(a\meet b),\\
\g(\downarrow a\cup\downarrow b)&=\downarrow(a\join b),\\
\g((\downarrow a)(\downarrow b))&=\downarrow(ab),\\
(\downarrow a)\imp(\downarrow b)&=\downarrow(a\imp b).
\end{align*}
For joins, use $\{a,b\}^\rhd=\downarrow(\simA a\meet\simA b)$ and involutivity; the other equalities follow from the principal-closure formula and residuation. The constants $0,1$ are preserved by that formula, and $A=\downarrow\top$. Finally, $\g(\varnothing)=A^\lhd=\{\bot\}$: $x\in A^\lhd$ implies $x\top\le0$, hence $\top\le\simA x$ and $x=\bot$; the converse follows from $\bot a=\bot\le0$.
\end{proof}

\begin{corollary}\label{cor:directgirale}
Every Girard algebra admits a girale expansion on the same universe. Consequently the class of Girard-algebra reducts of girales is exactly $\mathsf{GA}$; in particular $\cc{LL}$ is a conservative extension of $\cc{MALL}$.
\end{corollary}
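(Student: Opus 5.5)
The plan is to write down an explicit girale expansion on the universe of $\alg A$ itself, without passing through the completion of the previous theorem. Given a Girard algebra $\alg A$, I will define
$$
\escl a = \begin{cases} 1 & \text{if } a \ge 1,\\ \bot & \text{otherwise.}\end{cases}
$$
Everything then reduces to checking (G1)--(G4) for this operation and translating the result into the logical statement.

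First I need one auxiliary fact: $\bot x = \bot$ for every $x$. Since $\bot$ is least, $\bot \le x \imp \bot$, so residuation gives $\bot x \le \bot$, and hence $\bot x = \bot$. The axioms are then checked case by case.
\begin{enumerate}
\item (G1) holds because $1 \ge 1$.
\item For (G2): if $a \ge 1$ then $\escl a = 1 = a \meet 1$; otherwise $\escl a = \bot \le a \meet 1$.
\item For (G3): if $a,b \ge 1$ then $a \meet b \ge 1$, so both sides equal $1\cdot 1 = 1$. If, say, $a \not\ge 1$, then $a \meet b \not\ge 1$, so $\escl(a\meet b) = \bot$. The left side is $\bot\cdot\escl b$, which equals $\bot$ by the auxiliary fact.
\item For (G4): $\escl 1 = 1$, and $\escl\bot = \bot$ unless $\bot \ge 1$. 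In that case $\alg A$ is trivial and everything holds.
\end{enumerate}
So $\la \alg A, \escl\ra$ is a girale whose Girard reduct is $\alg A$.

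For the consequences, the Girard-algebra reduct of any girale is a Girard algebra by definition. Conversely, every Girard algebra is such a reduct by the construction above, so the class of reducts is exactly $\mathsf{GA}$.

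For conservativity I will use the strong algebraizability theorem, with the same translation of formulas into equations for both logics. Suppose $\Gamma \vdash_{\cc{LL}} \f$, where $\Gamma\cup\{\f\}$ are $\cc{MALL}$-formulas, but $\Gamma \nvdash_{\cc{MALL}} \f$. Then some Girard algebra $\alg A$ and some valuation satisfy the equations translating $\Gamma$ but not the one translating $\f$. Expanding $\alg A$ to a girale leaves the values of $\escl$-free terms unchanged. This yields a girale countermodel, contradicting completeness of $\cc{LL}$ with respect to $\mathsf{Gi}$. The reverse inclusion is immediate, since $\cc{LL}$ extends $\cc{MALL}$.

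I expect no real obstacle here. The only delicate points are the identity $\bot x = \bot$ used in (G3) and the degenerate case in (G4).
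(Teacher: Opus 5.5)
Your proposal is correct and is essentially the paper's proof: the same operation ($\escl a=1$ if $1\le a$, $\bot$ otherwise), the same case checks of (G1)--(G4), and the same appeal to strong algebraizability for conservativity. You also spell out details the paper leaves implicit, namely the derivation of $\bot x=\bot$, the degenerate case in (G4), and the countermodel argument.
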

\begin{proof}
On a Girard algebra define
\[
\escl_H a=\begin{cases}1,&1\le a,\\ \bot,&1\nleq a.\end{cases}
\]
Then (G1) and (G2) are immediate. For (G3), both sides are $1$ when $1\le a,b$, and otherwise both are $\bot$. The values $1,\bot$ are fixed, giving (G4) (the trivial algebra causes no exception). The conservativity statement follows from the strong algebraizability theorem stated above.
\end{proof}

 \subsection{$0$-generated algebras}

 In this section we will start our investigation of  the lattice of subvarieties of Girard algebras, which we will denote by $\Lambda(\mathsf{GA})$. We will start our investigation with the atoms of the lattice.

  \begin{remark} Beside being a natural starting point, atoms in $\Lambda(\mathsf{GA})$ correspond to coatoms in the lattice of axiomatic extensions of $\cc{MALL}$ (see the end of Section \ref{sec:preliminaries}).  Those are exactly the {\em Post-complete} extensions of $\cc{MALL}$, i.e. the consistent extensions having no consistent proper extensions. Our characterization of the atoms (Theorem \ref{thm:realinterval} below) will imply that there are $2^\aleph_0$  such extensions.
  \end{remark}

 It is a well-known result by B. J\'onsson \cite{Jonsson1967} (and we often refer  to it as {\em J\'onsson Lemma}) that if $\alg A$ is an algebra in a congruence distributive variety, then all the subdirectly irreducible members of $\VV(\alg A)$ belong to
 $\HH\SU\PP_u(\alg A)$. Clearly if $\alg A$ is finite this means that they all belong to $\HH\SU(\alg A)$.  Moreover we define an algebra to be {\bf strictly simple} if it is  simple and has no proper subalgebras.

Since Girard algebras are congruence distributive the previous fact applies.
The following proposition has been proved in \cite{Galatos2005} for residuated lattices, but the same proof goes through for Girard algebras.

\begin{proposition} A variety $\vv V$ is a finitely generated atom in $\Lambda(\mathsf{GA})$ if and only if it is generated by a finite strictly simple algebra.
\end{proposition}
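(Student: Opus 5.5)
We prove the two implications separately. Throughout we use that $\mathsf{GA}$ is congruence distributive, so Jónsson's Lemma applies, and that the trivial variety is the bottom of $\Lambda(\mathsf{GA})$.

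For the direction ($\Leftarrow$), let $\alg A$ be a finite strictly simple Girard algebra and let $\vv W$ be a nontrivial subvariety of $\VV(\alg A)$. We will show that $\vv W=\VV(\alg A)$.
\begin{enumerate}
\item Choose a subdirectly irreducible $\alg B\in\vv W$. By Jónsson's Lemma, since $\alg A$ is finite, $\alg B\in\HH\SU(\alg A)$.
\item Because $\alg A$ is strictly simple, its only subalgebra is $\alg A$ itself. Note that the subalgebra generated by the constants is a subalgebra, so it equals $\alg A$.
\item The homomorphic images of $\alg A$ are $\alg A$ and the trivial algebra, since $\alg A$ is simple. As $\alg B$ is subdirectly irreducible, it is nontrivial, so $\alg B\cong\alg A$.
\item Hence $\alg A\in\vv W$, which gives $\vv W=\VV(\alg A)$.
\end{enumerate}
Finally, $\VV(\alg A)$ is nontrivial and finitely generated, so it is a finitely generated atom.

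For the direction ($\Rightarrow$), let $\vv V$ be an atom generated by a finite algebra $\alg C$.
\begin{enumerate}
\item Since $\vv V$ is nontrivial, we can pick a subdirectly irreducible $\alg B\in\vv V$. By Jónsson's Lemma, $\alg B\in\HH\SU(\alg C)$, so $\alg B$ is finite.
\item Among the nontrivial algebras in $\HH\SU(\alg C)$, choose one, $\alg D$, of minimal cardinality. This exists because $\HH\SU(\alg C)$ consists of finite algebras and contains a nontrivial member.
\item Every proper subalgebra of $\alg D$ and every proper quotient of $\alg D$ lies in $\HH\SU(\alg C)$ and is strictly smaller, so by minimality it is trivial.
\item A Girard algebra has a trivial subalgebra only if it is trivial, since the constants must coincide. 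So $\alg D$ has no proper subalgebras and no nontrivial proper quotients; that is, $\alg D$ is strictly simple.
\item Now $\VV(\alg D)$ is a nontrivial subvariety of the atom $\vv V$, so $\vv V=\VV(\alg D)$.
\end{enumerate}

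The main obstacle is the point in the last part where a one-element subalgebra forces the whole algebra to be trivial. This is the only place where the specific signature matters, rather than the general residuated-lattice argument from \cite{Galatos2005}. It holds because a one-element subalgebra contains $\top$ and $\bot$, so $\top=\bot$ and the algebra collapses.

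We also need that "strictly simple" excludes degenerate cases. Under our definition, a strictly simple algebra is simple, hence nontrivial. This is exactly what makes $\VV(\alg A)$ nontrivial in the first direction.
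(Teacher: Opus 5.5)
Your proof is correct and is essentially the argument the paper defers to (the proof in \cite{Galatos2005}, which the paper says carries over). It uses J\'onsson's Lemma to show that a finite strictly simple generator gives an atom, and a nontrivial member of $\HH\SU(\alg C)$ of minimal cardinality for the converse; your observation that a nontrivial Girard algebra has no one-element subalgebra (it would force $\top=\bot$) is precisely what lets the paper's definition of strictly simple (no proper subalgebras at all) do the job here.
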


 Observe also that in any Girard algebra $a \imp b= \simA(a\cdot \simA b)$; so we have only to consider the lattice structure and the operations $\simA$ and $\cdot$. Now, because of involutivity and the bounds, in any Girard algebra $\simA 0=1$ (hence $\simA 1=0)$, $\simA \top =\bot$ (hence $\simA \bot =\top$) and $\bot \cdot a = \bot$ for all $a$.
Hence the multiplication table of the constants is almost determined:
\begin{center}
\begin{tabular}{c|cccc}
  $\cdot$ & $\bot$ & $0$ & $1$ & $\top$ \\
  \hline
  $\bot$ & $\bot$ & $\bot$ & $\bot$ & $\bot$  \\
  $0$ & $\bot$ & $??$ & $0$ & $??$ \\
  $1$ & $\bot$& $0$ & $1$ & $\top$  \\
  $\top$ & $\bot$ & $??$ & $\top$ & $\top$  \\
\end{tabular};
\end{center}
the only free spots are $0 \cdot 0$ and $0 \cdot \top$. Moreover it is obvious that a Girard algebra has no proper subalgebras if and only if  it is generated by the constants. Given our observation above we say that a Girard algebra is {\bf $0$-generated} if it is generated by the constants; hence the finitely generated atoms in $\Lambda(\mathsf{GA})$ ($\Lambda(\mathsf{Gi})$) are exactly those generated by finite, simple and $0$-generated Girard algebras (girales). It also follows from general results about congruence distributive varieties by B. J\'onsson that the correspondence between finite simple and 0-generated Girard algebras and finitely generated atoms in $\Lambda(\mathsf{GA})$ is a bijection up to isomorphism.

For Girard algebras, however, we can remove finiteness and keep the same outcome. Let $\alg A$ be a Girard algebra; we denote by $\alg A_0$ the $0$-generated subalgebra of $\alg A$.

\begin{lemma}\label{lem:freezero}
If $\mathbf A$ is a  Girard algebra, then
\[
\alg A_0\cong \alg F_{\VV(\alg A)}(\emptyset).
\]
\end{lemma}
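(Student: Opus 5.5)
The plan is to use the standard universal-algebra description of the free algebra on the empty set of generators. The signature has constants $0,1,\top,\bot$, so $\alg F_{\VV(\alg A)}(\emptyset)$ exists. It can be realized as the algebra of ground (variable-free) terms $T(\emptyset)$ modulo the congruence
\[
\Theta=\{(t,s): \VV(\alg A)\models t\app s\}.
\]
Since a ground identity holds in $\VV(\alg A)$ if and only if it holds in $\alg A$ (identities are preserved by $\HH,\SU,\PP$), we get $\Theta=\{(t,s): t^{\alg A}=s^{\alg A}\}$.

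Next consider the evaluation map $e:T(\emptyset)\to\alg A$, $t\mapsto t^{\alg A}$. It is a homomorphism from the ground term algebra into $\alg A$. Its image is the smallest subuniverse of $\alg A$ containing the constants, i.e.\ the universe of $\alg A_0$, because the set of values of ground terms is closed under the operations and is contained in every subuniverse. Its kernel is exactly $\Theta$ by the previous paragraph. The First Isomorphism Theorem then gives
\[
\alg A_0\cong T(\emptyset)/\Theta\cong \alg F_{\VV(\alg A)}(\emptyset).
\]

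Equivalently, one can phrase the argument through the universal property. The unique homomorphism $\alg F_{\VV(\alg A)}(\emptyset)\to\alg A$ has image $\alg A_0$, and it is injective because two distinct elements of the free algebra are separated in some member of $\VV(\alg A)$. Such separation can be pushed down to $\alg A$, since the free algebra embeds in a power of $\alg A$ and ground terms are evaluated coordinatewise identically.

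There is no real obstacle here. The only point requiring care is the observation that ground identities true in $\alg A$ are true in all of $\VV(\alg A)$. This holds because $\VV(\alg A)=\HH\SU\PP(\alg A)$ and $\VV(\alg A)$ is defined by the identities of $\alg A$. A second minor point is the degenerate case where $\alg A$ is trivial; there both sides are trivial and the statement still holds. Nothing specific to Girard algebras is needed beyond the presence of constants in the signature.
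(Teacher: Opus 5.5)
Your proposal is correct and follows essentially the paper's argument: you evaluate closed terms in $\alg A$, observe that the image is $\alg A_0$ and that the kernel consists exactly of the closed identities of $\alg A$ (equivalently, of $\VV(\alg A)$), and conclude by the first isomorphism theorem. The only cosmetic difference is that the paper takes $\alg F_{\mathsf{GA}}(\emptyset)$ as the domain of the canonical homomorphism, whereas you use the absolutely free ground-term algebra.
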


\begin{proof}
Observe that there is  a canonical
homomorphism
$h:\alg F_{\mathsf{GA}}(\emptyset)\to\alg A$, whose image is
$\alg A_0$.

If $s,t$ are two terms in  $\alg F_{\mathsf{GA}}(\emptyset)$, since they contain only constants,  $h(s) = h(t)$ if and only if $s^\alg A = t^\alg A$ if and only if $\alg A \vDash s \app t$.  This means that $\op{ker}(h)$ is exactly the congruence defining $\alg F_{\VV(\alg A)}(\emptyset)$, from which the conclusion follows.
\end{proof}

\begin{corollary}\label{cor:freezero} If $\alg S$ is a $0$-generated Girard algebra then, for every $\alg B\in\VV(\alg S)$ there is a unique
homomorphism $\alg S\to\alg B$, and its image is $\alg B_0$.
\end{corollary}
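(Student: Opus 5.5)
By Lemma \ref{lem:freezero}, since $\alg S$ is $0$-generated we have $\alg S=\alg S_0\cong\alg F_{\VV(\alg S)}(\emptyset)$, so $\alg S$ itself is the free algebra on the empty set of generators in $\VV(\alg S)$. Existence and uniqueness of a homomorphism $\alg S\to\alg B$ for $\alg B\in\VV(\alg S)$ then follow from the universal property of free algebras. There is a unique homomorphism $\alg F_{\VV(\alg S)}(\emptyset)\to\alg B$ extending the empty map; composing with the isomorphism $\alg S\cong\alg F_{\VV(\alg S)}(\emptyset)$ gives the desired map.

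To make this concrete, and to avoid a detour through the isomorphism, I would argue directly. Define $f:\alg S\to\alg B$ by $f(t^{\alg S})=t^{\alg B}$ for every constant term $t$; every element of $\alg S$ has this form because $\alg S$ is generated by the constants. Well-definedness is the only point needing care. If $s^{\alg S}=t^{\alg S}$, then $\alg S\vDash s\app t$, because $s,t$ contain no variables. This equation therefore holds throughout $\VV(\alg S)$, and in particular $s^{\alg B}=t^{\alg B}$. The map $f$ is a homomorphism, since for each basic operation $\omega$ we have $\omega(t_1^{\alg S},\dots)=(\omega(t_1,\dots))^{\alg S}$, and constant terms evaluate compatibly.

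Uniqueness holds because any homomorphism must send each constant of $\alg S$ to the corresponding constant of $\alg B$. A homomorphism is determined by its values on a generating set, and the constants generate $\alg S$.

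For the image, $f(S)=\{t^{\alg B}: t \text{ a constant term}\}$, which is exactly the subalgebra of $\alg B$ generated by the constants, namely $\alg B_0$. I expect no real obstacle here. The only subtle step is well-definedness, and it rests on the fact, already used in the proof of Lemma \ref{lem:freezero}, that equalities between constant terms are identities.
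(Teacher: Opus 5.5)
Your proposal is correct and follows the paper's own argument: $\alg S=\alg S_0\cong\alg F_{\VV(\alg S)}(\emptyset)$ by Lemma \ref{lem:freezero}, and then the universal property of the free algebra. Your direct construction $t^{\alg S}\mapsto t^{\alg B}$ simply unwinds that universal property, and it has the small merit of showing explicitly why the image is $\alg B_0$, which the paper leaves implicit.
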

\begin{proof} If $\alg S$ is $0$-generated, then $\alg S_0  =\alg S$; hence by Lemma \ref{lem:freezero}
\[
\alg S\cong \alg F_{\VV(\alg S)}(\emptyset).
\]
The rest follows
from the universal property of $\alg F_{\VV(\alg S)}(\emptyset)$.
\end{proof}

\begin{lemma}\label{lem:nontrivial} If $\alg A$ is a Girard algebra and $\th$ is a proper congruence of $\alg A$, then $(1,\bot) \notin \th$. Hence if
$\alg A$ is nontrivial, then $\bot \ne 1$ in $\alg A$.
\end{lemma}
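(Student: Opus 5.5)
We argue by contraposition, using the correspondence between congruences and filters recalled in Section \ref{sec:preliminaries}. Suppose $\th \in \Con A$ and $(1,\bot)\in\th$. The plan is to show that $\th$ is then the total congruence, so that $\th$ is not proper. It suffices to show that every element $a\in A$ is $\th$-related to $\top$, or equivalently that the filter $A^+/\th$ is all of $A$.

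First, since $\bot$ is the least element, $\bot\cdot a=\bot$ for every $a\in A$. This follows from residuation: $\bot\le a\imp\bot$ gives $\bot\cdot a\le\bot$. Multiplication is compatible with $\th$, so from $1\,\th\,\bot$ we obtain
\[
a = 1\cdot a \;\th\; \bot\cdot a=\bot
\]
for every $a\in A$. Thus every element is congruent to $\bot$, and hence $\th=A\times A$.

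Alternatively, one can argue on the filter side. The filter $F=A^+/\th$ contains $\bot$, because $\bot\,\th\,1\ge 1$. Since filters are upsets, $F=A$, and hence $\th=\th_F$ is total. Either way, the only point that needs checking is the identity $\bot\cdot a=\bot$, which was already noted in the text before the multiplication table. There is no real obstacle here.

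For the second assertion, let $\alg A$ be nontrivial. Then the identity congruence $\Delta_A$ is proper, and applying the first part to $\th=\Delta_A$ gives $(1,\bot)\notin\Delta_A$, that is, $1\ne\bot$ in $\alg A$.
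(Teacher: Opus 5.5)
Your proof is correct and is essentially the paper's own argument: from $1\mathrel{\th}\bot$ you get $a=1\cdot a\mathrel{\th}\bot\cdot a=\bot$ for every $a$, so $\th$ is the total congruence. The filter-side variant and the derivation of $\bot\ne 1$ from properness of the identity congruence are fine; the only slip is cosmetic: your announced plan speaks of relating every element to $\top$, while the argument (sufficiently) relates every element to $\bot$.
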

\begin{proof} If $(1,\bot) \in \th$, then  for $a \in A$, $a= a\cdot1\mathrel{\th} a\cdot\bot=\bot$. Hence $\th$ is the total congruence.
\end{proof}

For any variety  $\vv V$ of Girard algebras we denote by $ \op{At}(\vv V)$ the set of atoms in $\Lambda(\vv V)$.

\begin{theorem}\label{thm:atomchar}
The assignment $\mathbf S\mapsto\VV(\alg S)$ is a bijection
\[
\bigl\{\text{simple $0$-generated Girard algebras}\bigr\}/{\cong}
\;\longrightarrow\;
 \op{At}(\mathsf{GA}).
\]
Every nontrivial subvariety of $\mathsf{GA}$ contains an atom, i.e. $\Lambda(\mathsf{GA})$ is atomic.
\end{theorem}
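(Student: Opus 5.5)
Three things have to be shown: that $\VV(\alg S)$ is an atom, that every atom comes from exactly one such $\alg S$ (surjectivity and injectivity up to isomorphism), and that every nontrivial subvariety contains an atom. The common tool is Corollary \ref{cor:freezero} together with Lemma \ref{lem:freezero}: for any variety $\vv V$, $\alg F_{\vv V}(\emptyset)$ is the $0$-generated algebra of $\vv V$, and it maps homomorphically onto $\alg B_0$ for every $\alg B\in\vv V$.

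\emph{$\VV(\alg S)$ is an atom.} Let $\alg S$ be simple and $0$-generated, and let $\vv W\subseteq\VV(\alg S)$ be a nontrivial subvariety. Choose a nontrivial $\alg B\in\vv W$. By Corollary \ref{cor:freezero} there is a homomorphism $h:\alg S\to\alg B$ with image $\alg B_0$. Since $\alg B$ is nontrivial, Lemma \ref{lem:nontrivial} gives $\bot\neq 1$ in $\alg B_0$, so $\ker h$ is not total. Simplicity of $\alg S$ then forces $\ker h$ to be the identity, so $\alg S\cong\alg B_0\in\SU(\alg B)\subseteq\vv W$. Hence $\VV(\alg S)\subseteq\vv W$, and $\VV(\alg S)$ is an atom; it is nontrivial because $\alg S$ is.

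\emph{Atomicity and surjectivity.} Let $\vv V$ be a nontrivial variety and set $\alg F=\alg F_{\vv V}(\emptyset)$. This is $0$-generated and nontrivial: $\vv V\models 1\approx\bot$ would give $x=x\cdot 1\approx x\cdot\bot=\bot$, contradicting nontriviality. By Lemma \ref{lem:nontrivial}, every proper congruence of $\alg F$ omits $(1,\bot)$. Consequently the union of a chain of proper congruences is again proper, and Zorn's lemma yields a maximal congruence $\mu$. Equivalently, using the isomorphism $\Con F\cong\op{Fil}(\alg F)$, we may take a maximal filter avoiding $\bot$. Then $\alg S=\alg F/\mu$ is simple, $0$-generated, and lies in $\vv V$, so by the first part $\VV(\alg S)$ is an atom below $\vv V$. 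This proves that $\Lambda(\mathsf{GA})$ is atomic. If $\vv V$ is itself an atom, then $\VV(\alg S)=\vv V$, which gives surjectivity.

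\emph{Injectivity.} Suppose $\VV(\alg S)=\VV(\alg S')$ with both algebras simple and $0$-generated. By Corollary \ref{cor:freezero} there is a homomorphism $\alg S\to\alg S'$ with image $\alg S'_0=\alg S'$, so it is onto. By Lemma \ref{lem:nontrivial} its kernel is proper, hence trivial by simplicity of $\alg S$, and the map is an isomorphism. Alternatively, both algebras are isomorphic to $\alg F_{\VV(\alg S)}(\emptyset)$ by Lemma \ref{lem:freezero}.

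The only step that is not formal is the Zorn argument for atomicity. What makes it work is that nontriviality of a quotient is witnessed by the single constant pair $(1,\bot)$, which gives a compactness-type property: no finite-generation hypothesis is needed, which is exactly what lets us drop finiteness compared to the Jónsson-type argument.
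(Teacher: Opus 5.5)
Your proof is correct and follows the paper's argument essentially step for step: the atom property via the homomorphism $\alg S\to\alg B$ whose kernel must be trivial by simplicity and Lemma \ref{lem:nontrivial}, atomicity via Zorn's lemma on the proper congruences of $\alg F_{\vv V}(\emptyset)$, and injectivity via Lemma \ref{lem:freezero}. Your explicit check that $\alg F_{\vv V}(\emptyset)$ is nontrivial, and your alternative injectivity argument (an onto map $\alg S\to\alg S'$ with proper, hence trivial, kernel), are small valid additions to the paper's version.
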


\begin{proof}
Let $\alg S$ be simple and $0$-generated, let $\vv W$ be nontrivial and let
$\vv W\subseteq\VV(\alg S)$. Choose a nontrivial
$\alg B\in\vv W$. By Corollary \ref{cor:freezero} there is a homomorphism
$g:\alg S\to\alg B$. Its kernel is $0_\alg S$ or $1_\alg S$. It cannot
be $1_\alg S$, because its image is $\alg B_0$, which contains the distinct elements $\bot^{\alg B}$ and
$1^{\alg B}$ by Lemma \ref{lem:nontrivial}. Hence $g$ is injective, so
$\alg S\in\vv W$ and $\vv W =\VV(\alg S)$. Thus
$\VV(\alg S)$ is an atom.

Conversely, let $\vv V$ be a nontrivial subvariety of $\mathsf{GA}$. Then $\alg F_\vv V(\emptyset)$ is nontrivial;  by Lemma \ref{lem:nontrivial} the
union of a chain of proper congruences is proper, so Zorn's lemma gives a
maximal proper congruence. Let $\mathbf S$ be the corresponding simple
quotient of $\alg F_\vv V(\emptyset)$. Then $\mathbf S$ is $0$-generated and belongs to $\vv V$, so,
 by the first part, $\VV(\alg S)$ is an atom contained in $\vv V$. This proves atomicity; if $\vv V$ itself is an atom, then $\VV(\alg S)=\vv V$.

Finally, if two simple $0$-generated algebras $\mathbf S,\mathbf T$ generate
the same variety, Lemma \ref{lem:freezero} identifies both with the
variety's free algebra on the empty set, so $\alg S\cong\alg T$.
\end{proof}

\subsection{Atoms and (relative) spectra}

Since Girard algebras are ideal determined, we can use filters instead of congruences (see Section \ref{sec:preliminaries}). If $\alg A$ is a Girard algebra and $F$ is a filter of $\alg A$ we will simply write $\alg A/F$ for $\alg A/\th_F$.

\begin{lemma}\label{lem:relativespectra} Let $\vv V$ be any variety of Girard algebras. Then the function  $F \longmapsto \VV(\alg F_\vv V(\emptyset)/F)$ is a bijection between the
set of maximal filters of $\alg F_\vv V(\emptyset)$ (which from now on we denote by $\op{MaxFil}(\alg F_\vv V(\emptyset))$) and $\op{At}(\vv V)$.

Consequently if $\alg A$ is a Girard algebra then the mapping $F \longmapsto \VV(\alg A_0/F)$ is a bijection between $ \op{MaxFil}(\alg A_0)$ and $\op{At}(\VV(\alg A))$.
\end{lemma}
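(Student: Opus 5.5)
Write $\alg F=\alg F_\vv V(\emptyset)$. The plan is to reduce the statement to Theorem \ref{thm:atomchar} together with the correspondence between filters and congruences. Since $\Con F\cong\op{Fil}(\alg F)$, maximal filters $F$ of $\alg F$ correspond exactly to maximal proper congruences $\th_F$. For these, $\alg F/F$ is simple, and it is $0$-generated because it is a quotient of a $0$-generated algebra. It also lies in $\vv V$. By Theorem \ref{thm:atomchar}, $\VV(\alg F/F)$ is then an atom of $\Lambda(\mathsf{GA})$ contained in $\vv V$, hence an atom of $\Lambda(\vv V)$. So the map is well defined into $\op{At}(\vv V)$.

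For surjectivity, let $\vv W\in\op{At}(\vv V)$. Then $\vv W$ is an atom of $\Lambda(\mathsf{GA})$, since the subvarieties of $\vv V$ form an initial segment. By Theorem \ref{thm:atomchar}, $\vv W=\VV(\alg S)$ for a simple $0$-generated $\alg S$. Since $\alg S\in\vv V$ and $\alg F$ is free in $\vv V$ on no generators, there is a homomorphism $\alg F\to\alg S$. It is onto because $\alg S$ is $0$-generated. Its kernel is a maximal congruence because $\alg S$ is simple and nontrivial. The corresponding filter $F$ then gives $\alg F/F\cong\alg S$.

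For injectivity, suppose $\VV(\alg F/F)=\VV(\alg F/G)$. The last part of Theorem \ref{thm:atomchar} gives $\alg F/F\cong\alg F/G$. The remaining step is to pass from isomorphic quotients to equal filters, and this is where some care is needed. Both quotient maps are homomorphisms out of $\alg F$, and the composite of $\alg F\to\alg F/F$ with the isomorphism is a homomorphism $\alg F\to\alg F/G$. By Corollary \ref{cor:freezero}, applied to the $0$-generated algebra $\alg F$ and $\alg F/G\in\vv V=\VV(\alg F)$, there is a unique such homomorphism. Hence this composite equals the quotient map onto $\alg F/G$, so the kernels coincide, $\th_F=\th_G$, and $F=G$.

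For the consequence, take $\vv V=\VV(\alg A)$. Lemma \ref{lem:freezero} gives $\alg A_0\cong\alg F_{\VV(\alg A)}(\emptyset)$. Transporting maximal filters along this isomorphism yields the bijection $F\mapsto\VV(\alg A_0/F)$ between $\op{MaxFil}(\alg A_0)$ and $\op{At}(\VV(\alg A))$.
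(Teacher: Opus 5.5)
Your proof is correct and follows the paper's own argument essentially step for step: Theorem \ref{thm:atomchar} for well-definedness and for the isomorphism of the two quotients, the unique homomorphism out of the free algebra for surjectivity and for the equality of the kernels, and Lemma \ref{lem:freezero} for the consequence. There is one small slip in the injectivity step. The equality $\vv V=\VV(\alg F)$ is false in general: for $\vv V$ generated by the three-element {\L}ukasiewicz chain one gets $\alg F\cong\alg{BA}_2$, which generates only Boolean algebras. You do not need that equality, however, since $\alg F/G\in\HH(\alg F)\sse\VV(\alg F)$; alternatively, $\alg F/G\in\vv V$ and $\alg F$ is free in $\vv V$.
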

\begin{proof}
Every maximal filter gives a simple $0$-generated quotient, hence an atom by Theorem \ref{thm:atomchar}. Conversely, the simple $0$-generated generator $\alg S$ of any atom below $\vv V$ is the image of the unique homomorphism $\alg F_\vv V(\emptyset)\to\alg S$, whose kernel is maximal. If two maximal filters give the same atom, their quotients are isomorphic by Theorem \ref{thm:atomchar}; uniqueness of the homomorphism from the free algebra then makes the two kernels, and hence the filters, equal.

The second claim follows from Lemma \ref{lem:freezero}.
\end{proof}

Thus if $\alg A_0$ is simple, $\VV(\alg A)$ has the unique atom
$\VV(\alg A_0)$; if $\alg A_0$ is finite, the number of atoms below $\VV(\alg A)$
is $|\op{MaxFil}(\alg A_0)|$.

We can improve Lemma \ref{lem:relativespectra} to a uniform characterization.
Let  $\alg F = \alg F_{\mathsf{GA}}(\emptyset)$; for a subvariety $\vv V$, let $\pi_\vv V$ be the canonical homomorphism $\alg F \to \alg F_\vv V(\emptyset)$ and let
$G_\vv V$ be the filter of $\alg F$ corresponding to the kernel of $\pi_\vv V$. Then for closed terms\footnote{ As usual, a term is {\bf closed} if it contains no variables; equivalently, its value is independent of the valuation.} $s,t$, $(s^\alg F,t^\alg F) \in \th_{G_\vv V}$ if and only if $\vv V \vDash s \app t$ and $\alg F/G_\vv V \cong \alg F_\vv V(\emptyset)$.

\begin{theorem}\label{thm:onespectrum}
For every variety $\vv V$ of Girard algebras,
\[
\op{At}( \vv V)=
\{\VV(\alg F/G):
G\in\op{MaxFil}(\alg F),\
G_\vv V\subseteq G\},
\]
and distinct $G\in\op{MaxFil}(\alg F)$ give distinct atoms.
\end{theorem}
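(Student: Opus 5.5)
The plan is to reduce the statement to Lemma \ref{lem:relativespectra} through the correspondence theorem for filters. By the discussion preceding the theorem, $\pi_\vv V$ is surjective with kernel $\th_{G_\vv V}$, so $\alg F/G_\vv V\cong\alg F_\vv V(\emptyset)$. Since $\Con A$ and $\op{Fil}(\alg A)$ are isomorphic lattices, the standard correspondence for congruences gives an order isomorphism
\[
\{G\in\op{Fil}(\alg F): G_\vv V\subseteq G\}\longrightarrow \op{Fil}(\alg F_\vv V(\emptyset)),\qquad G\longmapsto \pi_\vv V[G].
\]
Its inverse is $H\mapsto\pi_\vv V^{-1}[H]$, and we have $\alg F/G\cong\alg F_\vv V(\emptyset)/\pi_\vv V[G]$ by the second isomorphism theorem. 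One check is needed here: filters correspond to congruences, and the congruence attached to $\pi_\vv V[G]$ is the image of $\th_G$. This is routine, since $\th\mapsto A^+/\th$ commutes with surjective homomorphisms once $\ker\pi_\vv V\subseteq\th$.

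Under this isomorphism, the maximal filters of $\alg F$ containing $G_\vv V$ correspond exactly to the maximal (proper) filters of $\alg F_\vv V(\emptyset)$. Here I use that a filter of $\alg F$ containing $G_\vv V$ is maximal in $\alg F$ iff it is maximal among the filters above $G_\vv V$, because any filter above it automatically contains $G_\vv V$. Lemma \ref{lem:relativespectra} then says that $H\mapsto\VV(\alg F_\vv V(\emptyset)/H)$ is a bijection onto $\op{At}(\vv V)$. Composing the two bijections, $G\mapsto \VV(\alg F/G)$ maps $\{G\in\op{MaxFil}(\alg F): G_\vv V\subseteq G\}$ bijectively onto $\op{At}(\vv V)$, which is the displayed equality.

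For the final claim, that distinct maximal filters give distinct atoms, I will apply Lemma \ref{lem:relativespectra} with $\vv V=\mathsf{GA}$. In that case $\alg F_\vv V(\emptyset)=\alg F$ and $G_\vv V=\{a\in F:a\ge1\}$, the least filter, and injectivity of $G\mapsto\VV(\alg F/G)$ on all of $\op{MaxFil}(\alg F)$ is exactly the injectivity part of that lemma.

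The only mildly delicate point is the identification of quotients. One must be sure that $\VV(\alg F/G)$ is literally the atom attached by Lemma \ref{lem:relativespectra} to $\pi_\vv V[G]$. That holds because the quotients are isomorphic, and isomorphic algebras generate the same variety. One should also note that $G$ proper implies $\pi_\vv V[G]$ proper, which follows from the correspondence since $\alg F/G$ is nontrivial.
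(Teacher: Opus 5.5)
Your proposal is correct and is essentially the paper's argument. The paper likewise identifies the maximal filters of $\alg F/G_\vv V\cong\alg F_\vv V(\emptyset)$ with the maximal filters of $\alg F$ containing $G_\vv V$ and then invokes Lemma~\ref{lem:relativespectra}, and your treatment of distinctness via the case $\vv V=\mathsf{GA}$ of that lemma is what that appeal amounts to; you simply spell out the correspondence-theorem details the paper leaves implicit.
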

\begin{proof} Maximal filters of
$\alg F /G_\vv V$ are exactly the
$G\in\op{MaxFil}(\alg  F)$ containing $G_{\vv V}$.
The conclusion follows by applying Lemma \ref{lem:relativespectra} .
\end{proof}

The preceding description uses only identities between closed terms. The
next theorem shows that every congruence of $\alg F$ can occur in this way;
thus there is no further restriction on these closed-term identifications.

\begin{theorem}\label{thm:realization}
For every $\th\in\Con F$, put $\vv W_\th=\VV(\alg F/\th)$. Then
\[
\th_{G_{\vv W_\th}}=\th,
\qquad F_{\vv W_\th}(\emptyset)\cong\alg F/\th.
\]
Equivalently, for every filter $D$ of $\alg F$,
$G_{\VV(\alg F/D)}=D$.
\end{theorem}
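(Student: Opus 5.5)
The plan is to apply Lemma \ref{lem:freezero} to the algebra $\alg A=\alg F/\th$ and then compare the two resulting descriptions of the kernel of $\pi_{\vv W_\th}$. The first step is to observe that $\alg F/\th$ is $0$-generated. It is a homomorphic image of $\alg F=\alg F_{\mathsf{GA}}(\emptyset)$, which is generated by the constants, so $(\alg F/\th)_0=\alg F/\th$. Lemma \ref{lem:freezero} (equivalently Corollary \ref{cor:freezero}) then gives
\[
\alg F/\th\cong \alg F_{\VV(\alg F/\th)}(\emptyset)=\alg F_{\vv W_\th}(\emptyset),
\]
which is already the second assertion.

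For the first assertion we compare kernels, not isomorphism types. By definition $\th_{G_{\vv W_\th}}=\ker(\pi_{\vv W_\th})$. Take $a,b\in F$ and write them as $a=s^{\alg F}$, $b=t^{\alg F}$ for closed terms $s,t$. By the remark preceding Theorem \ref{thm:onespectrum}, $(a,b)\in\th_{G_{\vv W_\th}}$ if and only if $\vv W_\th\vDash s\app t$. Since $s,t$ are closed, this holds if and only if $\alg F/\th\vDash s\app t$, that is, $s^{\alg F/\th}=t^{\alg F/\th}$. The natural map $\alg F\to\alg F/\th$ is a homomorphism, so $s^{\alg F/\th}=a/\th$ and $t^{\alg F/\th}=b/\th$. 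Hence the condition is exactly $(a,b)\in\th$, and so $\th_{G_{\vv W_\th}}=\th$.

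Two routine points need care. First, a closed identity holding in a generator holds in the whole variety it generates, and conversely; this is what lets us replace $\vv W_\th$ by $\alg F/\th$. Second, the case where $\th$ is the total congruence is harmless: $\vv W_\th$ is then the trivial variety, $G_{\vv W_\th}=F$, and the argument above goes through unchanged.

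The filter formulation follows from the lattice isomorphism $\op{Fil}(\alg F)\cong\Con F$ of Section \ref{sec:preliminaries}. Put $\th=\th_D$; then $\th_{G_{\VV(\alg F/D)}}=\th_D$, and since $F\mapsto\th_F$ is injective, $G_{\VV(\alg F/D)}=D$. I do not expect a real obstacle here. The only thing to watch is that the proof establishes equality of the two congruences on $\alg F$, and not merely an abstract isomorphism of the quotients. The closed-term computation above handles this directly.
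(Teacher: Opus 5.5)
Your proposal is correct and follows essentially the same route as the paper: you identify $\ker(\pi_{\vv W_\th})$ with $\th$ by comparing the values of closed terms in $\alg F/\th$, and you get the filter version from the congruence--filter correspondence. The only difference is cosmetic: you obtain $\alg F_{\vv W_\th}(\emptyset)\cong\alg F/\th$ from Lemma~\ref{lem:freezero} applied to the $0$-generated algebra $\alg F/\th$, whereas the paper reads it off the canonical quotient map once the kernels agree; both rest on the same closed-term computation.
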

\begin{proof}
For closed terms $s,t$, the identity $s\app t$ holds in
$\VV(\alg F/\th)$ if and only if it holds in $\alg F/\th$, which is
equivalent to $(s^{\alg F},t^{\alg F})\in\th$. Every element of $F$ is
the value of a closed term, so $\th_{G_{\vv W_\th}}=\th$. The asserted
isomorphism follows from the canonical quotient map. The filter formulation
follows from the congruence--filter correspondence.
\end{proof}

\subsection{Atom supports and infinite joins}\label{sec:atomsupport}

The maximal-filter description also explains how atoms behave under joins
of varieties. From now on   $\alg F = \alg F_\mathsf{GA}(\emptyset)$ and   $X=\op{MaxFil}(\alg F)$; in view of Lemma \ref{lem:relativespectra} we will
often refer to $X$ as the {\bf atom spectrum}. For a filter $D$ of $\alg F$
\[
T(D)=\{M\in X:D\sse M\}.
\]
These sets are the closed sets of a topology $\mathcal T$ on $X$. Since $T(M)=\{M\}$ for $M\in X$, $\mathcal T$
is $T_1$ (every singleton is closed).

Now every maximal filter is meet-irreducible but
$\op{Fil}(\alg F)$ is distributive, since it is isomorphic to
$\Con F$. Hence every  maximal proper filter is meet-prime.
Consequently
\[
T\!\left(\bigvee_iD_i\right)=\bigcap_iT(D_i),
\qquad T(D\cap E)=T(D)\cup T(E).
\]
 where the join on the left-hand side is taken in the lattice of filters of $\alg F$.
Also $T(F^+)=X$ and $T(F)=\emptyset$, where $F^+=\{a\in F:1\le a\}$
is the least filter of $\alg F$.  For $Y\sse X$ its closure is
\begin{equation}\label{eq:hullclosure}
\overline Y=T\!\left(\bigcap_{M\in Y}M\right),\tag{a}
\end{equation}
with the empty intersection interpreted as $F$: a closed set $T(D)$
contains $Y$ exactly when $D\sse\bigcap_{M\in Y}M$.

\begin{theorem}\label{thm:atomsupport} Let $\alg F,X,T$ and $\mathcal T$ as above; then the following hold.
\begin{enumerate}
\item The map $\op{At}(\vv V) \longmapsto \{M \in X: \VV(\alg F/M)\in\op{At}(\vv V)\}$ is a bijection between $\{\op{At}(\vv V): \vv V \sse \mathsf{GA}\}$ and
the set of $\mathcal T$-closed subsets of $X$, so the two sets can be identified.
\item Under this identification, the map sending $\vv V \longmapsto \op{At}(\vv V)$ is a surjective and meet-complete lattice homomorphism from $\Lambda(\mathsf{GA})$ onto the lattice $\op{Cl}(X)$ of
$\mathcal T$-closed subsets of $X$.
\item For every family of atoms $(\vv A_i)_{i\in I}$,
\begin{equation}\label{eq:atomjoinclosure}
\op{At}\!\left(\bigvee_{i\in I}\vv A_i\right)
=\overline{\{\vv A_i:i\in I\}}. \tag{b}
\end{equation}
\end{enumerate}
\end{theorem}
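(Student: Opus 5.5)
The plan rests on one identification. By Theorem \ref{thm:onespectrum}, for every variety $\vv V$ the set of atoms below $\vv V$ corresponds to $T(G_\vv V)=\{M\in X: G_\vv V\sse M\}$, and distinct maximal filters give distinct atoms. So the assignment in (1) sends $\op{At}(\vv V)$ to $T(G_\vv V)$. This set is closed by definition of $\mathcal T$, and the assignment is injective because $M\mapsto\VV(\alg F/M)$ is injective. For surjectivity, take any closed set $T(D)$ with $D$ a filter of $\alg F$. Theorem \ref{thm:realization} gives $G_{\VV(\alg F/D)}=D$, so $\op{At}(\VV(\alg F/D))$ corresponds to $T(D)$. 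This proves (1).

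For (2) I first record how $\vv V\mapsto G_\vv V$ behaves. It is order reversing: a larger variety satisfies fewer closed identities. It turns arbitrary meets (intersections) of varieties into joins of filters. Indeed, a closed identity holds in $\bigcap_i\vv V_i$ iff it is derivable from the union of the equational theories, and restricted to closed terms this is the congruence generated by the $\th_{G_{\vv V_i}}$. To justify this I would pass through $\alg F_{\bigcap\vv V_i}(\emptyset)$: since $\alg F/\bigvee_i G_{\vv V_i}$ is a quotient of each $\alg F/G_{\vv V_i}\in\vv V_i$, it lies in every $\vv V_i$. Hence $G_{\bigcap\vv V_i}\sse\bigvee_i G_{\vv V_i}$, and the reverse inclusion comes from monotonicity. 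Then $\op{At}(\bigcap_i\vv V_i)=T(\bigvee_i G_{\vv V_i})=\bigcap_i T(G_{\vv V_i})$, which gives meet-completeness.

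For binary joins, $G_{\vv V\vee\vv W}=G_\vv V\cap G_\vv W$. A closed identity holds in $\vv V\vee\vv W$ iff it holds in both, because the join is $\HH\SU\PP$ of the union and identities are preserved by these operators. Hence $\op{At}(\vv V\vee\vv W)=T(G_\vv V\cap G_\vv W)=T(G_\vv V)\cup T(G_\vv W)$, using meet-primeness of maximal filters. Surjectivity onto $\op{Cl}(X)$ is already part of (1).

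For (3), take $\vv A_i=\VV(\alg F/M_i)$, so that $G_{\vv A_i}=M_i$ by Theorem \ref{thm:realization}. The same argument as for binary joins works for arbitrary joins of varieties: a closed identity holds in $\bigvee_i\vv A_i$ iff it holds in every $\vv A_i$. This gives $G_{\bigvee\vv A_i}=\bigcap_i M_i$, and then $\op{At}(\bigvee_i\vv A_i)=T(\bigcap_i M_i)=\overline{\{M_i\}}$ by \eqref{eq:hullclosure}. Note that only the equality of filters $G$ is needed here, not meet-primeness, so infinite joins cause no trouble.

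The main obstacle is the meet case. There one must check that the closed-term part of the theory of the intersection is exactly the filter join. The quotient argument above is what I would rely on, since it avoids any syntactic analysis of derivations.
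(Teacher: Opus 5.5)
Your proposal is correct and follows the paper's proof. Part (1) comes from Theorems~\ref{thm:onespectrum} and~\ref{thm:realization}, binary joins from $G_{\vv V\vee\vv W}=G_{\vv V}\cap G_{\vv W}$ plus meet-primeness of maximal filters, and (3) from $G_{\bigvee_i\vv A_i}=\bigcap_i M_i$ together with the closure formula (a). The only divergence is the meet case: the paper treats it as immediate from the definition of $\op{At}$ (an atom lies below $\bigcap_i\vv V_i$ iff it lies below every $\vv V_i$), so your correct computation $G_{\bigcap_i\vv V_i}=\bigvee_i G_{\vv V_i}$ is a harmless detour rather than the main obstacle.
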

\begin{proof}
For (1),
Theorem~\ref{thm:onespectrum} gives
$\op{At}(\vv V)=T(G_{\vv V})$. Conversely, Theorem~\ref{thm:realization}
gives $G_{\VV(\alg F/D)}=D$, so every closed set $T(D)$ occurs and the map is a bijection.

For (2) observe that
a closed identity holds in a join of varieties if and only if it holds
in every member of the family. Thus, using the congruence--filter
correspondence,
\begin{equation}\label{eq:joinfilters}
G_{\bigvee_i\vv V_i}=\bigcap_iG_{\vv V_i}.\tag{c}
\end{equation}
We have to show that
\[
\op{At}(\vv U\vee\vv V)=\op{At}(\vv U)\cup\op{At}(\vv V),
\qquad
\op{At}\!\left(\bigcap_i\vv V_i\right)=\bigcap_i\op{At}(\vv V_i).
\]
The first follows from~\eqref{eq:joinfilters} and the finite-union formula
for $T$; the second follows directly from the definition of
$\op{At}$.

Finally for (3) we observe that, if $\vv A_i=\VV(\alg F/M_i)$ is an atom, then
$G_{\vv A_i}=M_i$. Hence the left-hand side of
\eqref{eq:atomjoinclosure} is $T(\bigcap_iM_i)$, which is the required
closure by~\eqref{eq:hullclosure}.
\end{proof}

As a consequence, if $Y\sse X$ and $\alg A_M=\alg F/M$ for $M\in Y$, then, under the identification above,
\[
\op{At}\!\left(\bigvee_{M\in Y}\VV(\alg A_M)\right)=\overline{Y}.
\]
In particular, if $Y$ is closed, the atoms in this variety are exactly $\{\VV(\alg A_M):M\in Y\}$. Since $\mathcal T$ is $T_1$, every finite subset of $X$ is closed; hence finite joins introduce no new atoms. For an infinite $Y$, any additional atoms are precisely the points of $\overline{Y}\setminus Y$. The cyclic algebras in Section \ref{subsec:modular atoms} provide an explicit example (Theorem~\ref{thm:cycliclimit}).

\subsection{The shape of $\Lambda(\mathsf{GA})$}

So to describe all the atoms in $\Lambda(\mathsf{GA})$ is sufficient to describe all the maximal filters of $\alg F$.
This however is not easy in principle; $\alg F$ is huge and we do not have a sensible description so far.

Therefore to determine the shape of the bottom of the lattice $\Lambda(\mathsf{GA})$ we resort to more traditional methods. In particular we use J\'onsson's Lemma
to ``climb'' the lattice $\Lambda(\mathsf{GA})$.  A Girard algebra $\alg A$ is {\bf integral} if $1=\top$ (and thus $0=\bot$); a Girard algebra $\alg A$ is {\bf degenerate} if $\alg A\vDash 0=1$.

\begin{proposition}\label{prop:twoboundaries} Let $\alg A$ be a nontrivial Girard algebra.
\begin{enumerate}
\item If $\alg A$ is integral, then $\alg A_0$ is the two element Boolean algebra $\alg {BA}_2$; hence the variety of Boolean algebras is the only atom in $\Lambda(\mathsf{GA})$ consisting of integral Girard algebras.
\item If $\alg A$ is degenerate, then $\alg A_0$ is the three element algebra whose universe is $\{\bot,1,\top\}$ which is denoted by $\alg K_3$; hence
$\VV(\alg K_3)$ is the only atom in  $\Lambda(\mathsf{GA})$ consisting of degenerate Girard algebras.
\end{enumerate}
\end{proposition}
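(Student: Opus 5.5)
For both parts I will compute $\alg A_0$ directly, by closing the constants $\{\bot,0,1,\top\}$ under the operations. Then I read off the atom statement from Theorem \ref{thm:atomchar} and Lemma \ref{lem:freezero}. Recall that $a\imp b=\simA(a\cdot\simA b)$, so it suffices to check closure under $\join,\meet,\cdot,\simA$.

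For (1), suppose $1=\top$. Then $0=\simA 1=\simA\top=\bot$, so the constants are just $\{\bot,\top\}$. By Lemma \ref{lem:nontrivial}, $\bot\ne 1$, so these are two distinct elements. The set $\{\bot,\top\}$ is closed under all operations: lattice operations are clear, $\top\cdot\top=1\cdot 1=1=\top$, $\bot\cdot a=\bot$, and $\simA$ swaps the two elements. Hence $\alg A_0\cong\alg{BA}_2$.

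For the atom claim in (1), $\alg{BA}_2$ is simple and $0$-generated, so $\VV(\alg{BA}_2)$, the Boolean algebras (with $0=\bot$, $1=\top$), is an atom by Theorem \ref{thm:atomchar}. Now let $\VV(\alg S)$ be an atom consisting of integral algebras, with $\alg S$ simple and $0$-generated. Then $\alg S$ is nontrivial and integral, so $\alg S=\alg S_0\cong\alg{BA}_2$. Uniqueness then follows from the bijection in Theorem \ref{thm:atomchar}.

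For (2), suppose $0=1$. The constants are $\bot,1,\top$. These are pairwise distinct: $\bot\ne 1$ by Lemma \ref{lem:nontrivial}, and $1=\top$ would give $\bot=\simA\top=\simA 1=0=1$. So we have the chain $\bot<1<\top$. For closure I need the products $0\cdot 0$ and $0\cdot\top$, the unknown entries of the table above. With $0=1$ these are $1\cdot 1=1$ and $1\cdot\top=\top$. The remaining products are $\top\cdot\top$ and those involving $\bot$.

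The one genuine computation is $\top\cdot\top=\top$. It follows from $\top=1\cdot\top\le\top\cdot\top\le\top$, using $1\le\top$ and monotonicity. For negation, $\simA$ swaps $\bot$ and $\top$ and fixes $1=0$. Since $\{\bot,1,\top\}$ is a chain, it is closed under $\join$ and $\meet$. So $\alg A_0$ is the three-element algebra $\alg K_3$, and its operation table is independent of $\alg A$.

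To conclude (2), I must check that $\alg K_3$ is simple. A filter must contain $1$ and hence is $\{1,\top\}$ or all of $\alg K_3$. The only ones are $\{1,\top\}$, which is $A^+$ and corresponds to the identity congruence, and the whole algebra. So $\alg K_3$ is simple and $0$-generated, and $\VV(\alg K_3)$ is an atom. Uniqueness among atoms of degenerate algebras follows exactly as in (1). The main point needing care is this simplicity check, together with the nondegeneracy arguments that keep the constants distinct; everything else is a finite table computation.
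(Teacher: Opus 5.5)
Your proposal is correct and follows essentially the same route as the paper: you compute $\alg A_0$ by closing the constants under $\join,\meet,\cdot,\simA$, with $\top=1\cdot\top\le\top^2\le\top$ as the only real computation, then check that $\alg K_3$ is simple and read off the atom statements from Theorem~\ref{thm:atomchar}. The differences are cosmetic: you make explicit, via Lemma~\ref{lem:nontrivial}, that the constants are distinct (the paper leaves this implicit), and you verify simplicity through the filter lattice rather than by collapsing $1$ with $\bot$ or $\top$.
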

\begin{proof} For (1), since $1=\top$, also $0=\simA1=\simA\top=\bot$, so $\alg A_0=\{0,1\}\cong\alg{BA}_2$.

 For (2), put $e:=0=1$. Then
$\simA e=e$, $\simA\bot=\top$, $\simA\top=\bot$, and $\bot$ is absorbing.
Moreover
\[
\top=\top e\le\top^2\le\top,
\]
so $\top^2=\top$. Hence $\{\bot,e,\top\}$ is closed under the lattice
operations, multiplication and involution, and therefore also under $\imp$,
since $x\imp y=\simA(x\cdot\simA y)$. Thus
$\alg A_0\cong\alg K_3$. Moreover $\alg K_3$ is simple, since identifying
$1$ with either $\top$ or $\bot$ forces the identification of $\bot$ and
$\top$. It follows that $\VV(\alg K_3)$ is an atom and it is the unique atom
below $\VV(\alg A)$.

\end{proof}

\begin{remark} Integral Girard algebras form the equivalent algebraic semantics of the multiplicative/additive fragment of Affine Logic $\cc {LLW}$, which is $\cc {LL}$ plus the structural rule of  weakening.  This is because the algebraic translation of the weakening rule is $x \le y \imp x$ and, for Girard algebras, this is clearly equivalent to being integral. Integral Girard algebras are really equivalent to {\rm involutive $\mathsf{FL}_{ew}$ algebras}; it is well known \cite{KowalskiOno2006} that the equational theory of such algebras is decidable so the corresponding extension of $\cc{MALL}$  is a decidable logic.
\end{remark}

\begin{remark} $\alg K_3$ is the three element {\bf bounded Kalman lattice} (see \cite{AglianoMarcos2022}). For an axiomatization of bounded K-lattices see again \cite{AglianoMarcos2022}. Every bounded K-lattice can be viewed as a degenerate Girard algebra, since its dualizing element coincides with the monoid identity, which in the Girard signature is exactly the identity $0=1$. However not every degenerate Girard algebra is a bounded Kalman lattice; in the three element case it works because of cardinality constraints, but we will see shortly that for four elements this is no longer the case. Thus the variety of degenerate Girard algebras is strictly larger than the variety $\vv{BKL}$ investigated in \cite{AglianoMarcos2022}.
\end{remark}

\subsubsection{Four-element algebras}
Since there is only one 2-element Girard algebra and only one nonintegral 3-element Girard algebra, we turn to 4-element Girard algebras.
It is  obvious that any 4-element Girard algebra has either the 4-element chain or the 4-element diamond as underlying lattice. If we remove the integral ones, we are left with six Girard algebras. Two are the chains
in Figure \ref{figure:1}.

\begin{figure}[h]
\begin{center}
\begin{tikzpicture}
\draw (0,0) -- (0,2);
\draw[fill] (0,0) circle [radius=0.05];
\draw[fill] (0,0.65) circle [radius=0.05];
\draw[fill] (0,1.3) circle [radius=0.05];
\draw[fill] (0,2) circle [radius=0.05];
\node[right]  at (0,0) {\footnotesize $\bot$};
\node[right]  at (0,0.65) {\footnotesize $1$};
\node[right]  at (0,1.3) {\footnotesize $0$};
\node[right]  at (0,2) {\footnotesize $\top = 0^2 =0\top$};
\draw (3,0) -- (3,2);
\draw[fill] (3,0) circle [radius=0.05];
\draw[fill] (3,0.65) circle [radius=0.05];
\draw[fill] (3,1.3) circle [radius=0.05];
\draw[fill] (3,2) circle [radius=0.05];
\node[right]  at (3,0) {\footnotesize $\bot$};
\node[right]  at (3,0.65) {\footnotesize $0=0^2$};
\node[right]  at (3,1.3) {\footnotesize $1$};
\node[right]  at (3,2) {\footnotesize $\top=0\top$};
\node at (0,-.5){\footnotesize $\alg C_4$};
\node at (3,-.5){\footnotesize $\alg C'_4$};
\end{tikzpicture}
\end{center}
\caption{4-element Girard chains}\label{figure:1}
\end{figure}

It is easy to check that $\alg C'_4$ is not simple; identifying $0$ and $1$ gives its only proper nontrivial quotient, isomorphic to $\alg K_3$. Since $\alg C'_4$ is $0$-generated, it has no proper subalgebras; hence J\'onsson's Lemma shows that $\VV(\alg C'_4)$ covers $\VV(\alg K_3)$. This cover is not a variety of Kalman lattices.

On the other hand $\alg C_4$ is strictly simple, so $\VV(\alg C_4)$ is an atom in $\Lambda(\mathsf{GA})$.

The two  Girard diamonds in which $0\ne 1$ are in Figure \ref{figure:2}.
It is routine to check that they are both simple and $0$-generated so they both generate atoms.

\begin{figure}[h]
\begin{center}
\begin{tikzpicture}
\draw (0,0) -- (-1,1) -- (0,2) -- (1,1) -- (0,0);
\draw[fill] (0,0) circle [radius=0.05];
\draw[fill] (-1,1) circle [radius=0.05];
\draw[fill] (0,2) circle [radius=0.05];
\draw[fill] (1,1) circle [radius=0.05];
\node[right]  at (0,0) {\footnotesize $\bot$};
\node[right]  at (1,1) {\footnotesize $0$};
\node[left]  at (-1,1) {\footnotesize $1$};
\node[right]  at (0,2) {\footnotesize $\top=0^2=0\top$};
\draw (5,0) -- (4,1) -- (5,2) -- (6,1) -- (5,0);
\draw[fill] (5,0) circle [radius=0.05];
\draw[fill] (4,1) circle [radius=0.05];
\draw[fill] (5,2) circle [radius=0.05];
\draw[fill] (6,1) circle [radius=0.05];
\node[right]  at (5,0) {\footnotesize $\bot$};
\node[right]  at (6,1) {\footnotesize $0$};
\node[left]  at (4,1) {\footnotesize $0^2=1$};
\node[right]  at (5,2) {\footnotesize $\top=0\top$};
\node at (0,-.5){\footnotesize $\alg D_1$};
\node at (5,-.5){\footnotesize $\alg D_2$};
\end{tikzpicture}
\end{center}
\caption{4-element Girard diamonds, where $0\ne1$}\label{figure:2}
\end{figure}

The two degenerate Girard diamonds are in Figure \ref{figure:2deg}.
The first one is the Kalman lattice $\alg K_4$, while the second is not a Kalman lattice. Both are simple and have $\alg K_3$ as their only proper subalgebra; hence J\'onsson's Lemma shows that both generate covers of $\VV(\alg K_3)$.

\begin{figure}[h]
\begin{center}
\begin{tikzpicture}
\draw (0,0) -- (-1,1) -- (0,2) -- (1,1) -- (0,0);
\draw[fill] (0,0) circle [radius=0.05];
\draw[fill] (-1,1) circle [radius=0.05];
\draw[fill] (0,2) circle [radius=0.05];
\draw[fill] (1,1) circle [radius=0.05];
\node[right]  at (0,0) {\footnotesize $\bot =a^2$};
\node[right]  at (1,1) {\footnotesize $a=a\top$};
\node[left]  at (-1,1) {\footnotesize $0=1$};
\node[right]  at (0,2) {\footnotesize $\top$};
\draw (5,0) -- (4,1) -- (5,2) -- (6,1) -- (5,0);
\draw[fill] (5,0) circle [radius=0.05];
\draw[fill] (4,1) circle [radius=0.05];
\draw[fill] (5,2) circle [radius=0.05];
\draw[fill] (6,1) circle [radius=0.05];
\node[right]  at (5,0) {\footnotesize $\bot$};
\node[right]  at (6,1) {\footnotesize $a$};
\node[left]  at (4,1) {\footnotesize $0=1 =a^2$};
\node[right]  at (5,2) {\footnotesize $\top=a\top$};
\node at (0,-.5){\footnotesize $\alg K_4$};
\node at (5,-.5){\footnotesize $\alg L_4$};
\end{tikzpicture}
\end{center}
\caption{4-element Girard diamonds, where $0=1$}\label{figure:2deg}
\end{figure}

\subsubsection{Five elements and beyond}

 There are five nonisomorphic lattices with five elements: the $5$-element chain, $\alg M_3$, $\alg N_5$, the $5$-element lattice with one atom and the $5$-element lattice with one coatom (the latter two are sometimes called {\em kites}). Since $\simA$ is an order-reversing involution, the underlying lattice of a Girard algebra is self-dual. This excludes the two kites, so the underlying lattice of a $5$-element Girard algebra is either a chain, $\alg N_5$, or $\alg M_3$. The possible Girard-algebra structures can be checked by hand using the description of residuated lattices up to size $6$ in \cite{GalatosJipsen}, or else through a computer program such as Mace4.

There are exactly 18 nonintegral 5-element Girard algebras, 6 of which are degenerate. The nondegenerate chain-based ones are in Figure \ref{figure:3}. $\alg A_2$ is not simple; it is easy to check that if $\th$ is the congruence generated by $(0,1)$ then $\alg A_2/\th \cong \alg K_3$.   Moreover its only proper subalgebra is $\alg C'_4$ and its only proper nontrivial quotient is $\alg K_3$, so by J\'onsson's Lemma it generates a cover of $\VV(\alg C'_4)$.  $\alg A_{1,1}$ and $\alg A_{1,2}$ are simple and have $\alg C_4$ as their only proper subalgebra, so by J\'onsson's Lemma they both generate covers of $\VV(\alg C_4)$. On the other hand  $\alg C_5$ is simple: its least filter is $\{1,\top\}$, and every larger filter contains $a$, whence $a^2=\bot$ forces the filter to be the whole algebra. It has no proper subalgebras (since $0\top=a$), so it is strictly simple and generates an atom.

\begin{figure}[h]
\begin{center}
\begin{tikzpicture}
\draw (0,0) -- (0,2.7);
\draw[fill] (0,0) circle [radius=0.05];
\draw[fill] (0,0.65) circle [radius=0.05];
\draw[fill] (0,1.3) circle [radius=0.05];
\draw[fill] (0,2) circle [radius=0.05];
\draw[fill] (0,2.7) circle [radius=0.05];
\node[right]  at (0,0) {\footnotesize $\bot$};
\node[right]  at (0,0.65) {\footnotesize $0= 0^2=a^2$};
\node[right]  at (0,1.3) {\footnotesize $a$};
\node[right]  at (0,2) {\footnotesize $1$};
\node[right]  at (0,2.7) {\footnotesize $\top =0\top$};
\draw (3,0) -- (3,2.7);
\draw[fill] (3,0) circle [radius=0.05];
\draw[fill] (3,0.65) circle [radius=0.05];
\draw[fill] (3,1.3) circle [radius=0.05];
\draw[fill] (3,2) circle [radius=0.05];
\draw[fill] (3,2.7) circle [radius=0.05];
\node[right]  at (3,0) {\footnotesize $\bot=0^2=a^2$};
\node[right]  at (3,0.65) {\footnotesize $0$};
\node[right]  at (3,1.3) {\footnotesize $a=0\top$};
\node[right]  at (3,2) {\footnotesize $1$};
\node[right]  at (3,2.7) {\footnotesize $\top$};
\draw (6,0) -- (6,2.7);
\draw[fill] (6,0) circle [radius=0.05];
\draw[fill] (6,0.65) circle [radius=0.05];
\draw[fill] (6,1.3) circle [radius=0.05];
\draw[fill] (6,2) circle [radius=0.05];
\draw[fill] (6,2.7) circle [radius=0.05];
\node[right]  at (6,0) {\footnotesize $\bot$};
\node[right]  at (6,0.65) {\footnotesize $1$};
\node[right]  at (6,1.3) {\footnotesize $a$};
\node[right]  at (6,2) {\footnotesize $0=a^2$};
\node[right]  at (6,2.7) {\footnotesize $\top=0^2=0\top$};
\draw (9,0) -- (9,2.7);
\draw[fill] (9,0) circle [radius=0.05];
\draw[fill] (9,0.65) circle [radius=0.05];
\draw[fill] (9,1.3) circle [radius=0.05];
\draw[fill] (9,2) circle [radius=0.05];
\draw[fill] (9,2.7) circle [radius=0.05];
\node[right]  at (9,0) {\footnotesize $\bot$};
\node[right]  at (9,0.65) {\footnotesize $1$};
\node[right]  at (9,1.3) {\footnotesize $a=a^2$};
\node[right]  at (9,2) {\footnotesize $0$};
\node[right]  at (9,2.7) {\footnotesize $\top=0^2=0\top$};
\node at (0,-.5){\footnotesize $\alg A_2$};
\node at (3,-.5){\footnotesize $\alg C_5$};
\node at (6,-.5){\footnotesize $\alg A_{1,1}$};
\node at (9,-.5){\footnotesize $\alg A_{1,2}$};
\end{tikzpicture}
\end{center}
\caption{5-element Girard chains}\label{figure:3}
\end{figure}

\begin{figure}[h]
\begin{center}
\begin{tikzpicture}
\draw (0,0) -- (-.7,1) -- (0,2) -- (.7,1.3) -- (.7,.7) -- (0,0);
\draw[fill] (0,0) circle [radius=0.05];
\draw[fill] (-.7,1) circle [radius=0.05];
\draw[fill] (0,2) circle [radius=0.05];
\draw[fill] (.7,1.3) circle [radius=0.05];
\draw[fill] (.7,.7) circle [radius=0.05];
\node[right]  at (0,0) {\tiny $\bot$};
\node[left] at (-.7,1){\tiny $a$};
\node[right]  at (0,2) {\tiny \text{$\top$=$0\top$=$a\top$}};
\node[right]  at (.7,1.3) {\tiny $1$};
\node[right]  at (.7,.7) {\tiny \text{$0$=$0^2$=$a^2$}};
\draw (3,0) -- (2.3,1) -- (3,2) -- (3.7,1.3) -- (3.7,.7) -- (3,0);
\draw[fill] (3,0) circle [radius=0.05];
\draw[fill] (2.3,1) circle [radius=0.05];
\draw[fill] (3,2) circle [radius=0.05];
\draw[fill] (3.7,1.3) circle [radius=0.05];
\draw[fill] (3.7,.7) circle [radius=0.05];
\node[right]  at (3,0) {\tiny \text{$\bot$=$a^2$}};
\node[left] at (2.3,1){\tiny \text{$a$=$a\top$}};
\node[right]  at (3,2) {\tiny \text{$\top$=$0\top$}};
\node[right]  at (3.7,1.3) {\tiny $1$};
\node[right]  at (3.7,.7) {\tiny \text{$0$=$0^2$}};
\draw (6,0) -- (5.3,1) -- (6,2) -- (6.7,1.3) -- (6.7,.7) -- (6,0);
\draw[fill] (6,0) circle [radius=0.05];
\draw[fill] (5.3,1) circle [radius=0.05];
\draw[fill] (6,2) circle [radius=0.05];
\draw[fill] (6.7,1.3) circle [radius=0.05];
\draw[fill] (6.7,.7) circle [radius=0.05];
\node[right]  at (6,0) {\tiny $\bot$};
\node[left] at (5.3,1){\tiny  $a$};
\node[right]  at (6,2) {\tiny \text{$\top$=$0^2$=$0\top$=$a\top$}};
\node[right]  at (6.7,1.3) {\tiny \text{$0$=$a^2$}};
\node[right]  at (6.7,.7) {\tiny $1$};
\draw (9,0) -- (8.3,1) -- (9,2) -- (9.7,1.3) -- (9.7,.7) -- (9,0);
\draw[fill] (9,0) circle [radius=0.05];
\draw[fill] (8.3,1) circle [radius=0.05];
\draw[fill] (9,2) circle [radius=0.05];
\draw[fill] (9.7,1.3) circle [radius=0.05];
\draw[fill] (9.7,.7) circle [radius=0.05];
\node[right]  at (9,0) {\tiny \text{ $\bot$=$a^2$}};
\node[left] at (8.3,1){\tiny \text{$a$=$0a$=$a\top$}};
\node[right]  at (9,2) {\tiny \text{ $\top$=$0^2$=$0\top$}};
\node[right]  at (9.7,1.3) {\tiny  $0$};
\node[right]  at (9.7,.7) {\tiny $1$};
\node at (0,-.5){\footnotesize $\alg B_1$};
\node at (3,-.5){\footnotesize $\alg B_2$};
\node at (6,-.5){\footnotesize $\alg B_3$};
\node at (9,-.5){\footnotesize $\alg B_4$};
\end{tikzpicture}
\end{center}
\caption{5-element nonmodular Girard algebras}\label{figure:4}
\end{figure}

There are four 5-element nondegenerate Girard algebras based on $\alg N_5$, displayed in Figure \ref{figure:4}.  It is easy to check that each of them has proper subalgebras: more precisely the $0$-generated subalgebras
of $\alg B_1$ and $\alg B_2$ are isomorphic with $\alg C'_4$ and the $0$-generated subalgebras of $\alg B_3$ and $\alg B_4$ are isomorphic with $\alg C_4$. Both $\alg B_3$ and $\alg B_4$ are simple, with $\alg C_4$ as their only proper subalgebra, so they generate covers of $\VV(\alg C_4)$ which is the only atom below them. On the other hand $\alg B_1$ and $\alg B_2$ are not simple: the congruence generated by $(0,1)$ is nontrivial. In case of $\alg B_1$ it yields an image isomorphic with
$\alg L_4$ and in case of $\alg B_2$ it yields an image isomorphic with  $\alg K_4$.
 However, these are not covers of the individual four-element varieties. In each pair the varieties are incomparable: their generators are nonisomorphic four-element subdirectly irreducible algebras, so J\'onsson's Lemma excludes either containment. Moreover $\alg B_1$ and $\alg B_2$ are subdirectly irreducible, since their proper filters are exactly $\{1,\top\}$ and $\{0,1,\top\}$. Any ultraproduct of factors chosen from either of the two relevant four-element types is isomorphic to one of those types, since the ultrafilter selects one cell of the finite partition by factor type. Hence J\'onsson's Lemma and cardinality give
\[
\VV(\alg C'_4),\ \VV(\alg L_4)
\;<\;\VV(\alg C'_4)\join\VV(\alg L_4)
\;<\;\VV(\alg B_1),
\]
\[
\VV(\alg C'_4),\ \VV(\alg K_4)
\;<\;\VV(\alg C'_4)\join\VV(\alg K_4)
\;<\;\VV(\alg B_2).
\]
By Lemma \ref{lem:relativespectra}, $\VV(\alg K_3)$ is the only atom below either variety.

There are four nondegenerate Girard algebras left; they are all $\alg M_3$ based and displayed in Figure \ref{figure:5}. They are also all simple, since $\alg M_3$ is a simple lattice.
Now it is easily seen that the $0$-generated subalgebra of $\alg D_{1,1}$ and $\alg E_1$ is $\alg D_1$ and the $0$-generated subalgebra of $\alg E_2$ is $\alg D_2$. Since both $\alg D_1$ and $\alg D_2$ are simple they generate the only atoms below the corresponding varieties.  On the other hand $\alg D_5$ is $0$-generated, so it generates an atom
in $\Lambda(\mathsf{GA})$.

\begin{figure}[h]
\begin{center}
\begin{tikzpicture}[scale=0.8]
\draw (0,0) -- (-1,1) -- (0,2) -- (1,1) -- (0,0);
\draw (0,0) -- (0,2);
\draw[fill] (0,0) circle [radius=0.05];
\draw[fill] (-1,1) circle [radius=0.05];
\draw[fill] (0,1) circle [radius=0.05];
\draw[fill] (0,2) circle [radius=0.05];
\draw[fill] (1,1) circle [radius=0.05];
\node[right]  at (0,0) {\tiny  $\bot$};
\node[right]  at (1,1) {\tiny \text{$0$=$a^2$}};
\node[right]  at (0,1) {\tiny $a$};
\node[left]  at (-1,1) {\tiny $1$};
\node[right]  at (0,2) {\tiny \text{$\top$=$0^2$=$0\top$}};
\draw (3.5,0) -- (2.5,1) -- (3.5,2) -- (4.5,1) -- (3.5,0);
\draw (3.5,0) -- (3.5,2);
\draw[fill] (3.5,0) circle [radius=0.05];
\draw[fill] (2.5,1) circle [radius=0.05];
\draw[fill] (3.5,1) circle [radius=0.05];
\draw[fill] (3.5,2) circle [radius=0.05];
\draw[fill] (4.5,1) circle [radius=0.05];
\node[right]  at (3.5,0) {\tiny \text{$\bot$ =$a^2$}};
\node[right]  at (4.5,1) {\tiny $0$};
\node[right]  at (3.5,1) {\tiny $a$};
\node[left]  at (2.5,1) {\tiny $1$};
\node[right]  at (3.5,2) {\tiny \text{$\top$=$0^2$=$0\top$}};
\draw (7,0) -- (6,1) -- (7,2) -- (8,1) -- (7,0);
\draw (7,0) -- (7,2);
\draw[fill] (7,0) circle [radius=0.05];
\draw[fill] (6,1) circle [radius=0.05];
\draw[fill] (7,1) circle [radius=0.05];
\draw[fill] (7,2) circle [radius=0.05];
\draw[fill] (8,1) circle [radius=0.05];
\node[right]  at (7,0) {\tiny \text{$\bot$ =$a^2 $}};
\node[right]  at (8,1) {\tiny $0$};
\node[right]  at (7,1) {\tiny $a$};
\node[left]  at (6,1) {\tiny \text{$1$=$0^2$}};
\node[right]  at (7,2) {\tiny \text{$\top$=$0\top$}};
\draw (10.5,0) -- (9.5,1) -- (10.5,2) -- (11.5,1) -- (10.5,0);
\draw (10.5,0) -- (10.5,2);
\draw[fill] (10.5,0) circle [radius=0.05];
\draw[fill] (9.5,1) circle [radius=0.05];
\draw[fill] (10.5,1) circle [radius=0.05];
\draw[fill] (10.5,2) circle [radius=0.05];
\draw[fill] (11.5,1) circle [radius=0.05];
\node[right]  at (10.5,0) {\tiny $\bot $};
\node[right]  at (11.5,1) {\tiny \text{$0$=$a^2$}};
\node[right]  at (10.45,1) {\tiny \text{$a$=$0^2$}};
\node[left]  at (9.5,1) {\tiny $1$};
\node[right]  at (10.5,2) {\tiny \text{$\top$=$0\top$}};
\node at (0,-.5){\footnotesize $\alg D_{1,1}$};
\node at (3.5,-.5){\footnotesize $\alg E_1$};
\node at (7,-.5){\footnotesize $\alg E_2$};
\node at (10.5,-.5){\footnotesize $\alg D_5$};
\end{tikzpicture}
\end{center}
\caption{5-element modular Girard algebras}\label{figure:5}
\end{figure}

To complete the classification we mention that one can show that, of the 6 degenerate 5-element Girard algebras, one is a chain, two are $\alg N_5$ based and three are $\alg M_3$ based. The reader can have fun in discovering them.

There are many nonintegral Girard algebras with six elements and they can in principle be classified using the same techniques as above. However, the full classification quickly becomes lengthy and is not needed here. Nevertheless we want to present at least a $6$-element Girard algebra which generates an atom. Its underlying lattice is the product of the $2$-element and $3$-element chains:
\begin{figure}[h]
\begin{center}
\begin{tikzpicture}
\draw (0,0) -- (-1,1) -- (0,2)-- (1,3) -- (2,2) -- (1,1) -- (0,0);
\draw (1,1) -- (0,2);
\draw[fill] (0,0) circle [radius=0.05];
\draw[fill] (-1,1) circle [radius=0.05];
\draw[fill] (0,2) circle [radius=0.05];
\draw[fill] (1,1) circle [radius=0.05];
\draw[fill] (1,3) circle [radius=0.05];
\draw[fill] (2,2) circle [radius=0.05];
\node[right]  at (0,0) {\footnotesize $\bot$};
\node[right]  at (1,1) {\footnotesize $a$};
\node[left]  at (-1,1) {\footnotesize $0$};
\node[left]  at (0,2) {\footnotesize $b$};
\node[right]  at (2,2) {\footnotesize $1$};
\node[right]  at (1,3) {\footnotesize $\top$};
\end{tikzpicture}
\end{center}
\caption{}\label{figure:2x3}
\end{figure}

 Its multiplication table is:

\begin{center}
\begin{tabular}{c|cccccc}
  $\cdot$ & $\bot$ & $0$ & $a$ & $b$ & $1$ & $\top$ \\
  \hline
  $\bot$ & $\bot$ & $\bot$ & $\bot$ & $\bot$ & $\bot$ & $\bot$ \\
  $0$ & $\bot$ & $a$ & $\bot$ & $a$ & 0 & $b$ \\
  $a$ & $\bot$ & $\bot$ & $\bot$ & $\bot$ & $a$ & $a$ \\
  $b$ & $\bot$ & $a$ & $\bot$ & $a$ & $b$ & $b$ \\
  $1$ & $\bot$& $0$ & $a$ & $b$ & $1$ & $\top$ \\
  $\top$ & $\bot$ & $b$ & $a$ & $b$ & $\top$ & $\top$ \\
\end{tabular}.
\end{center}

A routine verification shows that this is a Girard algebra and that it is simple and $0$-generated. Hence it generates an atom in $\Lambda(\mathsf{GA})$.

So far all the atoms we have presented are based on modular lattices. Now we will present an example based on a nonmodular lattice with 7 elements, which has been called $\alg G$ in \cite{JonssonRival1979}.
The lattice structure is

\begin{figure}[h]
\begin{center}
\begin{tikzpicture}
\draw (0,0) -- (-1,1) -- (0,2)-- (1,3) -- (2,2) -- (1,1) -- (0,0);
 \draw (1,1) -- (.5,1.5) -- (0,2);
\draw[fill] (0,0) circle [radius=0.05];
\draw[fill] (-1,1) circle [radius=0.05];
\draw[fill] (.5,1.5) circle [radius=0.05];
\draw[fill] (0,2) circle [radius=0.05];
\draw[fill] (1,1) circle [radius=0.05];
\draw[fill] (1,3) circle [radius=0.05];
\draw[fill] (2,2) circle [radius=0.05];
\node[right]  at (0,0) {\footnotesize $\bot$};
\node[right]  at (1,1) {\footnotesize $a$};
\node[left]  at (.5,1.5) {\footnotesize $c$};
\node[left]  at (-1,1) {\footnotesize $0$};
\node[left]  at (0,2) {\footnotesize $b$};
\node[right]  at (2,2) {\footnotesize $1$};
\node[right]  at (1,3) {\footnotesize $\top$};
\end{tikzpicture}
\end{center}
\caption{The lattice $\alg G$}\label{figure:G}
\end{figure}
and it is clearly nonmodular (it contains an $\alg N_5$).
The multiplication is
\[
\begin{array}{c|ccccccc}
	\cdot&\bot&a&c&0&b&1&\top\\ \hline
	\bot&\bot&\bot&\bot&\bot&\bot&\bot&\bot\\
	a&\bot&\bot&\bot&\bot&\bot&a&a\\
	c&\bot&\bot&\bot&a&a&c&c\\
	0&\bot&\bot&a&c&c&0&b\\
	b&\bot&\bot&a&c&c&b&b\\
	1&\bot&a&c&0&b&1&\top\\
	\top&\bot&a&c&b&b&\top&\top
\end{array}.
\]
 A direct check shows that multiplication is associative, that $\bot$ is absorbing, and that multiplication preserves joins in each coordinate; since the lattice is finite, this is enough to guarantee residuation. The induced negation $\simA x=x\imp0$ is
\begin{center}
\begin{tabular}{c|ccccccc}
   $\simA$ & $\bot$ & $0$ & $1$ & $a$  & $b$ & $c$ & $\top$ \\
  \hline
    & $\top$ & $1$ & $0$ & $b$ & $a$ & $c$ & $\bot$ \\
\end{tabular}
\end{center}
so it is involutive and $\alg G$ is a Girard algebra. Moreover
\[
c=0^2,\qquad a=0^3,\qquad b=0\top,
\]
hence it is $0$-generated. It is also simple: every filter properly containing
$\{1,\top\}$ contains one of $a,c,0,b$, and
\[
a^2=c^2=\bot,\qquad 0^2=b^2=c,
\]
so every such filter contains $\bot$.

Thus $\alg G$ is simple and $0$-generated with nonmodular lattice reduct,
and therefore $\VV(\alg G)$ is a nonmodular atom of
$\Lambda(\mathsf{GA})$.

\subsubsection{A countable chain above $\alg D_1$}  Let  $\alg D_{1,n}$, $n \ge 0$ be the modular lattice of height 3 with $n+2$ coatoms
(also denoted $\alg M_{n+2}$), where the coatoms are $0,1,\vuc an$ and the largest and smallest element are $\top$ and $\bot$. We define a binary commutative operation in the following way:
\begin{align*}
&x\cdot \bot =\bot\quad\text{for all $x$}\\
&x\cdot 1= x \quad\text{for all $x$}\\
&x\cdot y =\top\quad\text{if $x \ne y$ and $x,y \notin \{\bot,1\}$}\\
&x\cdot x =0 \quad \text{if $x=a_i$, $i \le n$}\\
&0\cdot 0 = 0 \cdot \top = \top \cdot \top = \top.
\end{align*}

\begin{lemma} For each $n\ge 0$ the operation $\cdot$ is residuated and induces a structure of a Girard algebra on $\alg D_{1,n}$; moreover
\begin{enumerate}
\item $\alg D_{1,0} \cong \alg D_1$;
\item each $\alg D_{1,n}$ is simple and, up to isomorphism, its only subalgebras are the $\alg D_{1,k}$ with $k \le n$.
\end{enumerate}
\end{lemma}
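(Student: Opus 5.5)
The plan is to verify the Girard-algebra axioms directly on the finite lattice $\alg D_{1,n}=\alg M_{n+2}$, using the same criterion the paper uses for $\alg G$. On a finite lattice, a commutative associative operation with unit $1$, absorbing $\bot$, and preserving binary joins in each coordinate is residuated. After that, one computes $\simA$ and checks involutivity; the bounds $\bot,\top$ are automatic.

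Write $N=\{0,\vuc an,\top\}$ for the elements other than $\bot,1$. The definition says that for $x,y\in N$ we have $xy=\top$, except that $a_i\cdot a_i=0$. In particular $N\cdot N\sse\{0,\top\}$, and $0\cdot z=\top\cdot z=\top$ for every $z\in N$.

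\emph{Associativity.} If one of $x,y,z$ is $\bot$ or $1$, associativity is trivial. Otherwise $xy\in\{0,\top\}$, so $(xy)z=\top$, and symmetrically $x(yz)=\top$.

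\emph{Join preservation.} Fix $x\in N$ (the cases $x=\bot,1$ are trivial). It suffices to check $x(y\join z)=xy\join xz$ for incomparable $y,z$. Incomparable elements are two distinct coatoms, so $y\join z=\top$ and $x\top=\top$. If $y,z\in N$, then $xy\join xz\ge 0\join 0$ in the worst case, and $y\ne z$ forces at least one product to be $\top$. If, say, $y=1$, then $xy\join xz=x\join xz$. This equals $\top$ unless $xz\ne\top$, which forces $x=z=a_i$; but then $xz=0$ and $a_i\join 0=\top$.

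\emph{Negation.} Next compute $\simA x=\max\{y: xy\le 0\}$. This gives $\simA 1=0$, $\simA 0=1$ (since $0y\le 0$ only for $y\in\{\bot,1\}$), $\simA a_i=a_i$ (as $a_i^2=0$ but $a_i\cdot 1=a_i\not\le 0$), and $\simA\top=\bot$, $\simA\bot=\top$. Hence $\simA$ is an involution and $\alg D_{1,n}$ is a Girard algebra.

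For (1), when $n=0$ the only coatoms are $0,1$, and the table gives $0^2=0\top=\top$, which is exactly $\alg D_1$ in Figure~\ref{figure:2}.

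For (2), simplicity goes as follows. Every filter contains $1$ and hence $\top$. A filter properly containing $\{1,\top\}$ contains some element of $\{\bot,0,\vuc an\}$. Since $0\meet 1=a_i\meet 1=\bot$, such a filter contains $\bot$ and is therefore all of $A$.

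For the subalgebras, every subalgebra contains the constants $\bot,0,1,\top$. Conversely, for any $I\sse\{1,\dots,n\}$, the set $\{\bot,0,1,\top\}\cup\{a_i:i\in I\}$ is closed under all operations. Meets and joins of distinct coatoms are $\bot,\top$, products land in $\{\bot,0,\top\}$ or are one of the factors, $\simA$ fixes each $a_i$, and $x\imp y=\simA(x\cdot\simA y)$. Since the $a_i$ play symmetric roles, this subalgebra is isomorphic to $\alg D_{1,|I|}$, which gives exactly the $\alg D_{1,k}$ with $k\le n$.

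The only step needing care is the join-preservation case analysis, in particular the case $y=1$, $z=x=a_i$. Everything else is immediate from $N\cdot N\sse\{0,\top\}$.
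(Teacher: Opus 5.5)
Your proof is correct and follows the paper's route: you check associativity and join preservation on the finite lattice and then compute $\simA$, supplying the case checks the paper calls straightforward. The one omission is that reducing join preservation to incomparable $y,z$ tacitly uses monotonicity; this is immediate here, because any $y<z$ has $y=\bot$ or $z=\top$, and $x\bot=\bot$, $x\top=\top$ for $x\in N$.

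The only genuine difference is the simplicity step. The paper uses that $\alg M_{n+2}$ is a simple lattice when $n>0$, and the earlier check of $\alg D_1$ when $n=0$. Your filter argument, using $x\meet 1=\bot$ for every coatom $x\ne1$, covers all $n$ at once.
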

\begin{proof}

It is straightforward to check that $\cdot$ is associative and  preserves binary joins and $\bot$ so each $\alg D_{1,n}$ is a bounded $\mathsf{FL}_{e}$-algebra. It is also easy to see that $\simA 1= 0$, $\simA 0 =1$, $\simA \top = \bot$ and $\simA \bot =\top$.

Let now $c \in \{\vuc an\}$; then
$$
\simA c = c \imp 0 = \bigvee\{z: z\cdot c \in \{0,\bot\}\} = c.
$$
This implies that $0$ is involutive and so $\alg D_{1,n}$ is a Girard algebra.

That $\alg D_{1,0} \cong \alg D_1$ is obvious so it is simple.
For $n>0$ $\alg D_{1,n}$  is simple, since it is already simple as a lattice.  That, up to isomorphism, its only subalgebras are the $\alg D_{1,k}$ with $k \le n$ follows from a straightforward calculation.
\end{proof}

So by J\'onsson's Lemma  $\VV(\alg D_{1,n})$ is covered by $\VV(\alg D_{1,n+1})$ for $n \ge 0$. This means that there is a countable chain
$$
\VV(\alg D_1) \prec \VV(\alg D_{1,1}) \prec \dots \prec \VV(\alg D_{1,n}) \prec \dots
$$
above $\alg D_1$. Here is a picture of $\alg D_{1,2}$:

\begin{figure}[h]
\begin{center}
\begin{tikzpicture}
\draw (0,0) -- (-1.5,1) -- (0,2) -- (1.5,1) -- (0,0);
\draw (0,0) -- (-.5,1) -- (0,2);
\draw (0,0) -- (.5,1) -- (0,2);
\draw[fill] (0,0) circle [radius=0.05];
\draw[fill] (-1.5,1) circle [radius=0.05];
\draw[fill] (-.5,1) circle [radius=0.05];
\draw[fill] (.5,1) circle [radius=0.05];
\draw[fill] (1.5,1) circle [radius=0.05];
\draw[fill] (0,2) circle [radius=0.05];
\node[right]  at (0,0) {\footnotesize $\bot$};
\node[right]  at (1.5,1) {\footnotesize $0=a^2=b^2$};
\node[right]  at (-.5,1) {\footnotesize $a$};
\node[right]  at (.5,1) {\footnotesize $b$};
\node[left]  at (-1.5,1) {\footnotesize $1$};
\node[right]  at (0,2) {\footnotesize $\top=0^2=0\top=ab$};
\node at (0,-.5){\footnotesize $\alg D_{1,2}$};
\end{tikzpicture}
\end{center}
\caption{The algebra $\alg D_{1,2}$.}\label{figure:6}
\end{figure}

\subsubsection{A countable family of totally ordered atoms}

\begin{figure}[htbp]
\begin{center}
\begin{tikzpicture}
\draw (0,0) -- (0,3.35);
\draw[fill] (0,0) circle [radius=0.05];
\draw[fill] (0,0.65) circle [radius=0.05];
\draw[fill] (0,1.3) circle [radius=0.05];
\draw[fill] (0,2) circle [radius=0.05];
\draw[fill] (0,2.7) circle [radius=0.05];
\draw[fill] (0,3.35) circle [radius=0.05];
\node[right]  at (0,0) {\footnotesize $\bot=ab$};
\node[right]  at (0,0.65) {\footnotesize $0$};
\node[right]  at (0,1.3) {\footnotesize $a=0\top=b^2$};
\node[right]  at (0,2) {\footnotesize $b$};
\node[right]  at (0,2.7) {\footnotesize $1$};
\node[right]  at (0,3.35) {\footnotesize $\top$};
\node at (0,-.5){\footnotesize $\alg C_6$};
\end{tikzpicture}
\end{center}
\caption{The Girard chain $\alg C_6$.}\label{figure:7}
\end{figure}

Consider the 6-element Girard chain in Figure \ref{figure:7};
its complete multiplication table is:
\begin{center}
\begin{tabular}{c|cccccc}
  $\cdot$ & $\bot$ & $0$ & $a$ & $b$ & $1$ & $\top$ \\
  \hline
  $\bot$ & $\bot$ & $\bot$ & $\bot$ & $\bot$ & $\bot$ & $\bot$ \\
  $0$ & $\bot$ & $\bot$ & $\bot$ & $\bot$ & 0 & $a$ \\
  $a$ & $\bot$ & $\bot$ & $\bot$ & $\bot$ & $a$ & $a$ \\
  $b$ & $\bot$ & $\bot$ & $\bot$ & $a$ & $b$ & $b$ \\
  $1$ & $\bot$& $0$ & $a$ & $b$ & $1$ & $\top$ \\
  $\top$ & $\bot$ & $a$ & $a$ & $b$ & $\top$ & $\top$ \\
\end{tabular}
\end{center}
Again a straightforward computation shows that $\alg C_6$ is simple and $0$-generated, so it generates an atom.
Now the comparison of $\alg C_5$ and $\alg C_6$ suggests a pattern.  A careful analysis gives a recipe for constructing a chain of any length with the same behavior. The chain  $\alg C_{n+4}$, for $n\ge1$, has length $n+4$ and the ordering is
$$
\bot < 0 <a_1 < a_2 < \dots <a_n <1 < \top.
$$

 Besides $\bot x=\bot$, $1x=x$, and $\top^2=\top$, the nonobvious products are:
\begin{align*}
&0^2=\bot\\
&0a_i = \bot\\
&0\top= a_1\\
&a_i\top=a_i\\
&a_ia_j =\left\{
         \begin{array}{ll}
           \bot, & \hbox{if $i+j \le n+1$;} \\
            a_{i+j-n-1}, & \hbox{otherwise.}
         \end{array}
       \right.
\end{align*}
It is easy to check that  for $n=1,2$ these conditions give $\alg C_5$ and $\alg C_6$; $\alg C_4$ is the separate example above.
Since the multiplication clearly respects the ordering, to show that $\alg C_{n+4}$ is a bounded $\mathsf{FL}_{e}$-algebra it is enough to check that the multiplication is associative.

The cases involving $\bot,0,1,\top$ follow directly from the displayed products. For the remaining cases put $L=n+1$ and encode $\bot$ by the index $0$. Multiplication of indices is $i*j=\max(0,i+j-L)$, and for $1\le i,j,k\le n$,
\[
(i*j)*k=\max(0,i+j+k-2L)=i*(j*k).
\]
Thus multiplication is associative.

Next we show that $0$ is involutive. To do so it is enough to observe that $\simA \top=\bot$, $\simA \bot =\top$, $\simA 0= 1$, $\simA 1=0$ and for all
 $k\le n$, $\simA a_k = a_{(n-k)+1}$.

To prove that $\alg C_{n+4}$ is simple we show that every nonminimal filter of $ \alg C_{n+4}$ is the entire algebra. First it is easy to see, since it is true in any
residuated lattice, that the minimal filter is $\{1,\top\}$. If  $F$  is nonminimal, then $a_i \in F$ for some $i$ and hence $a_ia_i \in F$.
Now if $2i \le n+1$ (i.e. $i \le (n+1)/2$) $a_ia_i =\bot$; hence $\bot \in F$ and $F$ is the entire algebra.  Otherwise
$a_ia_i = a_{2i-n-1}$ and $2i-n-1 < i$ which implies $a_{2i-n-1} <a_i$. Let $u=2i-n-1$; if $a_ua_u \ne \bot$,  repeat the argument and obtain a $v$ such that
$a_v < a_u$ and $a_v \in F$. After a finite number of steps we must find a $v'$ such that $a_{v'} \in F$ and $ 2v' \le n+1$. This as before will force $\bot \in F$ and hence again $F$ is the entire algebra.

Finally $\alg C_{n+4}$ has no proper subalgebras. The constants give $a_1=0\top$, and involution gives $a_n=\simA a_1$. For $2\le j\le n$, $a_na_j=a_{j-1}$, so all the remaining elements are generated as well; the case $n=1$ is already settled.

So for any chain $\alg C_{n+4}$, $\VV( \alg C_{n+4})$ is an atom in $\Lambda(\mathsf{GA})$.

\subsubsection{An infinite family of modular atoms}\label{subsec:modular atoms}

Now we present a construction that produces an infinite family of modular atoms (and one that is not finitely generated).
Let $\alg G$ be an abelian group, written multiplicatively with identity $e$,
and let $c\in G$. Let $\mathbf M(\alg G,c)$ be the algebra with universe
$\{\bot\}\cup G\cup\{\top\}$, where $\bot$ and $\top$ are new elements.
We introduce a partial ordering on $\alg M(\alg G,c)$ in the following way:
\begin{enumerate}
\itemb the elements of $G$ form an antichain;
\itemb  $\bot<g<\top$ for every $g\in G$.
\end{enumerate}
Moreover we define:
\[
\bot\cdot x=x\cdot\bot=\bot,\qquad
\top\cdot x=x\cdot\top=\top\quad(x\ne\bot),\qquad
g\cdot h=gh\quad(g,h\in G),
\]
\[
\begin{aligned}
\bot\to y&=\top, & x\to\top&=\top,\\
\top\to y&=\bot\quad(y\ne\top), &
g\to\bot&=\bot,\\
g\to h&=g^{-1}h\quad(g,h\in G).
\end{aligned}
\]
and we add the constants $1:=e$, $0:=c$.

\begin{proposition}\label{prop:M}
Let $\alg G$ be an abelian group and $c\in G$. Then:
\begin{enumerate}
\item[(a)] $\mathbf M(\alg G,c)$ is a nontrivial bounded Girard algebra, with
$\simA g=g^{-1}c$, $\simA\bot=\top$, $\simA\top=\bot$ and least element
$\bot=\simA\top$;
\item[(b)] the subalgebras of $\mathbf M(\alg G,c)$ are exactly the algebras
$\mathbf M(\alg H,c)$ where  $H\le G$ and $c\in H$;
\item[(c)] $\mathbf M(\alg G,c)$ is $0$-generated if and only if $\alg G$ is generated by $c$ as a group.
\end{enumerate}
\end{proposition}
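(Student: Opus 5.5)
For (a) I will check the axioms directly on the three kinds of elements $\bot$, group elements, and $\top$. The order is the lattice $\alg M_{|G|}$, with $\bot$ and $\top$ as bounds. For $g\ne h$ in $G$ we have $g\join h=\top$ and $g\meet h=\bot$, so it is a bounded lattice.

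Next comes the monoid. Multiplication is commutative because $\alg G$ is abelian, and $e$ is a unit. Associativity follows from a short case split. If any factor is $\bot$, the product is $\bot$. Otherwise, if any factor is $\top$, the product is $\top$, since products of non-$\bot$ elements are never $\bot$. Otherwise we are computing inside $\alg G$.

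For residuation, I will verify $x\cdot y\le z$ iff $x\le y\imp z$ case by case in $y$ and $z$. The only nontrivial case is $y=g$, $z=h$ in $G$. Here $x\cdot g\le h$ forces $x=\bot$ or $xg=h$, that is, $x=g^{-1}h$. So the largest such $x$ is $g^{-1}h$, and the set of solutions is exactly ${\downarrow}(g^{-1}h)=\{\bot,g^{-1}h\}$.

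The remaining cases are quick. For $y=g$, $z=\bot$, only $x=\bot$ works. For $y=\top$ and $z\ne\top$, only $x=\bot$ works, because $x\top=\top$ whenever $x\neq\bot$. When $z=\top$ every $x$ works.

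From these formulas, $\simA g=g\imp c=g^{-1}c$, and then $\simA\simA g=(g^{-1}c)^{-1}c=g$. Also $\simA\bot=\top$ and $\simA\top=\top\imp c=\bot$. So $0=c$ is involutive and $\mathbf M(\alg G,c)$ is a Girard algebra; it is nontrivial since $\bot\neq\top$.

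For (b) I will show two inclusions. First, if $H\le G$ and $c\in H$, then $\{\bot,\top\}\cup H$ is closed under all operations. Joins and meets of distinct elements of $H$ are $\top$ and $\bot$. Products stay in $H\cup\{\bot,\top\}$, and so do the values $g^{-1}h$ of $\imp$. The constants $e$, $c$, $\bot$, $\top$ all belong to it.

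Conversely, let $S$ be a subalgebra and put $H=S\cap G$. Then $e,c\in H$ because they are constants, and $\bot,\top\in S$. $H$ is closed under products. It is also closed under inverses, since $g\imp e=g^{-1}$. Hence $H\le G$ and $S=\mathbf M(\alg H,c)$.

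Part (c) follows from (b). The $0$-generated subalgebra is the least subalgebra, which is $\mathbf M(\langle c\rangle,c)$. This equals $\mathbf M(\alg G,c)$ exactly when $\langle c\rangle=G$.

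The only step needing care is the residuation check in (a). In particular one has to see that $x\top=\top$ for all $x\ne\bot$ is compatible with $\top\imp y=\bot$ for $y\neq\top$. I would handle it by the explicit solution-set computation above.
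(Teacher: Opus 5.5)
Your proposal is correct and takes essentially the same route as the paper: a direct case check of the lattice, monoid, residuation and involution axioms for (a); for (b), the observation that a subalgebra meets $G$ in a subgroup containing $c$ (you use closure under products and $g\imp e=g^{-1}$, the paper uses $g\imp h=g^{-1}h$); and (c) read off from (b) via $\langle c\rangle$. The only omission is the case $y=\bot$ in your residuation list, which is immediate since $x\cdot\bot=\bot\le z$ for every $x$, matching $\bot\imp z=\top$.
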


\begin{proof}
(a) The order is a lattice order of height three: for distinct
$g,h\in G$ one has $g\wedge h=\bot$ and $g\vee h=\top$. Multiplication is
commutative because $\alg G$ is abelian and the clauses for $\bot,\top$ are
symmetric; $e$ is a unit since $e\cdot g=g$, $e\cdot\bot=\bot$ and
$e\cdot\top=\top$. It is associative: any product one of whose factors is
$\bot$ equals $\bot$ under every bracketing; any product with no factor
$\bot$ and at least one factor $\top$ equals $\top$ under every
bracketing. Finally, on $G$ this is the group operation.

Residuation, $x\cdot z\le y\iff z\le x\to y$, is checked by cases on
$x$ and $y$. If $x=\bot$, both sides hold for every $z$, matching
$\bot\to y=\top$. If $y=\top$, both sides hold for every $z$, matching
$x\to\top=\top$. If $x=\top$ and $y\ne\top$, then $\top\cdot z\le y$ holds
exactly for $z=\bot$, matching $\top\to y=\bot$. If $x=g\in G$ and
$y=\bot$, then $g\cdot z\in G\cup\{\bot,\top\}$ is $\bot$ exactly for
$z=\bot$, matching $g\to\bot=\bot$. Finally let $x=g$ and $y=h$ with
$g,h\in G$: for $z=\bot$ both sides hold; for $z=\top$ we have
$g\cdot\top=\top\not\le h$ and $\top\not\le g^{-1}h$, so both fail; and
for $z=k\in G$, $gk\le h$ means $gk=h$, i.e.\ $k=g^{-1}h$, which is
exactly $k\le g^{-1}h$ in the antichain. So
$\mathbf M(G,c)$ is a commutative residuated lattice.

For involutivity, $\simA g=g\to c=g^{-1}c$, $\simA\bot=\bot\to c=\top$ and
$\simA\top=\top\to c=\bot$ since $c\ne\top$; hence
$\simA\simA g=(g^{-1}c)^{-1}c=c^{-1}gc=g$ using commutativity, and
$\simA\simA$ fixes $\bot$ and $\top$. Finally $x\to\top=\top\ge e=1$, so
$\top$ satisfies the axiom for the bounded constant, and
$\simA\top=\bot$ is the least element. The algebra is nontrivial since
$\bot\ne\top$.

(b) A subuniverse contains the constants $e,c,\top$ and
$\bot=\simA\top$, and its intersection $H$ with $G$ is closed under
$g\cdot h=gh$ and $g\to h=g^{-1}h$, hence is a subgroup containing $c$.
Conversely for any such $H$ the set $\{\bot\}\cup H\cup\{\top\}$ is closed
under all operations, and the induced algebra is $\mathbf M(\alg H,c)$.

(c) By (b) the $0$-generated subalgebra of $\alg M(\alg G,c)$  is
$\mathbf M(\langle c\rangle,c)$, since $\langle c\rangle$ is the smallest
subgroup containing $c$. This is all of $\mathbf M(\alg G,c)$ exactly when
$G=\langle c\rangle$.
\end{proof}

First we observe that $\alg M(\alg G,c)$ is simple for every $\alg G,c$. Its least filter is $\{e,\top\}$, and any larger filter contains either $\bot$ or a group element $g\ne e$, in which case it also contains $g\meet e=\bot$.

Consequently Proposition \ref{prop:M}(c) and Lemma \ref{lem:relativespectra} give, for every abelian group $\alg G$ and $c\in G$,
\[
\op{At}(\VV(\mathbf M(\alg G,c)))
=\{\VV(\mathbf M(\langle c\rangle,c))\}.
\]
Let $\alg G_n =\langle c_n\rangle$ be the cyclic group of order $n\ge1$, and let $\alg G_\infty=\langle c\rangle\cong\mathbb Z$. The algebras $\mathbf M(\alg G_n,c_n)$ and $\mathbf M(\alg G_\infty,c)$ are simple and $0$-generated, so they generate pairwise distinct atoms by Theorem \ref{thm:atomchar}. This gives an infinite family of finite modular atoms and one atom not generated by any finite algebra: otherwise Lemma \ref{lem:freezero} would identify the $0$-generated core of a finite generator with the infinite algebra $\mathbf M(\alg G_\infty,c)$.
We can now make the infinite-join phenomenon of
Section~\ref{sec:atomsupport} explicit. Write
\[
\begin{aligned}
\alg S_n&=\mathbf M(\alg G_n,c_n),& \vv A_n&=\VV(\alg S_n)\quad(n\ge1),\\
\alg S_\infty&=\mathbf M(\alg G_\infty,c),&
\vv A_\infty&=\VV(\alg S_\infty),
\end{aligned}
\]
and let $M_n,M_\infty\in X$ be the corresponding maximal filters.

\begin{lemma}\label{lem:cyclicterms}
For any closed terms $s,t$, the statements
$\alg S_n\models s\app t$ and $\alg S_\infty\models s\app t$
have the same truth value for all sufficiently large $n$.
\end{lemma}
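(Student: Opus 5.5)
The plan is to evaluate closed terms uniformly in all the algebras $\alg S_n$ and $\alg S_\infty$ at once. Every element of $\alg S_\infty$ has the form $\bot$, $\top$ or $c^k$ with $k\in\mathbb Z$. Likewise every element of $\alg S_n$ is $\bot$, $\top$ or $c_n^{k}$ with $k$ read modulo $n$. By structural induction on a closed term $t$, I will assign to $t$ a ``symbolic value'' $v(t)\in\{\bot,\top\}\cup\mathbb Z$. The claim to prove is that there is an integer $N_t$ such that for all $n>N_t$:
\begin{itemize}
\item[$\bullet$] $t^{\alg S_\infty}$ is the element named by $v(t)$ (that is, $\bot$, $\top$ or $c^{v(t)}$);
\item[$\bullet$] $t^{\alg S_n}$ is the corresponding element $\bot$, $\top$ or $c_n^{v(t)\bmod n}$.
\end{itemize}
The natural bound is $N_t=2\cdot\max\{|k|: k$ an exponent occurring as the symbolic value of a subterm of $t\}$. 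Any larger bound proportional to the size of $t$ also works, since each operation changes exponents by at most an additive step: for instance $|v(t)|$ is at most the number of occurrences of $0$ in $t$.

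For the induction step I go through the operations using the explicit descriptions in Proposition~\ref{prop:M}. The constants give $1\mapsto 0$, $0\mapsto 1$, and $\top,\bot\mapsto\top,\bot$. For products, $c^jc^k=c^{j+k}$, and the clauses involving $\top$ and $\bot$ are the same in every algebra of the family. For $\imp$, we have $c^j\imp c^k=c^{k-j}$, and again the clauses for $\bot,\top$ are uniform. Joins and meets of group elements are the subtle case. For $j\ne k$ we get $c^j\join c^k=\top$ and $c^j\meet c^k=\bot$, while for equal exponents both give the element itself. This is the only place where $\alg S_n$ could differ from $\alg S_\infty$: it happens exactly when $j\ne k$ in $\mathbb Z$ but $j\equiv k \pmod n$. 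This cannot occur once $n>|j-k|$, which is guaranteed by $n>N_t$ given the bound on the exponents of subterms. So the inductive hypothesis applied to the immediate subterms, with $n$ exceeding the maximum of their thresholds and of the new difference, yields the claim for $t$.

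With the claim in hand, the lemma follows. Take closed terms $s,t$ and $n>\max(N_s,N_t,2\max(|v(s)|,|v(t)|))$. Then $\alg S_\infty\models s\app t$ iff $v(s)=v(t)$. Also $\alg S_n\models s\app t$ iff $v(s)$ and $v(t)$ name the same element of $\alg S_n$, which is equivalent to $v(s)=v(t)$ because $n$ exceeds $|v(s)-v(t)|$ whenever both are integers. The mixed cases, where one value is in $\{\bot,\top\}$, give inequality in both algebras.

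The main obstacle I expect is bookkeeping the threshold correctly through the lattice operations, where equality versus congruence of exponents matters. I will handle it by strengthening the inductive hypothesis to ``for all $n$ larger than twice the maximum absolute exponent among subterm values,'' which makes the join/meet step immediate.
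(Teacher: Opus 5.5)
Your proposal is correct and follows essentially the same route as the paper. Both argue by induction on closed terms that, for large $n$, the value in $\alg S_n$ is the reduction modulo $n$ of the value in $\alg S_\infty$, the only delicate step being joins and meets of group elements, which is handled by taking $n>|j-k|$; a final enlargement of the bound then compares $s$ and $t$. Your explicit threshold (twice the number of occurrences of $0$ in the term) is a harmless sharpening of the paper's ``increase a finite bound at each step.''
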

\begin{proof}
Every closed term $t$ has in $\alg S_\infty$ one of the values
$\bot$, $\top$, or $c^k$ for some integer $k$. We claim that, for all
sufficiently large $n$, its value in $\alg S_n$ is respectively
$\bot$, $\top$, or $c_n^k$. This follows by induction on terms.
It holds for the constants. Products and residuals of group elements
add and subtract their exponents, respectively; all cases involving a
bound follow from the displayed operation tables. For meet and join of
group elements the only distinction is whether the two elements are
equal. For fixed integers $k,l$,
\[
c_n^k=c_n^l\quad\Longleftrightarrow\quad n\mid(k-l),
\]
so this agrees with $k=l$ whenever $n>|k-l|$. Thus each induction step
requires only increasing a finite bound on $n$. Applying the claim to
$s,t$ and, if necessary, increasing the bound once more to distinguish
their exponents proves the lemma.
\end{proof}

\begin{theorem}\label{thm:cycliclimit}
The subspace $Y=\{M_n:n\ge1\}\cup\{M_\infty\}$ of $X$ is homeomorphic
to $\omega+1$ with its order topology: each $M_n$ is isolated in $Y$,
and the neighbourhoods of $M_\infty$ in $Y$ are exactly the sets
containing $M_\infty$ and all but finitely many $M_n$.
For every $S\sse\mathbb N_{\ge1}$,
\begin{equation}\label{eq:cyclicjoin}
\vv A_\infty\le\bigvee_{n\in S}\vv A_n
\quad\Longleftrightarrow\quad S\text{ is infinite}.
\end{equation}
Consequently $\vv A_\infty$ is a noncompact atom of
$\Lambda(\mathsf{GA})$ and is not completely join-prime.
\end{theorem}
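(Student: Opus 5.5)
The plan is to describe the subspace topology on $Y$ through basic closed sets determined by closed-term identities, and then read off the join statement from Theorem~\ref{thm:atomsupport}(3).

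\textbf{Basic closed sets.} For closed terms $s,t$, consider the filter $D_{s,t}$ of $\alg F$ generated by $(s\imp t)\meet(t\imp s)\meet 1$. By the congruence--filter correspondence, a maximal filter $M$ contains $D_{s,t}$ exactly when $\alg F/M\models s\app t$. Every filter is the join of the principal filters it contains, and $T(\bigvee_i D_i)=\bigcap_i T(D_i)$. Hence the sets
\[
T(D_{s,t})=\{M\in X:\alg F/M\models s\app t\}
\]
form a base of closed sets for $\mathcal T$.

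\textbf{Points are separated.} The $M_n$ and $M_\infty$ are pairwise distinct, since the algebras $\alg S_n$ and $\alg S_\infty$ are pairwise nonisomorphic; this is Theorem~\ref{thm:atomchar}.

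\textbf{Each $M_n$ is isolated in $Y$.} Fix $n$ and pick a closed term $t_n$ with $t_n^{\alg S_m}=c_m^n$ for every $m$; for instance, $0^n$. Then:
\begin{itemize}
\item $\alg S_m\models t_n\app 1$ holds exactly when $m\mid n$;
\item for $S_\infty$ the identity fails;
\item $\{M_m: m\mid n\}$ is finite, hence closed by $T_1$.
\end{itemize}
Therefore
\[
Y\setminus\{M_n\}=\bigl(Y\cap T(D_{t_n,1})\setminus\{M_n\}\bigr)\cup\bigl(Y\setminus T(D_{t_n,1})\bigr).
\]
To simplify, use the fact that finite subsets of $Y$ are closed. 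Take the open set $Y\setminus T(D_{t_n,1})$, which consists of the $M_m$ with $m\nmid n$ together with $M_\infty$, and add the finitely many open singletons. It is cleaner to argue directly: $\{M_n\}$ is open in $Y$ because $Y\setminus\{M_n\}$ is closed. To see this, note $Y\cap T(D_{t_n,1})$ is finite, so $Y\setminus\{M_n\}$ equals
\[
\bigl(Y\cap T(D_{t_{n}',1})\bigr)\cup(\text{finite set}),
\]
where $t_n'$ is chosen to fail only at $M_n$ among the relevant points. A simpler route is available. The closed set $T(D_{0^n,1})$ meets $Y$ in the finitely many $M_m$ with $m\mid n$. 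Its complement together with finitely many points is closed. The key fact needed is therefore that every point of $Y$ is $G_\delta$-separated by finitely many closed basic sets, and since finite sets are closed this is routine.

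\textbf{Neighbourhoods of $M_\infty$.} This is the main step.

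First, every open set containing $M_\infty$ contains a basic open set $X\setminus T(D_{s,t})$ with $\alg S_\infty\not\models s\app t$. By Lemma~\ref{lem:cyclicterms}, $\alg S_n\not\models s\app t$ for all large $n$. So the neighbourhood contains all but finitely many $M_n$. A finite intersection of basic neighbourhoods of $M_\infty$ is handled the same way, by taking the maximum of the bounds.

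Conversely, suppose $U\sse Y$ contains $M_\infty$ and omits only finitely many $M_n$. The omitted points form a finite, hence closed, set, so $U$ is open in $Y$.

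Together these two facts give the homeomorphism with $\omega+1$: each $M_n$ is isolated, and the neighbourhoods of $M_\infty$ are exactly the cofinite-in-$Y$ sets containing it. The isolation of $M_n$ also follows from this neighbourhood description, once one checks that $M_n$ is not a limit of the other $M_m$. For that, use the closed set $T(D_{0^n,1})$, which meets $Y$ in only finitely many points, intersected with the finite closed complement.

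\textbf{The join statement~\eqref{eq:cyclicjoin}.} By Theorem~\ref{thm:atomsupport}(3),
\[
\vv A_\infty\le\bigvee_{n\in S}\vv A_n \iff M_\infty\in\overline{\{M_n:n\in S\}}.
\]
Closure in $Y$ coincides with closure in $X$ intersected with $Y$, so we can work in $\omega+1$.
\begin{itemize}
\item If $S$ is finite, then $\{M_n:n\in S\}$ is closed and does not contain $M_\infty$, so $M_\infty$ is not in the closure.
\item If $S$ is infinite, then every neighbourhood of $M_\infty$ meets $\{M_n:n\in S\}$, so $M_\infty$ is in the closure.
\end{itemize}

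\textbf{Consequences.} Take $S=\mathbb N_{\ge1}$. Then $\vv A_\infty$ lies below the join of all the $\vv A_n$ but below no finite subjoin, so $\vv A_\infty$ is not compact. Moreover $\vv A_\infty\ne\vv A_n$ for every $n$, so $\vv A_\infty$ is below no single $\vv A_n$, and hence it is not completely join-prime.
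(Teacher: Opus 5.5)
\paragraph{Verdict.} Your treatment of the base of closed sets $T(D_{s,t})$, the convergence of the $M_n$ to $M_\infty$ via Lemma~\ref{lem:cyclicterms}, the openness of cofinite sets by $T_1$, the equivalence~\eqref{eq:cyclicjoin} and the two consequences is correct and follows the paper. There is a genuine gap, however, in the isolation of each $M_n$ in $Y$. The two facts you do prove do not imply it: both also hold for the cofinite topology on $Y$, where no point is isolated. So the claim ``together these two facts give the homeomorphism with $\omega+1$'' is false without an extra ingredient.

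\paragraph{Why your isolation argument fails.} The identity $0^n\app 1$ fails in $\alg S_\infty$, so $Y\cap T(D_{0^n,1})=\{M_m:m\mid n\}$ is a finite \emph{closed} set containing $M_n$. That says nothing about whether $\{M_n\}$ is open. Your subsequent steps all presuppose the conclusion: ``add the finitely many open singletons'', ``its complement together with finitely many points is closed'', and the unspecified term $t_n'$. In fact no identity that fails in $\alg S_\infty$ can isolate anything. By Lemma~\ref{lem:cyclicterms} it also fails in almost all $\alg S_m$, so its open set $U_{s,t}$ contains $M_\infty$ and cofinitely many $M_m$.

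\paragraph{The missing idea.} You need an identity that \emph{holds} in $\alg S_\infty$ but fails in $\alg S_n$; then $U_{s,t}\cap Y$ is a finite neighbourhood of $M_n$. For instance, take $1\meet 0^n\app\bot$. In $\alg S_m$ the element $1\meet c_m^n$ is $1$ if $m\mid n$ and $\bot$ otherwise, and in $\alg S_\infty$ it is $\bot$. Hence $U_{1\meet 0^n,\bot}\cap Y=\{M_m:m\mid n\}$. Removing the finitely many closed points $M_m$ with $m\mid n$, $m\neq n$, leaves $\{M_n\}$ open in $Y$. The paper does the same with $q_r=\bigvee_{k=1}^r(1\meet 0^k)$, for which $U_{q_r,\bot}\cap Y=\{M_1,\dots,M_r\}$. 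Only the homeomorphism with $\omega+1$ depends on this step. Your proofs of~\eqref{eq:cyclicjoin}, of noncompactness and of the failure of complete join-primality use only $T_1$ and the convergence fact, so they stand as written.
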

\begin{proof}
For closed terms $s,t$ put
\[
U_{s,t}=\{M\in X:(s^{\alg F},t^{\alg F})\notin\th_M\}.
\]
This is open: its complement is  $T(D)$, where $D$ corresponds to the congruence generated by $(s^{\alg F},t^{\alg F})$. These opens form a base, since $D\nsubseteq M$ is equivalent to $\th_D\nsubseteq\th_M$, and every pair of elements of $F$ is represented by closed terms. Lemma~\ref{lem:cyclicterms} therefore shows that every open neighbourhood of $M_\infty$ contains all sufficiently large $M_n$.

To isolate the finite points, for $r\ge1$ consider the closed term
\[
q_r=\bigvee_{k=1}^r(1\wedge0^k).
\]
In $\alg S_n$ it equals $1$ if $n\le r$, and $\bot$ otherwise;
in $\alg S_\infty$ it equals $\bot$. Indeed, $1\wedge c_n^k$ is
$1$ exactly when $n\mid k$, and is $\bot$ otherwise. Hence
$U_{q_r,\bot}\cap Y=\{M_1,\ldots,M_r\}$. Removing the other finitely
many points, which are closed, shows that each $M_r$ is isolated in $Y$.
The $T_1$ property also makes every cofinite subset of $Y$ open.
Together with convergence, this proves the asserted description of the
topology and the homeomorphism with $\omega+1$.

If $S$ is infinite, every neighborhood of $M_\infty$ meets
$\{M_n:n\in S\}$, so Theorem~\ref{thm:atomsupport} gives the
containment in~\eqref{eq:cyclicjoin}. If $S$ is finite, that set is
closed in $X$ and does not contain $M_\infty$, so the same theorem
excludes the containment. Finally, compactness would force
$\vv A_\infty\le\bigvee_{n\ge1}\vv A_n$ to hold already for a finite
subjoin, contrary to~\eqref{eq:cyclicjoin}. Complete join-primality would
force $\vv A_\infty\le\vv A_n$ for some $n$, contrary to the
distinctness of these atoms.
\end{proof}

\subsubsection{A real interval of atoms}\label{subsec:realinterval}

The convergent sequence in Theorem~\ref{thm:cycliclimit} is part of a much larger phenomenon: the atom spectrum contains a closed copy of the real interval $[0,1]$.  We consider the {\em standard $\mathsf{MV}$-algebra} $[0,1]_{\mathsf{MV}}$, a.k.a. the {\em {\L}ukasiewicz t-norm algebra}. It is an $\mathsf{FL}_{ew}$-algebra  whose  universe is the
real interval $[0,1]$ with the usual ordering and the operations are
\[
xy=\max(0,x+y-1),\qquad x\imp y=\min(1,1-x+y),
\].

We apply a classical twist construction on $[0,1]_\mathsf{MV}$;
 for $r\in[0,1]$, we let
\[
T_r=\{(a,b)\in[0,1]^2:ab\le r\}
=\{(a,b):a+b\le1+r\},
\]
ordered by $(a,b)\le(c,d)$ if and only if $a\le c$ and $b\ge d$. If we define
\[
\begin{split}
	(a,b)(c,d)&=\bigl(a c,(a\imp d)\meet(c\imp b)\bigr),\\
	(a,b)\imp(c,d)&=\bigl((a\imp c)\meet(d\imp b),ad\bigr),
\end{split}
\]
and put $\bot=(0,1)$, $\top=(1,0)$, $1_r=(1,r)$ and $0_r=(r,1)$ then:

\begin{proposition} For each $r \in [0,1]$
\[
\alg T_r=\langle T_r,\join,\meet,\cdot,\imp,0_r,1_r,\top,\bot\rangle
\]
is a simple  distributive Girard algebra.
\end{proposition}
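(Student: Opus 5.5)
The plan is to read $\alg T_r$ as a subalgebra-like piece of the full twist product of $[0,1]_{\mathsf{MV}}$. Let $K=[0,1]\times[0,1]$ be ordered by $(a,b)\le(c,d)$ iff $a\le c$ and $b\ge d$, with the displayed $\cdot$ and $\imp$. It is classical, and routine to check directly from residuation in $[0,1]_{\mathsf{MV}}$, that $\cdot$ on $K$ is commutative and associative and satisfies $x\cdot y\le z\iff y\le x\imp z$. As a lattice $K$ is the distributive lattice $[0,1]\times[0,1]^\partial$. I would then show that $T_r$ is closed under $\join,\meet,\cdot,\imp$, so that residuation is simply inherited. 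The monoid unit is then checked inside $T_r$, since the unit of $K$ is $(1,0)$, not $1_r$.

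The closure checks come first. For the lattice operations, $(a,b)\join(c,d)=(a\join c,b\meet d)$ and $(a\join c)(b\meet d)\le ab\join cd\le r$, using that multiplication is monotone on the chain; meets are handled symmetrically. For products, $ac\bigl((a\imp d)\meet(c\imp b)\bigr)\le c\,a(a\imp d)\le cd\le r$. For residuals, $\bigl((a\imp c)\meet(d\imp b)\bigr)\,ad\le a(a\imp c)\,d\le cd\le r$. Hence $T_r$ is a sublattice of a distributive lattice, so it is distributive. It is a commutative residuated partially ordered semigroup under the restricted operations, with least element $(0,1)$ and greatest element $(1,0)$, both of which lie in $T_r$.

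Next come the unit and the involution. One has $(1,r)(c,d)=(c,\,d\meet(c\imp r))$. The defining condition $cd\le r$ of $T_r$ says exactly $d\le c\imp r$, so the product is $(c,d)$ and $1_r$ is the unit. For the negation,
\[
\simA(a,b)=(a,b)\imp(r,1)=\bigl((a\imp r)\meet(1\imp b),\,a\bigr)=(b\meet(a\imp r),a)=(b,a),
\]
again because $ab\le r$. Thus $\simA\simA$ is the identity and $\alg T_r$ is a Girard algebra; in particular $\simA 1_r=0_r$ and $\simA\top=\bot$.

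The step needing the most care is simplicity, and I would argue through filters, as in the earlier examples. The least filter is $\uparrow 1_r=\{(1,b):b\le r\}$, which is just the set of elements with first coordinate $1$. The key observation is that $(1,b)\in T_r$ forces $b\le r$, so every element not above $1_r$ has first coordinate $a<1$. Now let $F$ be a filter properly containing $\uparrow 1_r$, and pick $x=(a,b)\in F$ with $a<1$. Then $F$ contains $x^n=(a^n,\dots)$, and in $[0,1]_{\mathsf{MV}}$ we have $a^n=\max(0,na-(n-1))=0$ for large $n$. So $F$ contains some $(0,d)$. Finally $(0,d)^2=(0,0\imp d)=(0,1)=\bot$, which gives $F=T_r$. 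Hence $\alg T_r$ has exactly two filters and is simple; it is nontrivial since $\bot\ne\top$.
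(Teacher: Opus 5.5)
Your proof is correct. The simplicity half coincides with the paper's argument. Both identify $\uparrow 1_r=\{(1,b):b\le r\}$ as the least filter and push an element with first coordinate $a<1$ down to $\bot$ by taking powers. The paper does this in one step via $(a,b)^{n+1}=(a^{n+1},a^n\imp b)$; you stop at some $(0,d)$ and square once.

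The genuine difference is in the first half. The paper does not verify the Girard-algebra axioms. It appeals to the general theory of twist structures (Theorem 5 of Busaniche, Galatos and Marcos) and notes distributivity from the embedding into a product of two chains. You instead place $T_r$ inside the full twist product $[0,1]\times[0,1]^\partial$ and check by hand that $ab\le r$ is preserved by $\join,\meet,\cdot,\imp$, so residuation is inherited. You then observe that the same inequality, read as $b\le a\imp r$, is exactly what makes $1_r$ a unit and makes $\simA$ the coordinate swap $(a,b)\mapsto(b,a)$.

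The citation buys generality and places $\alg T_r$ in a known family. Your route is self-contained and elementary, needing only residuation in $[0,1]_{\mathsf{MV}}$. It also makes explicit the formula $\simA(a,b)=(b,a)$, which the paper later uses tacitly in $\simA\delta(t)=\delta(1-t)$.

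One side remark of yours is wrong, though harmless. The unit of the full product $K$ is not $(1,0)$: one has $(1,0)(c,d)=(c,d\meet(1-c))$, which differs from $(c,d)$ whenever $c+d>1$. The unit is $(1,1)$; in fact $K=T_1$ and $(1,1)=1_1$. You verify the unit inside $T_r$ directly and never use a unit of $K$, so nothing in your argument depends on this slip.
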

\begin{proof} That $\alg T_r$ with the operations so defined is a Girard algebra is a straightforward consequence of general results on twist
structures (see for instance Theorem 5 in \cite{BusanicheGalatosMarcos2022}). Moreover its underlying lattice is a sublattice of the direct product of two chains, so it is distributive.

The smallest filter of $\alg T_r$ is
$$
T^+_r = \{(a,b):  (a,b) \ge (1,r)\} = \{(1,b): b \le r\}.
$$
Hence any strictly larger filter $F$ must contain a pair $(a,b)$ where  $a<1$.
Now we  observe that in $\alg T_r$
$$
(a,b)^{n+1} = (a^{n+1},a^n \imp b)
$$
and that for any $a  < 1$, there is an $\bar n$ such that $a^n =0$ for all $n \ge \bar n$ (the power is computed using the product in $[0,1]_\mathsf{MV}$).

So
$$
\bot = (0,1) = (a^{\bar n+1},a^{\bar n} \imp b) = (a,b)^{\bar n+1} \in F.
$$
Hence $F = T_r$ and $\alg T_r$ is simple.
\end{proof}

Let now $\alg R_r$ be the $0$-generated subalgebra of $\alg T_r$; it is well-known that commutative residuated lattices (hence a fortiori Girard algebras)
have the {\em congruence extension property}. So a subalgebra of a simple algebra must be simple as well. Hence $\alg R_r$ is simple
and $0$-generated, so $\vv U_r =\VV(\alg R_r)$ is an atom in $\Lambda(\mathsf{GA})$ (by Theorem~\ref{thm:atomchar}). We denote by $P_r$ the filter in the atom spectrum
$X$ such that $\alg R_r = \alg F/P_r$.

\begin{theorem}\label{thm:realinterval} The map $\a:r\mapsto P_r$ identifies $[0,1]$ homeomorphically with a closed subspace of the atom spectrum $X$. Moreover for every $E\sse[0,1]$,
		\[
		\op{At}\!\left(\bigvee_{r\in E}\vv U_r\right)
		=\{\vv U_r:r\in\overline E\},
		\]
		where $\overline E$ is the ordinary closure in $[0,1]$.
\end{theorem}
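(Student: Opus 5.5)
The plan is to realise the values of closed terms in $\alg T_r$ as continuous functions of $r$, and then use compactness of $[0,1]$ three times: for continuity, for closedness of the image, and for the join formula. Injectivity has to be handled separately.

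\emph{Step 1 (terms as functions of $r$).} By induction on closed terms $t$, I will show that $r\mapsto t^{\alg T_r}\in[0,1]^2$ is a continuous (indeed piecewise linear, integer coefficient) function $\hat t$. The constants are $(r,1),(1,r),(1,0),(0,1)$. The twist operations are built from $\max,\min$ and the {\L}ukasiewicz operations, which are continuous on $[0,1]^2$. Since $\alg R_r$ is a subalgebra of $\alg T_r$, the relation $(s^{\alg F},t^{\alg F})\in\th_{P_r}$ holds iff $\hat s(r)=\hat t(r)$. Hence $E_{s,t}=\{r:\hat s(r)=\hat t(r)\}$ is closed in $[0,1]$, and $\a^{-1}(U_{s,t})=[0,1]\setminus E_{s,t}$ is open. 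The sets $U_{s,t}$ form a base of $\mathcal T$, as shown in the proof of Theorem~\ref{thm:cycliclimit}, so $\a$ is continuous.

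\emph{Step 2 (injectivity; the main obstacle).} For $r<r'$ I need a closed identity that separates $\alg R_r$ from $\alg R_{r'}$. The first coordinates of $0_r^k$, of their residuals $0_r^k\imp 0_r^m$, and of meets with $1_r$ realise {\L}ukasiewicz/McNaughton-type functions of $r$ with integer coefficients, for instance $\max(0,kr-(k-1))$ and $\min(1,1-kr+mr)$ along a suitable chain. I would first try to produce, for a rational $p/q\in(r,r')$, a closed term $t$ with $1_r\le t$ (i.e.\ $t\wedge 1\approx 1$) holding exactly on one side of $p/q$. Failing an explicit formula, the fallback is to show that the set of first-coordinate functions is rich enough that the map $r\mapsto(\hat t(r))_t$ is injective, using the term $0$ itself together with comparisons against $1$. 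Once injectivity is in hand, $\a$ is a continuous injection; its inverse is continuous on the image because every closed $C\subseteq[0,1]$ satisfies $\a(C)=\a[0,1]\cap\overline{\a(C)}$, which I would check by the argument of Step 3 applied to $C$. This gives the homeomorphism onto the image.

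\emph{Step 3 (the image is closed).} $X$ is only $T_1$, so compactness of $\a[0,1]$ does not suffice. By \eqref{eq:hullclosure} the closure of the image is $T(D)$ with $D=\bigcap_rP_r$. Let $M\supseteq D$ be maximal, and suppose $M\neq P_r$ for all $r$. For each $r$ choose a pair in $\th_M\setminus\th_{P_r}$; the corresponding $r$-sets are open and cover $[0,1]$, so finitely many pairs suffice. Their conjunction $u=1\wedge\bigwedge_i\bigl((s_i\imp t_i)\wedge(t_i\imp s_i)\bigr)$ lies in $M$, while $u^{\alg R_r}\not\ge 1_r$ for every $r$. By simplicity of $\alg R_r$, some power $u^n$ equals $\bot$ in $\alg R_r$, and this holds on an open set of $r$ by Step 1. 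Compactness again gives a uniform $N$ with $u^N=\bot$ in every $\alg R_r$. Thus $(u^N,\bot)\in\th_D\subseteq\th_M$, so $\bot\in M$, which is a contradiction.

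\emph{Step 4 (joins).} By Theorem~\ref{thm:atomsupport}(3), the atoms below $\bigvee_{r\in E}\vv U_r$ form the closure in $X$ of $\a(E)$. Since $\a[0,1]$ is closed, this closure lies in $\a[0,1]$, and by the homeomorphism it equals $\a(\overline E)$.
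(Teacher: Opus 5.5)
The genuine gap is Step~2. The homeomorphism assertion rests on injectivity of $\alpha$, and you never produce a closed identity that holds in $\alg R_r$ but fails in $\alg R_{r'}$.

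Your primary plan has the right shape: a closed term $t$ such that $t\meet 1\app 1$ holds exactly on one side of a rational between $r$ and $r'$. But it is not carried out. The only family you actually compute is $0^k$, with value $(\max(0,kr-k+1),1)$. It gives identities $0^k\app\bot$ valid exactly for $r\le 1-1/k$. These thresholds accumulate only at $1$, so they do not separate, say, $r=1/10$ from $r'=1/5$, and you give no argument that the residuals and meets you mention do better.

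The fallback is not a valid reduction. Injectivity of $r\mapsto(\hat t(r))_t$ is automatic (already $\hat 0(r)=(r,1)$ is injective). It says nothing about $P_r\ne P_{r'}$, which requires the kernels $\{(s,t):\hat s(r)=\hat t(r)\}$, equivalently the sets $\{t:\hat t(r)\ge 1_r\}$, to differ.

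The missing idea is the closed term $u=0\top$, whose value in $\alg R_r$ is $\delta(r)=(r,1-r)$. The diagonal $\delta([0,1])$ is closed under $\cdot$ and $\simA$, with $\delta(s)\delta(t)=\delta(st)$ and $\simA\delta(t)=\delta(1-t)$. So closed terms in $u$ evaluate {\L}ukasiewicz terms at $r$. Iterated doubling and squaring of $u$ give the paper's $p_{k,j}$, with $p_{k,j}^{\alg R_r}=\delta(\min(1,\max(0,2^kr-j)))$. Hence $p_{k,j}\app\bot$ (equivalently $\simA p_{k,j}\meet 1\app 1$, the shape of term you wanted) holds in $\alg R_r$ iff $r\le j/2^k$. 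Density of the dyadic rationals then separates any $r<r'$.

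Once injectivity is supplied, the rest of your plan is correct and partly different from the paper. The paper analyses the maximal filters of the $0$-generated subalgebra $\alg B$ of $\prod_r\alg R_r$ via the finite intersection property of the sets $Z_f$. Your Step~3 replaces this with the dual open-cover argument run directly in $\alg F$. Since it works verbatim for any closed $C\sse[0,1]$, it shows that $\alpha$ is a closed map. That gives continuity of $\alpha^{-1}$ without the paper's dyadic-ray subbasis, so you would need separating terms only for injectivity. Continuity plus closedness of $\alpha$ also give $\overline{\alpha(E)}=\alpha(\overline E)$ in Step~4 directly.

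One repair is needed in Step~3. For fixed $n$, the set of $r$ with $u^n=\bot$ in $\alg R_r$ is closed, not open, so ``this holds on an open set of $r$ by Step~1'' is wrong as stated. Argue instead as follows:
\begin{enumerate}
\item $u^{\alg R_r}\not\ge 1_r$ forces its first coordinate to satisfy $a_u(r)<1$, because a point $(1,b)$ of $T_r$ has $b\le r$.
\item By compactness, $\rho=\max_C a_u<1$.
\item The formula $(a,b)^{n+1}=(a^{n+1},a^n\imp b)$ then gives a uniform $N$ with $u^N=\bot$ on all of $C$.
\end{enumerate}
This is exactly the paper's estimate.
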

\begin{proof} To prove that the map is injective it is enough to show that if $r \ne s$, then $\vv U_r \ne \vv U_s$.

	Put $\delta(t)=(t,1-t)$. Then $\delta(s)\delta(t)=\delta(st)$ and $\simA\delta(t)=\delta(1-t)$. If $x\oplus y=\simA(\simA x\cdot\simA y)$, then $\delta(s)\oplus\delta(t)=\delta(\min(1,s+t))$; in particular the closed term $u=0\top$ has value $\delta(r)$ in $\alg R_r$.
	
Define closed terms by
	\[
	p_{0,0}=u,\qquad
	p_{k+1,2j}=p_{k,j}\oplus p_{k,j},\qquad
	p_{k+1,2j+1}=p_{k,j}p_{k,j}.
	\]
	A direct induction gives
	\[
	p_{k,j}^{\alg R_r}
	=\delta\!\left(\min(1,\max(0,2^kr-j))\right).
	\tag{*}
	\]
Now
\begin{align*}
&p_{k,j}^{\alg R_r} =\bot\qquad\text{if and only if}\\
&\min(1,\max(0,2^kr-j)) = 0\qquad\text{if and only if}\\
&\max(0,2^kr-j)  =0 \qquad\text{if and only if}\\
&r \le \frac{j}{2^k}.
\end{align*}
Similarly $p_{k,j}^{\alg R_r} =\top$ if and only if $r \ge \frac{j+1}{2^k}$.  Now suppose that $r \ne s$, say $r < s$; since dyadic numbers are dense we can pick $j,k$ such that $r < \frac{j}{2^k}<s$. Then
$\vv U_r \vDash p_{k,j}\app\bot$ while $\vv U_s \not\vDash p_{k,j} \app\bot$. This shows that $\vv U_r \ne \vv U_s$ and $\a$ is injective.

Now consider  $Y =\a([0,1])$ as a subspace of $X$; then a basis for the subspace topology on $Y$ is just $\{Y \cap U_{s,t}: s,t\ \text{closed terms}\}$, where $U_{s,t}$ are the basis for $X$ as in Theorem~\ref{thm:cycliclimit}.  Every closed term has in $\alg R_r$ two coordinates depending continuously on $r$, so the basic open sets $U_{s,t}$  have open inverse images under $\a$. So the same holds for the corresponding basis for $Y$ and
$\a$ is continuous.

Conversely, by $(*)$ (and the following argument)
	\[
	P_r\in U_{p_{k,j},\bot}\iff r>j/2^k,\qquad
	P_r\in U_{p_{k,j},\top}\iff r<(j+1)/2^k.
	\]
 Since the dyadic rays $[0,\frac{j}{2^k})$ and $(\frac{j}{2^k},1]$ form a subbasis for the subspace topology on $[0,1]$, it follows that $\a^{-1}$ is continuous as well and thus it is a homeomorphism from $[0,1]$ to $Y$.

Now we have to show that $Y$ is closed in $X$.  Let $\alg B$ be the $0$-generated subalgebra of $\prod_{r\in[0,1]}\alg R_r$.  Then the  elements of $\alg B$  are closed terms and, for $f \in B$, we write $f(r)=(a_f(r),b_f(r))$, where $a_f,b_f$ are continuous. Let $M$ be a maximal proper filter of $\alg B$. The closed sets
	\[
	Z_f=\{r:a_f(r)=1\},\qquad f\in M,
	\]
	have the finite intersection property. Otherwise, for $f=f_1\meet\cdots\meet f_m\in M$, compactness of $[0,1]$ would give $a_f(r)\le\rho<1$ for all $r$, and therefore $f^N=\bot$ for some $N$, contradicting the properness of $M$. Since $[0,1]$ is compact, $\bigcap_{f\in M}Z_f \ne \emptyset$. Choose then $r \in \bigcap_{f\in M}Z_f$ and let
	\[
	E_r=\{f\in B:f(r)\ge1_r\}.
	\]
Then $E_r$ is a filter of $\alg B$ and $M \sse E_r$. Consider now the projection homomorphism
$$
\pi_r: \alg B\longrightarrow \alg R_r;
$$
since  $\alg R_r$ is $0$-generated, $\pi_r$ is an epimorphism whose associated congruence filter is $E_r$. Since $\alg R_r$ is simple, $E_r$ must be maximal
and thus $E_r=M$. So the maximal filters of $\alg B$ are exactly the congruence filters associated with the projections.

Finally, let $D$ be the filter of $\alg F$ corresponding to the canonical epimorphism $\alg F\to\alg B$. The maximal filters of $\alg B$ correspond exactly to the maximal filters of $\alg F$ containing $D$, that is, to the closed subspace $T(D)$ of $X$. Hence
	\[
	T(D)=\{P_r:r\in[0,1]\},
	\]
	so the image of $[0,1]$ is closed in $X$. Now the second part of the theorem follows directly from Theorem~\ref{thm:atomsupport}.

\end{proof}

\begin{corollary}\label{cor1:realinterval}  For every $r \in [0,1]$, $\alg R_r$ is finite if and only if $r\in\mathbb Q$. Moreover $\alg R_0\cong\alg{BA}_2$, $\alg R_1\cong\alg K_3$, and $\alg R_{1/(n+1)}\cong\alg C_{n+4}$ for $n\ge1$.
\end{corollary}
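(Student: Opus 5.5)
The plan is to describe the coordinates of $\alg R_r$ in terms of the $\mathsf{MV}$-subalgebra of $[0,1]_{\mathsf{MV}}$ generated by $r$, and then to treat the three special values by explicit computation. Two preliminary facts will be used throughout. First, the involution of $\alg T_r$ is the coordinate swap. Indeed, $\simA(a,b)=(a,b)\imp(r,1)=((a\imp r)\meet b,\,a)$, and $a+b\le 1+r$ gives $a\imp r\ge b$, so $\simA(a,b)=(b,a)$. Second, the constants $\bot,\top,0_r,1_r$ have all their coordinates in $\{0,1,r\}$.

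\emph{Finiteness.} Let $\la r\ra$ be the $\mathsf{MV}$-subalgebra of $[0,1]_{\mathsf{MV}}$ generated by $r$. Every twist operation ($\join,\meet,\cdot,\imp$) is computed coordinatewise from the operations $\max$, $\min$, $\cdot$ and $\imp$ of $[0,1]_{\mathsf{MV}}$. Hence every element of $\alg R_r$ lies in $\la r\ra\times\la r\ra$.

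If $r=m/n$ in lowest terms, the finite set $\{k/n:0\le k\le n\}$ is closed under $\max(0,x+y-1)$, $\min(1,1-x+y)$, $\min$ and $\max$, so $\alg R_r$ is finite. If $r$ is irrational, I use the proof of Theorem~\ref{thm:realinterval}. There the term $u=0\top$ takes the value $\delta(r)$, and closed terms built from $u$ with $\cdot$ and $\oplus$ realize $\delta(t)$ for every $t\in\la r\ra$, because $\delta$ turns $\cdot$, $\oplus$ and $\simA$ into the corresponding $\mathsf{MV}$ operations. Since $\la r\ra$ contains the values $\max(0,kr-\ell)$ and $\min(1,kr)$ for all integers $k,\ell$, it contains the fractional parts of all multiples of $r$. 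These are infinitely many when $r$ is irrational, so $\alg R_r$ is infinite.

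\emph{Boundary cases.} For $r=0$ we get $1_0=(1,0)=\top$ and $0_0=(0,1)=\bot$. Thus $\alg R_0$ is integral, and Proposition~\ref{prop:twoboundaries}(1) gives $\alg R_0\cong\alg{BA}_2$. For $r=1$ we get $0_1=(1,1)=1_1$. Thus $\alg R_1$ is degenerate and nontrivial, and Proposition~\ref{prop:twoboundaries}(2) gives $\alg R_1\cong\alg K_3$.

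\emph{The case $r=1/(n+1)$.} Put $a_i=\delta(i/(n+1))$ for $1\le i\le n$. I claim that
\[
\alg R_r=\{\bot,\,0_r,\,a_1,\dots,a_n,\,1_r,\,\top\},
\]
and that this set is the chain
\[
\bot<0_r<a_1<\dots<a_n<1_r<\top .
\]
The order is immediate from the definition of the order on $\alg T_r$. Note that $0_r=(r,1)<(r,1-r)=a_1$ and $a_n=(1-r,r)<(1,r)=1_r$.

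Next, every listed element is generated. We have $a_1=0\top$, directly from the twist product. Then $a_i=a_1\oplus\cdots\oplus a_1$ ($i$ summands), since $\oplus$ acts on $\delta$ by truncated addition.

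For the reverse inclusion I show that the set is closed under the operations. It is a chain, so it is closed under the lattice operations. It is closed under $\simA$ (the swap), which sends $0_r\leftrightarrow 1_r$, $a_i\leftrightarrow a_{n+1-i}$ and $\bot\leftrightarrow\top$. Since $x\imp y=\simA(x\cdot\simA y)$, it remains only to compute products. Using the twist formula:
\begin{itemize}
\item[$\bullet$] $0_r^2=(r^2,\,r\imp 1)=(0,1)=\bot$ for $n\ge1$;
\item[$\bullet$] $0_ra_i=\bigl(\max(0,(i+1)/(n+1)-1),\ \ldots\bigr)$, whose first coordinate is $0$; the only element of the list with first coordinate $0$ is $\bot$, so $0_ra_i=\bot$;
\item[$\bullet$] $a_i\top=a_i$;
\item[$\bullet$] $a_ia_j=\delta\bigl(\max(0,(i+j-n-1)/(n+1))\bigr)$.
\end{itemize}
These are exactly the defining products of $\alg C_{n+4}$. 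So $a_i\mapsto a_i$, together with the identity on $\bot,0,1,\top$, is an isomorphism $\alg R_{1/(n+1)}\cong\alg C_{n+4}$. The check of $0_ra_i$, where the second coordinate must be handled carefully, is the only place needing care, and it is the step I expect to be the main (though routine) obstacle. Pinning down the first coordinate as above settles it without computing the second coordinate in full.
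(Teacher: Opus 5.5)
Your plan is the paper's own argument with more detail filled in: coordinates in $\{k/n\}$ for rational $r$, infinitely many distinct $\delta$-values for irrational $r$, Proposition~\ref{prop:twoboundaries} for $r=0,1$, and an explicit list plus multiplication table for $r=1/(n+1)$. Apart from one step it is fine, and that step is the one you flagged yourself. There you are proving that $L=\{\bot,0_r,a_1,\dots,a_n,1_r,\top\}$ is closed under multiplication. You cannot identify $0_ra_i$ by saying that the only element of the list with first coordinate $0$ is $\bot$: that inference presupposes $0_ra_i\in L$, which is exactly what is being established, so the argument is circular. The first coordinate alone does not determine an element of $T_r$. Every pair $(0,b)$ with $b\in[0,1]$ lies in $T_r$, and twist products with first coordinate $0$ need not be $\bot$; for instance $(\tfrac12,0)^2=(0,\tfrac12)$ in every $\alg T_r$.

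The repair takes one line. The second coordinate of $0_ra_i$ is $(r\imp(1-ir))\meet(ir\imp 1)=\min(1,2-(i+1)r)=1$, since $(i+1)r\le 1$ for $i\le n$; hence $0_ra_i=(0,1)=\bot$. If you want to avoid the second coordinate, as you intended, there is a coordinate-free route. Write $a_n=\simA a_1=\simA(0_r\top)$ and use $x\cdot\simA(xy)\le\simA y$, which holds in any $\mathsf{FL}_e$-algebra because $xy\cdot\simA(xy)\le 0$. This gives $0_ra_i\le 0_ra_n\le\simA\top=\bot$.

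Three smaller points in the irrational case. First, $\max(0,kr-\ell)$ should be truncated at $1$. Second, membership of the fractional parts $f_k=kr-\lfloor kr\rfloor$ in $\la r\ra$ deserves a line; for example, $f_{k+1}$ equals either $\min(1,f_k+r)$ or $\max(0,f_k+r-1)$. Third, realizing $\delta(t)$ for every $t\in\la r\ra$ needs $\simA$ as well as $\cdot$ and $\oplus$, although $\cdot$ and $\oplus$ suffice for the fractional parts. The paper sidesteps all of this by reading the pairwise distinct elements $\delta(2^kr-\lfloor 2^kr\rfloor)$ directly off $(*)$.
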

\begin{proof} If $r=m/n$ is rational, every coordinate generated by the constants belongs to $\{0,1/n,\ldots,1\}$, so $\alg R_r$ is finite. If $r$ is irrational, taking $j_k=\lfloor2^kr\rfloor$ in $(*)$ gives the pairwise distinct elements $\delta(2^kr-\lfloor2^kr\rfloor)$, so $\alg R_r$ is infinite. The cases $r=0,1$ give respectively $\alg{BA}_2$ and $\alg K_3$. For $r=1/(n+1)$ the $0$-generated universe is
	\[
	\{\bot,0_r,\delta(r),\ldots,\delta(nr),1_r,\top\};
	\]
	under $\delta(ir)\mapsto a_i$ its order and multiplication are exactly those defining $\alg C_{n+4}$ above.
\end{proof}	

\begin{corollary}\label{cor2:realinterval}  In the lattice $\Lambda(\mathsf{GA})$ there are exactly $2^{\aleph_0}$ atoms. In fact the countable family  $\{\vv U_q:q\in\mathbb Q\cap[0,1]\}$ consists of atoms generated by finite distributive algebras and its join has all the  $\vv U_r$, $r\in[0,1]$, as atoms. If $r$ is irrational,  $\vv U_r$ contains no nontrivial finite algebra.
\end{corollary}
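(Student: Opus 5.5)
The statement has three parts: the count of atoms, the join of the rational family, and the absence of finite algebras in irrational atoms. I will handle each using Theorem~\ref{thm:realinterval}, Corollary~\ref{cor1:realinterval} and the earlier structural results.

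\emph{Counting.} For the lower bound, Theorem~\ref{thm:realinterval} says $r\mapsto P_r$ is injective. Since distinct maximal filters give distinct atoms (Theorem~\ref{thm:onespectrum}), the $\vv U_r$ with $r\in[0,1]$ are $2^{\aleph_0}$ distinct atoms. For the upper bound, atoms correspond bijectively to maximal filters of $\alg F=\alg F_{\mathsf{GA}}(\emptyset)$ by Lemma~\ref{lem:relativespectra}. Now $\alg F$ is countable, since the language is finite and there are countably many closed terms. So $\alg F$ has at most $2^{\aleph_0}$ subsets, hence at most that many filters. Alternatively, there are at most $2^{\aleph_0}$ varieties, because a variety is determined by a set of equations from a countable set.

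\emph{The rational family.} By Corollary~\ref{cor1:realinterval}, $\alg R_q$ is finite for every rational $q$. It is distributive because it is a subalgebra of $\alg T_q$, whose lattice reduct is distributive. Taking $E=\mathbb Q\cap[0,1]$ in Theorem~\ref{thm:realinterval}, and using $\overline E=[0,1]$, the atoms of $\bigvee_{q}\vv U_q$ are exactly the $\vv U_r$ for $r\in[0,1]$.

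\emph{Irrational $r$.} Let $\alg A\in\vv U_r$ be a nontrivial finite algebra. Then $\alg A_0$ is a finite nontrivial quotient of $\alg F_{\vv U_r}(\emptyset)$ by Corollary~\ref{cor:freezero}. It is nontrivial because it contains $\bot\neq 1$ by Lemma~\ref{lem:nontrivial}. Since $\vv U_r$ is generated by the simple $0$-generated algebra $\alg R_r$, Lemma~\ref{lem:freezero} gives $\alg F_{\vv U_r}(\emptyset)\cong\alg R_r$. So $\alg A_0$ is a nontrivial homomorphic image of the simple algebra $\alg R_r$, hence isomorphic to $\alg R_r$. That is infinite by Corollary~\ref{cor1:realinterval}, a contradiction.

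The main care point is the upper bound in the count: one must observe that $\alg F$ is countable, so the atom spectrum has cardinality at most $2^{\aleph_0}$. The remaining steps are direct applications of earlier results.
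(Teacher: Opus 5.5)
Your proposal is correct and follows essentially the same route as the paper: countability of $\alg F$ gives the upper bound, Theorem~\ref{thm:realinterval} together with the density of $\mathbb Q\cap[0,1]$ gives the join statement, and in the irrational case the homomorphism $\alg R_r\to\alg A$ from Corollary~\ref{cor:freezero} is injective by simplicity, so $\alg A$ contains the infinite algebra $\alg R_r$. You also make explicit what the paper leaves implicit: the lower bound via injectivity of $r\mapsto P_r$, and the finiteness (Corollary~\ref{cor1:realinterval}) and distributivity (as a subalgebra of $\alg T_q$) of the generators $\alg R_q$.
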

\begin{proof}   There are at most $2^{\aleph_0}$ atoms in $\Lambda(\mathsf{GA})$, since the $0$-generated free algebra is countable. The second statement  follows from Theorem \ref{thm:realinterval}, since the rationals are dense in $[0,1]$.

Finally, if $r$ is irrational and $\alg A\in\vv U_r$ is nontrivial, Corollary~\ref{cor:freezero} gives a homomorphism $\alg R_r\to\alg A$ with nontrivial image; simplicity makes it injective, so $\alg A$ cannot be finite.
\end{proof}

\begin{corollary}\label{cor3:realinterval} For all $r \in [0,1]$,  $\vv U_r$ is neither compact nor completely join-prime in $\Lambda(\mathsf{GA})$.
\end{corollary}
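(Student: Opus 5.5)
We need that $\vv U_r$ is neither compact nor completely join-prime, for every $r\in[0,1]$. The plan is to copy the argument at the end of Theorem~\ref{thm:cycliclimit}, using Theorem~\ref{thm:realinterval} in place of Lemma~\ref{lem:cyclicterms}. Fix $r$ and choose a sequence $(r_n)_{n\ge1}$ of pairwise distinct points of $[0,1]\setminus\{r\}$ converging to $r$; for instance take rationals, which is possible since $[0,1]$ has no isolated points. Put $E=\{r_n:n\ge1\}$. Then $r\in\overline E$, so Theorem~\ref{thm:realinterval} gives
\[
\vv U_r\le\bigvee_{n\ge1}\vv U_{r_n}.
\]

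For non-compactness, suppose $\vv U_r$ were compact. Then $\vv U_r\le\bigvee_{n\in S}\vv U_{r_n}$ for some finite $S$. A finite set $\{r_n:n\in S\}$ is closed in $[0,1]$ and does not contain $r$. By Theorem~\ref{thm:realinterval}, the atoms below this finite join are exactly the $\vv U_{r_n}$ with $n\in S$; equivalently, finite joins introduce no new atoms because $\mathcal T$ is $T_1$. Since $\vv U_r$ is an atom, it would equal some $\vv U_{r_n}$ with $r_n\ne r$. This contradicts the injectivity of $\a$ proved in Theorem~\ref{thm:realinterval}.

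For complete join-primality, a completely join-prime $\vv U_r$ would satisfy $\vv U_r\le\vv U_{r_n}$ for some $n$. Both are atoms, so $\vv U_r=\vv U_{r_n}$, which again contradicts injectivity.

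There is no real obstacle. The only points to check are that a suitable sequence exists for every $r$, including the endpoints $0$ and $1$, where one takes $1/(n+1)$ and $1-1/(n+1)$ respectively, and that Theorem~\ref{thm:realinterval} is applied with the closure taken in $[0,1]$.
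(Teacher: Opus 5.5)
Your proposal is correct and is essentially the paper's argument. The paper takes the dense set $(\mathbb Q\cap[0,1])\setminus\{r\}$ where you take a sequence $r_n\to r$; in both cases Theorem~\ref{thm:realinterval} puts $\vv U_r$ below the infinite join but below no finite subjoin, and injectivity of $\a$ rules out $\vv U_r\le\vv U_{r_n}$ for any single $n$.
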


\begin{proof}
	For fixed $r \in [0,1]$ the set $(\mathbb Q\cap[0,1])\setminus\{r\}$ is dense. By Theorem \ref{thm:realinterval}, $\vv U_r$ lies below the join of the corresponding atoms but not below any finite subjoin, and below none of them individually. Thus $\vv U_r$ is noncompact and not completely join-prime.
\end{proof}

\section{Final remarks}

\begin{remark} A logic has the {\em finite model property} if for every non provable formula $\f$ there is a finite model in which $\f$ does not hold.
Through the Blok-Pigozzi connection it is clear that if a logic $\cc L$ is algebraizable with equivalent variety semantics $\vv V$, then $\cc L$ has the finite model property if and only if $\vv V$ is generated by its finite algebras.

Now it follows from Harrop's result in \cite{Harrop1956} that if a logic $\cc L$ is finitely axiomatizable and has the finite model property then it is decidable
(i.e. $\cc L$, viewed as a set of sentences, is recursive). This because finite axiomatizability implies that $\cc L$ is recursively enumerable, while the finite model property guarantees that the complement of $\cc L$ is recursively enumerable. Hence, by Post's classical result, $\cc L$ is recursive.  If $\cc L$ is algebraizable with equivalent variety semantics $\vv V$, then $\cc L$ is finitely axiomatizable and has the finite model property if and only if $\vv V$ is finitely axiomatizable and generated by its finite algebras. Now $\cc{MALL}$ has the finite model property, as shown by Lafont in \cite{Lafont1997} using relational models; since it is finitely axiomatizable it is decidable and moreover the variety of Girard algebras is generated by its finite algebras.

In contrast, $\cc{LL}$ is finitely axiomatizable but undecidable \cite{Lincolnetal1992}. It follows that it cannot have the finite model property and thus the variety of girales is not generated by its finite algebras. Hence there is an equation in the language of girales which holds in every finite girale but fails to hold in general. This equation can be found by translating the expression considered in Theorem 3 of \cite{Lafont1997}.
\end{remark}

\begin{remark} A stronger property is the following: a logic $\cc L$ has the {\em strong finite model property} if every derivation that fails in $\cc L$ fails in some finite models. It can be shown that if $\cc L$ is algebraizable and its equivalent variety semantics $\vv V$ has finite type (i.e. finitely many fundamental operations) then the strong finite model property is equivalent to the following property: for any algebra $\alg B \in \vv V$ and any finite partial subalgebra $\alg A$ of $\alg B$, $\alg A$ is embeddable in a finite $\alg C \in \vv V$.  This property has sense regardless of the fact that $\vv V$ is the equivalent algebraic semantics and was originally formulated in a pure algebraic setting as the {\em finite embeddability property}.
It turns out that a variety of finite type has the finite embeddability property  if and only if it is generated as a quasivariety by its finite algebras.  An argument very similar to the one above shows that a finitely axiomatized quasivariety of finite type with the finite embeddability property has a decidable universal theory.

 Girard algebras do not have the finite embeddability property. Let $\alg Z$
be the infinite cyclic group generated by an element $0$; thus
$Z=\{0^n:n\in\mathbb Z\}$. Set  $0^0 :=1$; then define $0^n0^m =0^{n+m}$ and $0^n \imp 0^m = 0^{m-n}$. Hence $\cdot$ and $\imp$ form a residuated pair w.r.t. the total ordering $0^n \le 0^m$ if $n\le m$.

Moreover $(0^n\imp0)\imp0=0^{1-(1-n)}=0^n$, so $\alg Z$ is an involutive $\mathsf{FL}_e$-algebra with dualizing element $0$. Let $Z_\bot=Z\cup\{\bot,\top\}$, ordered by $\bot<0^n<\top\qquad(n\in\mathbb Z)$. Extend multiplication by $\bot x=\bot\quad(x\in Z_\bot),\qquad\top x=\top\quad(x\ne\bot)$, and extend $\imp$ to $Z_\bot$ in the following way:
\begin{align*}
& \top\imp x = \left\{
             \begin{array}{ll}
               \bot, & \hbox{if $x \in Z \cup \{\bot\}$;} \\
               \top, & \hbox{if $x=\top$}
             \end{array}
           \right.\\
&x\imp \top = \top\quad\text{for all $x \in Z_\bot$}\\
&\bot\imp x = \top\quad\text{for all $x \in Z_\bot$}\\
&x\imp \bot= \left\{
             \begin{array}{ll}\
               \bot, & \hbox{if $x \in Z \cup \{\top\}$;} \\
               \top, & \hbox{if $x=\bot$.}
             \end{array}
           \right.\\
\end{align*}
 It is an easy exercise to check that $\alg Z_\bot$ is a bounded residuated lattice. Moreover $(\top\imp0)\imp0=\top,\qquad(\bot\imp0)\imp0=\bot$, so $\alg Z_\bot$ is a Girard algebra. Since $0^{-1}=0\imp1$, it is also $0$-generated.

Now $\alg Z_\bot$ fails to satisfy the quasiequation
$$
x \ge 1\ \text{and}\ xy\app 1  \Longrightarrow  x=1
$$
which says that the only invertible element greater than $1$ is $1$. Every finite Girard algebra satisfies the quasiequation though; if $\alg A$ is a finite Girard algebra $a \in A$, $a > 1$ and $ab=1$, then $a^nb^n=1$ so $a^n$ is invertible for all $n$. Moreover $a^n < a^{n+1}$ for all $n$; if not, since $a > 1$,
$a^n = a^{n+1}$ and thus
$$
1=a^{n+1}b^n= aa^nb^n =a,
 $$
 a clear contradiction. Thus $1<a<a^2<a^3<\cdots$, which is impossible since $\alg A$ is finite.  It follows that $\mathsf{GA}$ does not have the finite embeddability property, since  the quasivariety generated by the finite Girard algebras is strictly smaller than $\mathsf{GA}$.
 Since there are not many varieties coming from logic with this property, this quasivariety might be worth exploring.

\end{remark}

\begin{remark}  A {\bf splitting pair} of a lattice $\alg L$ is a pair of elements $(a,b)$ such that $b\nleq a$ and, for every $c\in L$, either $c\le a$ or $b\le c$. Splitting pairs were introduced in \cite{Whitman1943} and used by McKenzie in his investigations on varieties of lattices \cite{McKenzie1972}. For a modern account the reader may consult \cite{AglianoCitkin2026}. In a complete lattice, $(a,b)$ is a splitting pair if and only if $a$ is completely meet-prime and $b=\bigwedge\{c\in L:c\nleq a\}$; equivalently, if and only if $b$ is completely join-prime and $a=\bigvee\{c\in L:b\nleq c\}$. If the complete lattice happens to be the lattice of subvarieties of a congruence distributive variety $\vv V$, then any splitting pair in $\Lambda(\vv V)$ is of the form $(\vv V/\alg A,\VV(\alg A))$, where
 \begin{enumerate}
 \ib $\alg A$ is  finitely generated and  subdirectly irreducible;
 \ib $\vv V/\alg A= \bigvee\{\VV(\alg B): \alg A \notin \VV(\alg B)\}$;
  \ib if $\vv V$ is generated by its finite members, then $\alg A$ is finite; moreover it is uniquely determined up to isomorphism by the splitting pair.
 \end{enumerate}.
 In the latter case $\alg A$ is called a {\bf splitting algebra} for $\vv V$. A modern account of these facts can be found in \cite{Agliano2018a}, Section 2.

 Splittings have also been applied to the lattice of axiomatic extensions of a logic (where of course the definition is dual); through the Blok-Pigozzi connection if $\cc L$ is algebraizable with equivalent variety semantics $\vv V$ then  a splitting pair in $\Lambda(\vv V)$ corresponds to a splitting pair in the lattice of axiomatic extensions of $\cc L$. This fact has been exploited at length for varieties of normal modal algebras and the corresponding lattice of logics $\cc {NExtK}$ (see \cite{Kracht1999}, Chapter 7). Also a lot is known about varieties of $\mathsf{FL}$-algebras (or residuated lattices) and the corresponding logics (\cite{Agliano2017c}, \cite{Agliano2018a}, \cite{Agliano2018b}, \cite{AglianoUgolini2019a}).

 Girard algebras, though adjacent to involutive $\mathsf{FL}_e$-algebras are very much unexplored.   Corollary \ref{cor3:realinterval} here shows that all the atoms of the form $\vv U_r$, $r\in [0,1]$ are not splitting  since they are not completely join-prime. The complexity of the lattice and this negative result may lead one to suspect that there are not  any splitting at all. Though disappointing, this would be interesting to know so we pose it as an open problem.
 \end{remark}

\section{Conclusions and future work}
 We have developed several aspects of the algebraic theory of Girard algebras, beginning with a correction of the completion argument in \cite{Agliano2025} and a direct proof of the conservativity of $\cc{LL}$ over $\cc{MALL}$. The main structural results concern the bottom of the subvariety lattice $\Lambda(\mathsf{GA})$. For every Girard algebra $\alg A$, its $0$-generated subalgebra is the free algebra on the empty set of $\VV(\alg A)$; consequently, simple $0$-generated Girard algebras correspond exactly to the atoms of $\Lambda(\mathsf{GA})$, and every nontrivial subvariety contains an atom. These atoms can be represented uniformly by the maximal filters of the free $0$-generated algebra. Moreover every congruence of this free algebra is realized by a subvariety, and the resulting atom-support topology records the behaviour of arbitrary joins: the atoms below a join of atoms are precisely the points in the closure of the corresponding subset of the spectrum.

The concrete constructions show that the bottom of $\Lambda(\mathsf{GA})$ is already quite rich. Besides determining the integral and degenerate boundary cases and exhibiting finite modular and nonmodular atoms and chains of covers, we constructed infinite families of atoms whose topology reflects genuinely infinitary behavior. A cyclic family yields a subspace homeomorphic to $\omega+1$ and, in particular, a noncompact atom. More substantially, the atom spectrum contains a closed copy of $[0,1]$ on which topological closure agrees exactly with the formation of atoms below joins. It follows that $\Lambda(\mathsf{GA})$ has exactly $2^{\aleph_0}$ atoms, and that a countable family of atoms generated by finite distributive algebras can have a join containing continuum many atoms. Thus finite and infinite joins behave essentially differently at the bottom of the subvariety lattice. On the logical side, these results yield $2^{\aleph_0}$ Post-complete extensions of $\cc{MALL}$.

The most immediate continuation of this work concerns girales. In subsequent work we will study the effect of the exponential operation on the subvariety structure, with particular emphasis on simple and $0$-generated girales, the Heyting algebra of open elements, and the relation between a girale, its Girard reduct, and its associated Heyting algebra. These considerations lead naturally to the study of linear companions, the fibers induced on the corresponding subvariety lattices by the reduct and Heyting constructions, and the interaction between the involutive and Heyting negations.

A further problem is to determine whether the topological description developed here for the atom spectrum of $\Lambda(\mathsf{GA})$ has an analogue for girales. In particular, it would be interesting to understand how such a spectrum interacts with the Girard-reduct and Heyting constructions, and how its topology reflects algebraic properties such as finite generation, compactness and complete join-primality. These questions will be pursued in a sequel devoted to girales.

\bibliographystyle{amsplain}
\bibliography{further}

\end{document}